\newtheorem{thm}{Theorem}[section]
\newtheorem{defn}[thm]{Definition}
\newtheorem{prop}[thm]{Proposition}
\newtheorem{rmk}[thm]{Remark}
\newtheorem{cor}[thm]{Corollary}
\newtheorem{lem}[thm]{Lemma}
\newtheorem{asum}[thm]{Assumption}
\newcommand{\pd}{\mathrm{pardeg}}
\newcommand{\pas}{\mathrm{par}\mu}
\newcommand{\rk}{\mathrm{rank}}
\newcommand{\h}{\mathfrak{Hom}}
\newcommand{\e}{\mathrm{Ext}}
\newcommand{\Ph}{\mathrm{ParHom}}
\newcommand{\ho}{\mathrm{Hom}}
\newcommand{\Sph}{\mathrm{SParHom}}
\newcommand{\ph}{\mathfrak{ParHom}}
\newcommand{\sph}{\mathfrak{SParHom}}
\newcommand{\pc}{\mathrm{PChain}}
\newcommand{\pb}{\mathrm{PBundle}}
\title{On the motives of moduli of parabolic chains and parabolic Higgs bundles}
\author{Viet Cuong Do}
\begin{document}
\maketitle
\begin{abstract}
Like the Higgs bundles on a Riemann surface who played an important role in the study of representation of the fundamental group of the surface, the parabolic Higgs bundles play also their importance in the study of the fundamental group  but of the punctured surface. In this paper, we shall calculate the (virtual) motive (i.e in a suitable Grothendieck group) of the moduli spaces of parabolic bundles of fixed rank and fixed parabolic structure, using localization with respect to the circle action.  	
\end{abstract}

Let $C$ be a smooth curve of genius $g$ with $k$ marked points in the reduced divisor $D=p_1+\dots+ p_k$ and $E$ be a (holomorphic) bundle over $C$. Let $\Omega_C$ be the canonical bundle of $C$. A parabolic bundle $E$ is a bundle over $C$ equipped with weighted flags in the fibres of $E$ over the marked points. We can define for these parabolic bundles a notion of (semi-)stability which keeps track the weights. 

A parabolic Higgs bundle is a pair $(E,\Phi)$, where $E$ is a parabolic bundle and $\Phi: E\to E\otimes \Omega_C(D)$ is a strongly parabolic homomorphism, i.e the residue of $\Phi$ at $p_i\in D$ is strict triangular  with respect to the flag at this point. Naturally, the (semi-)stability above provides  a notion of (semi-)stability of the parabolic Higgs bundles. This notion allows  the construction of moduli spaces of semistable parabolic Higgs bundles of fixed rank , fixed degree and fixed weights.  For generic weights, semi-stability and stability coincide and the moduli space is a smooth quasi-projective algebraic manifold.

To calculate the motives of this moduli space we shall use the strategy of \cite{ghs} and the ``wall-cross" processing of \cite{gh} for bypassing the convergence problem which appear in \cite{ghs}. Let us now briefly review the structure of the paper and explain more precisely our strategy.

In the section 1, we recall the definition of the ring which we shall use to calculate our subjects. We end the section by recall the relation between the mixed Hodge polynomial and our formulas.

In the section 2, we collect some known results on parabolic Higgs bundles and in particular explain how the class of the moduli space of stable parabolic Higgs bundles is calculated from the classes of moduli spaces of parabolic chains - it's a point depart of our strategy. Here a parabolic chain is simply a collection $(E_0,\dots E_r)$ of parabolic vector bundles together with strongly parabolic maps $\phi_i:E_i\to E_{i-1}(D)$.

In the section 3, we do some study on the parabolic chains. The necessary conditions for the existence of semi-stable parabolic chains are in the section 3.2. The recursive formulas for Harder-Narasimhan stratum are given in the section 3.4. We give a calculation for the class of parabolic chains in a special cases in the section 3.5

The ``wall-crossing" process is explained in the section 4. Here we explain how the classical approach to study geometry of moduli spaces by variant the semi-stability in the point of view of stacks - which is easier in some sense. In particularly, we have an inductive expression for the difference between the moduli spaces of two different semi-stability. More precisely, this difference is the union of finite HN-stratum of smaller rank which is calculated in the section 3. 

The section 5 is glueing all together. Here we describe our algorithms to calculate the moduli spaces of stable parabolic chains. To make use of the section 4, we must introduce an extremal semi-stability, for which we know explicitly the moduli spaces of stable parabolic chains. Now is the time for playing the role of the section 3.2 and the calculation in special cases. 

Finally, the author believe that by varying the parabolic structure, we can extend the result of \cite{ghs} which is only for the rank and the degree are co-prime to the cases where they aren't co-prime any more from our result.

\textbf{Acknowledgment: }The author would like to thank J. Heinloth for introducing this problem and sharing his insights.


\section{Preliminaries}
Like the case non-parabolic, we shall do our computation in the ring $\hat{K_0}(\mathrm{Var})$. For the convenience, we shall recall its definition. 

We denote $K_0(\mathrm{Var}_k)$ the Grothendieck ring of varieties over $k$. We denote also $\mathbb{L}:=[\mathbb{A}^1]\in K_0(\mathrm{Var}_k)$. In $K_0(\mathrm{Var}_k)[\mathbb{L}^{-1}]$ we have the filtration defined by the subgroups generated by classes $[X]\mathbb{L}^{-m}$ with $\dim(X)-m\leq -n$ for  $n\in \mathbb{N}$ fixed. The completion of $K_0(\mathrm{Var}_k)$ according to this filtration is $\hat{K_0}(\mathrm{Var})$.

We note that $[GL_n]$ is an invertible element in $\hat{K_0}(\mathrm{Var})$.

To see the relation between our formulas of classes in $\hat{K_0}(\mathrm{Var})$ and the mixed Hodge polynomials we need to recall that (see \cite[Section 1.2]{ghs} for more detail)
\begin{prop}
We can easily read off the mixed Hodge polynomial from the classes in $\hat{K_0}(\mathrm{Var})$ if they can be expressed in terms of $\mathbb{L}$ and the symmetric power $C^{(i)}$ of the curve $C$. 
\end{prop}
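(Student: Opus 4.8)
The plan is to realise the mixed Hodge polynomial as the image of the class under one explicit, computable specialisation of $\hat{K_0}(\mathrm{Var})$, and then to check that on the subring generated by $\mathbb{L}$ and the $[C^{(i)}]$ this specialisation reproduces the genuine mixed Hodge polynomial. First I would recall that the Hodge--Deligne (E-)polynomial $E(\,\cdot\,;u,v)$ is a ring homomorphism on $K_0(\mathrm{Var}_k)$, sending $[X]$ to $\sum_{p,q}\bigl(\sum_n (-1)^n h^{p,q}(H^n_c(X))\bigr)u^pv^q$. Since $E(\mathbb{L})=uv$ is invertible and the dimension filtration defining $\hat{K_0}(\mathrm{Var})$ is carried to the filtration by total degree in $(u,v)$, this homomorphism extends continuously to $\hat{K_0}(\mathrm{Var})$, with values in a suitable completion of $\mathbb{Z}[u,v][(uv)^{-1}]$.

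The second step is to observe that all the generators are of pure, Tate-compatible type, so that for each of them the cohomological degree is read off from the Hodge weight. Indeed $\mathbb{L}$ is pure of type $(1,1)$ sitting in degree $2$, so its mixed Hodge polynomial is $xyt^2$ while $E(\mathbb{L})=uv$; and each symmetric power $C^{(i)}$ is smooth and projective, hence has pure Hodge structure, so that $h^{p,q}(H^n(C^{(i)}))=0$ unless $n=p+q$. For such classes the substitution $u\mapsto -xt,\ v\mapsto -yt$ converts $E$ into the mixed Hodge polynomial $H(\,\cdot\,;x,y,t)=\sum_{p,q,n}h^{p,q}(H^n)\,x^py^qt^n$, because $(-xt)^p(-yt)^q=(-1)^{p+q}x^py^qt^{p+q}$ cancels exactly the sign $(-1)^{p+q}=(-1)^n$ forced by purity; one checks on $C$ itself that $E(C)=1-gu-gv+uv$ becomes $1+g(x+y)t+xyt^2$. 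I would then record the generating functions of the generators explicitly, namely $E(\mathbb{L})=uv$ and, by Macdonald's formula, $\sum_{i\ge 0} E(C^{(i)};u,v)\,z^i=\dfrac{(1-uz)^g(1-vz)^g}{(1-z)(1-uvz)}$, which after the substitution above simultaneously yields all the mixed Hodge polynomials $H(C^{(i)};x,y,t)$.

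The third step promotes this from generators to arbitrary polynomial expressions. The substitution $u\mapsto -xt,\ v\mapsto -yt$ is itself a continuous ring homomorphism, so composing it with $E$ gives a continuous ring homomorphism $\Phi$ defined on all of $\hat{K_0}(\mathrm{Var})$. On the other hand the genuine mixed Hodge polynomial is multiplicative on products of the generators (Künneth together with the compatibility of Hodge types under tensor product) and, crucially, the identity ``weight $=$ degree'' is stable under tensor products of pure structures. Hence $\Phi$ and the genuine mixed Hodge polynomial are two continuous, additive, multiplicative assignments that agree on the generators $\mathbb{L}$ and the $[C^{(i)}]$, so they agree on every class lying in the subring these generate. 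Reading off the mixed Hodge polynomial of such a class therefore reduces to writing it as a polynomial in $\mathbb{L}$ and the $[C^{(i)}]$, applying $E$ with the values above, and performing the substitution $u\mapsto -xt,\ v\mapsto -yt$.

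The step I expect to be the main obstacle is justifying that passing through the E-polynomial loses no information, i.e. that the substitution genuinely recovers the mixed Hodge polynomial rather than a mere Euler-characteristic shadow. This is precisely where the purity of every building block (weight equal to cohomological degree) is indispensable: for a general class in $\hat{K_0}(\mathrm{Var})$ the mixed Hodge polynomial is not even well defined, since the scissor relations mix cohomological degrees, so the hypothesis that the class is expressible through $\mathbb{L}$ and $C^{(i)}$ is what makes the statement true. A secondary technical point is convergence, namely that the infinite expressions permitted in the completion map to well-defined elements of $\mathbb{Z}[x,y]((t))$; this follows from the compatibility of the dimension filtration with the degree in the variable $t$.
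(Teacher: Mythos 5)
Your first two steps are sound and agree with the content of the reference the paper points to (the paper gives no proof of this proposition, it simply cites \cite[Section 1.2]{ghs}): the $E$-polynomial is a ring homomorphism on $K_0(\mathrm{Var})$ sending $\mathbb{L}$ to $uv$, it extends continuously to $\hat{K_0}(\mathrm{Var})$, Macdonald's formula gives $E(C^{(i)};u,v)$, and for classes of varieties with \emph{pure} cohomology the substitution $u\mapsto -xt$, $v\mapsto -yt$ turns $E$ into the mixed Hodge polynomial. The genuine gap is in your third step. The mixed Hodge polynomial is not an additive assignment on $K_0(\mathrm{Var})$: it is a function of the variety, not of its class, because the scissor relations do not respect cohomological degree (you say this yourself in your final paragraph). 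Consequently the argument ``two continuous, additive, multiplicative assignments agreeing on the generators agree on the subring they generate'' cannot even be formulated: only $\Phi=(\text{substitution})\circ E$ is defined on classes, and there is no second assignment to compare it with. Concretely, $[\mathbb{G}_m]=\mathbb{L}-1$ lies in the subring generated by $\mathbb{L}$, yet $\Phi(\mathbb{L}-1)=xyt^2-1$, whereas the compactly supported mixed Hodge polynomial of $\mathbb{G}_m$ is $t+xyt^2$. So your closing claim that ``the hypothesis that the class is expressible through $\mathbb{L}$ and $C^{(i)}$ is what makes the statement true'' is exactly the false step: expressibility of the class implies nothing about purity of the cohomology of a variety representing it.

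What is missing is a purity statement for the specific varieties whose classes are being computed, and this is a geometric input that cannot be recovered from the class. This is how \cite[Section 1.2]{ghs} proceeds, and the present paper supplies the needed geometry in Proposition \ref{p1} and Corollary \ref{corm}: by the Bialynicki-Birula decomposition, the moduli space $M^d_{n,\mathcal{D}}$ is a union of Zariski-locally trivial affine-space bundles $F_i^+$ over the fixed components $F_i$, which are smooth and proper (being closed subschemes of the fiber $f^{-1}(0)$ of the proper Hitchin map). Hence its compactly supported cohomology is a direct sum of Tate twists of cohomologies of smooth projective varieties and is therefore pure, and only after this observation does the substitution $u\mapsto -xt$, $v\mapsto -yt$ applied to the $E$-polynomial return the honest mixed Hodge polynomial. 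Your proof becomes correct if step three is replaced by: (a) $\Phi$ computes the $E$-polynomial of any class in $\hat{K_0}(\mathrm{Var})$; (b) the moduli spaces under consideration have pure compactly supported cohomology by the Bialynicki-Birula decomposition; (c) for a variety with pure cohomology, the $E$-polynomial determines the mixed Hodge polynomial via the substitution, exactly as you verified on the generators.
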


\section{Parabolic Higgs bundles}
In this section we will collect the basic definitions on moduli space of parabolic Higgs bundles.

Let $C$ be a smooth curve of genius $g$ with $k$ marked points in the reduced divisor $D=p_1+\dots+ p_k$ and $E$ be a (holomorphic) bundle over $C$.
\begin{defn}\label{def1}\begin{itemize}
\item A \underline{parabolic} structure on $E$ consists of weighted flags:
$$\begin{matrix}
E_{p}&=&E_{p,1}&\supset&\dots&\supset&E_{p,s_p}&\supset&E_{p,(s_p+1)}=0\\
0&\leq&w_{p,1}&<&\dots&<&w_{p,m_p}&<&w_{p,s_p+1}=1
\end{matrix}$$
over each point $p\in D$.
\item A (holomorphic) map $\phi:E^1\to E^2$ between two parabolic bundles is called
\begin{itemize}
\item \underline{parabolic} if $w_{p,i}^1>w^2_{p,i'}$ implies that $\phi(E^1_{p,i})\subset E^2_{p,i'+1}\,\forall p\in D$,
\item
\underline{strongly parabolic} if $w_{p,i}^1\geq w^2_{p,i'}$  implies that $\phi(E^1_{p,i})\subset E^2_{p,i'+1}\,\forall p\in D$.
\end{itemize}
\end{itemize}
\end{defn}
If $\underline{s}=(s_{p_1},\dots, s_{p_k})$ and $\underline{w}=(w_{p,i}) $ for $p \in D$ and $1\leq i\leq s_p$, we say that the parabolic bundle $E$ has \textit{weight type} $(\underline{s},\underline{w})$. If $m_{p,i}=\dim(E_{p,i})-\dim(E_{p,i+1})$, for  $p\in D$ and $1\leq i\leq s_p$, we say that $E$ has \textit{weight mutiplicity }$\underline{m}=(m_{p,i}).$ We have $\sum_{i=1}^{s_p}m_{p,i}=\rk(E)\,\forall p\in D$.

An ordinary vector bundle $E$ is a parabolic bundle of weight type \linebreak $((1,\dots,1),(0,\dots,0))$ and of dimension vector $(\rk(E),\dots,\rk(E))$.

If $F$ is a subbundle of $E$ then $F$ inherits a parabolic structure from $E$ by pullback. More precisely, a parabolic structure on $F$ can be given by intersecting the flags with the fibres $F_{p}$, and discarding any subspace $E_{p,i}\cap F_{p}$ which coincides with $E_{p,i+1}\cap F_{p}$ and weight are assigned accordingly. Similarly, the quotient $E/F$ can be given a parabolic structure by projecting the flags to $E_{p}/F_{p}$. The weights of $E/F$ are precisely those discarded for $F$.
\begin{rmk}\label{rmkw}
For our inductive calculations, sometimes we need that a sub-quotient and a sub-bundle of a fixed parabolic bundle inherit the weight type of this fixed parabolic bundle, so we shall not discard the coincident subspaces of the flags. This convention doesn't give any confuse with the definition above (see. \cite[Remark 2.11]{yh}).     
\end{rmk}

We denote the set of homomorphisms (resp. parabolic homomorphisms and strongly parabolic homomorphisms) from $E$ to $F$ by $\ho(E,F)$ (resp. $\Ph(E,F)$ and $\Sph(E,F)$). We denote by $\ph(E,E')$ and $\sph(E,E')$ the sub-sheaves of $\h(E,E')$ formed by the parabolic and strongly parabolic homomorphisms from $E$ to $E'$, respectively.

For two parabolic bundles $E,F$, there is a well-defined notion of tensor product $E\otimes^P F$  \cite{y1}, which is best understood in terms of $\mathbb{R}$-filtered sheaves.  If one of the two parabolic bundles is an ordinary vector bundle- for example $F$, we have that $E\otimes^P F$ is the parabolic bundle $E\otimes F$. This parabolic bundle inherit  a weight type and a dimension vector from parabolic bundle $E$. So in this case, we shall remove the superscript $p$ from the notation.

Let $\Omega_C$ denote the sheaf of differentials on $C$. 
\begin{defn}
A \underline{parabolic Higgs bundle} is a pair $(E,\phi)$ consisting of a parabolic bundle $E$ and a strongly parabolic map $ \phi:E\to E\otimes \Omega_C(D)$.
\end{defn}

Let $E$ be a parabolic bundle over $C$ has a weight type $(\underline{s},\underline{w})$ and weight multiplicity $\underline{m}$. We denote :
\begin{itemize}
\item $\pd(E):=\deg(E)+\sum_{p\in D}\sum_{i=1}^{s_p}m_{p,i}w_{p,i}$ the parabolic degree of $E$,

\item $\pas(E):=\frac{\pd(E)}{\rk(E)}$ the parabolic slope of $E$.
\end{itemize}

\begin{defn}\begin{itemize}
\item We call the parabolic bundle $E$ stable (resp. semi-stable) if , for every proper subbundle $F$ of $E$, we have $\pas(F)<\pas(E)$ (resp. $\pas(F)\leq\pas(E)$).
\item We call a parabolic Higgs bundle $(E,\phi)$ stable (semi-stable) if the above inequalities hold on those proper subbundles $F$ of $E$, which are, in addition, $\phi$-invariant (i.e $\phi(F)\subset F\otimes \Omega_C(D)$).
\end{itemize}
\begin{rmk}
\begin{itemize}
\item $\pas(E\otimes^P F)=\pas(E)+\pas(F)$.
\item The dual parabolic bundle of $E$ is $E^*=\h(E,\mathcal{O}_C(-D))$ where the filtration at each point $p\in D$ is
    $$E^*_{p}=E^*_{p,1}\supset\dots \supset E^*_{p,s_p}\supset 0,$$
    with $E^*_{p,i}=\h(E_{p}/E_{p,s_p+2-i}, \mathcal{O}(-D)_{p})$ and the weights are $1-w_{p,s_p}<\dots<1-w_{p,1}$.
\item $E^{**}=E$ and $\pd(E^*)=-\pd(E)$.
\end{itemize}
\end{rmk}
\end{defn}
Likewise the non-parabolic case, any non-zero parabolic bundle admits the parabolic Harder-Narasimhan. More precisely, we have the following :
\begin{prop}
Any non-zero parabolic bundle $E$ admits a unique filtration by sub-bundles
$$0=G_0\varsubsetneq G_1 \varsubsetneq \dots \varsubsetneq G_h=E$$
satisfying
\begin{itemize}
\item $G_i/G_{i-1}$ is parabolic semi-stable for $i=1,\dots, h$,
\item $\pas(G_i/G_{i-1})>\pas(G_{i+1}/G_i)$ for $i=1,\dots,h-1$.
\end{itemize}
Equivalently,
\begin{itemize}
\item $G_i/G_{i-1}$ is parabolic semi-stable for $i=1,\dots, h$,
\item For any parabolic sub-bundle $E'$ of $E$ containing $G_{i-1}$ we have \linebreak $\pas(G_i/G_{i-1})>\pas(E'/G_{i-1})$, for $i=1,\dots,h$.
\end{itemize}
\end{prop}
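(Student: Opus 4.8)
The plan is to reproduce the classical Harder--Narasimhan argument, the only new ingredient being a careful bookkeeping of the parabolic weight contributions. The starting point is the additivity of parabolic degree: for any subbundle $F \subset E$ with the induced parabolic structures, the short exact sequence $0 \to F \to E \to E/F \to 0$ gives $\pd(E) = \pd(F) + \pd(E/F)$, since $\deg$ is additive and the weights of $E/F$ are exactly those discarded for $F$ (as recorded after Definition~\ref{def1}). Combined with $\rk(E) = \rk(F) + \rk(E/F)$ this yields the see-saw inequalities: $\pas(F) \leq \pas(E)$ if and only if $\pas(E) \leq \pas(E/F)$, and likewise with the inequalities reversed or made strict. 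These inequalities drive every slope comparison below.

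First I would establish the existence of a maximal destabilizing subbundle. I would show that the parabolic slopes $\pas(F)$, as $F$ ranges over nonzero subbundles of $E$, form a set that is bounded above and discrete. Boundedness above follows because, for each fixed rank $r$, the integer $\deg(F)$ is bounded above over subbundles $F$ of rank $r$ (the usual argument for bundles on a curve) and there are only finitely many ranks, while the weight contribution $\sum_{p,i} m_{p,i}^F w_{p,i}^F$ lies between $0$ and a fixed constant depending only on the weight type of $E$. Discreteness follows because $\pd(F)$ lies in $\mathbb{Z}$ shifted by one of finitely many sums of the fixed weights of $E$, and $\rk(F) \in \{1, \dots, \rk(E)\}$. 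Hence $\mu_{\max} := \sup_F \pas(F)$ is attained; among all subbundles attaining $\mu_{\max}$ I would take one, call it $G_1$, of maximal rank. Then $G_1$ is parabolic semistable by maximality of $\mu_{\max}$, and I would prove it is unique and contains every subbundle of slope $\mu_{\max}$ by the standard sum argument: for two such subbundles $F_1,F_2$, the saturation of $F_1 + F_2$ has parabolic slope $\geq \mu_{\max}$ (using additivity and the fact that saturating only raises the parabolic degree), hence $=\mu_{\max}$, so by the maximal-rank choice it equals $G_1$.

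With $G_1$ in hand I would proceed by induction on the rank. The quotient $E/G_1$ has strictly smaller rank and inherits a parabolic structure, so by the inductive hypothesis it carries a unique parabolic HN filtration; pulling it back along $E \to E/G_1$ produces $G_1 \varsubsetneq G_2 \varsubsetneq \dots \varsubsetneq G_h = E$ with parabolic semistable quotients. The slope-drop $\pas(G_1) > \pas(G_2/G_1)$ follows from the maximality of $G_1$: if one had $\pas(G_2/G_1) \geq \pas(G_1) = \mu_{\max}$, the see-saw inequality applied to $0 \to G_1 \to G_2 \to G_2/G_1 \to 0$ would force $\pas(G_2) \geq \mu_{\max}$ with $\rk(G_2) > \rk(G_1)$, contradicting the choice of $G_1$. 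Uniqueness of the whole filtration reduces, by induction, to the uniqueness of $G_1$ proved above.

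Finally I would check the stated equivalence of the two descriptions. The second set of bullets says precisely that $G_i/G_{i-1}$ is the maximal destabilizing subbundle of $E/G_{i-1}$ for each $i$; translating this through the first step of the construction, it is equivalent to the conjunction of semistability of the quotients $G_i/G_{i-1}$ and the strict slope-drop, again via the see-saw inequalities. The main obstacle I anticipate is the maximal-destabilizing step: one must verify that the parabolic slope genuinely attains its supremum and that the sum-and-saturation operation does not decrease the parabolic slope, which is exactly where the parabolic weights must be tracked with care rather than treated formally as in the vector-bundle case.
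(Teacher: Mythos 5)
The paper states this proposition without proof, presenting it as the standard parabolic analogue of the classical Harder--Narasimhan theorem, and your argument is exactly that standard construction, correctly adapted: the set of slopes $\pas(F)$ is bounded above and discrete (since $\pd(F)$ lies in $\mathbb{Z}$ shifted by one of finitely many weight sums), the maximal destabilizing subbundle is taken of maximal rank among those attaining $\mu_{\max}$, uniqueness follows from the sum-and-saturation argument, and the filtration is built by induction on rank with the slope-drop forced by see-saw. The two points you flag as the genuine parabolic subtleties --- attainment of the supremum and the fact that saturation does not decrease parabolic degree --- are indeed where care is needed, and both are handled correctly (the latter is cleanest in the $\mathbb{R}$-filtered sheaf formalism alluded to in the paper, where $\pd$ is computed from the degrees of the intersections $F\cap E_\alpha$ and monotonicity under saturation is immediate).
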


And, the duality plays also an important role for parabolic Higgs bundles. The following results are consequences of theory developed by Yokogawa (cf. \cite[section 3.1 and proposition 3.7]{y1})
\begin{prop}\label{p5}
Let $E$ and $E'$ parabolic bundles. The sheaves $\ph(E',E)$ and $\sph(E,E'(D))$ are naturally dual.
\end{prop}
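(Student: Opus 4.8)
The statement asserts a perfect duality between two locally free $\mathcal{O}_C$-modules, which I read as a natural isomorphism $\sph(E,E'(D))\cong \ph(E',E)^{\vee}$ (sheaf dual over $\mathcal{O}_C$), since away from $D$ the twist by $\mathcal{O}(D)$ is trivial and one recovers the ordinary identification $\h(E,E')\cong\h(E',E)^{\vee}$. Because $E,E'$ are locally free and the parabolic and strongly parabolic conditions are $\mathcal{O}_C$-linear flag conditions on the local sections of $\h(E',E)$, both $\ph(E',E)$ and $\sph(E,E'(D))$ are locally free; hence it suffices to produce a natural $\mathcal{O}_C$-bilinear pairing and check that it is nondegenerate fibrewise at every point of $C$.

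The plan is to take the composition--trace pairing. Given local sections $f\in\ph(E',E)$ and $g\in\sph(E,E'(D))$, the composite $g\circ f\colon E'\to E'(D)$ is a section of $\h(E',E')(D)$, and I set $\langle f,g\rangle:=\mathrm{tr}(g\circ f)$, \emph{a priori} a section of $\mathcal{O}(D)$. The first thing to verify is that this trace is actually regular, i.e.\ lies in $\mathcal{O}_C$: near a marked point $p$ the field $g$ has at worst a simple pole whose residue is strictly triangular with respect to the flag on $E'_{p}$ (this is exactly the strongly parabolic condition read off in a frame adapted to the flag), while $f$ respects the flag up to the parabolic shift. A short computation in such a frame shows the residue of $g\circ f$ is nilpotent, hence traceless, so $\langle f,g\rangle$ has no pole along $D$. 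This yields the desired pairing $\ph(E',E)\otimes\sph(E,E'(D))\to\mathcal{O}_C$, manifestly functorial in $E$ and $E'$.

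It remains to prove perfectness. Away from $D$ the pairing is the usual trace pairing between $\h(E',E)$ and $\h(E,E')$, which is perfect, so the real content is local at the points of $D$. Here I would pass to Yokogawa's $\mathbb{R}$-filtered description, writing $E'_{\bullet},E_{\bullet}$ for the weight filtrations with the periodicity $E_{\alpha+1}=E_\alpha(-D)$; then $\ph(E',E)$ consists of the $f$ with $f(E'_\alpha)\subset E_\alpha$ (a ``$\ge 0$'' shift) and $\sph$ is the strict ``$>0$'' variant. The key claim is that, under the trace form, the annihilator of the ``$\ge 0$'' condition is precisely the ``$>0$'' condition twisted by $\mathcal{O}(D)$. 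This is a finite-dimensional linear-algebra check at each $p$: using the explicit dual filtration on $E'^{*}$ with weights $1-w_{p,s_p+1-i}$ from the Remark together with the $+1$ periodicity, one sees that the strict and non-strict weight inequalities of Definition~\ref{def1} are exactly transposes of one another. I expect this matching of strict versus non-strict inequalities across the $\mathcal{O}(D)$ twist to be the main obstacle; once it is in place, fibrewise nondegeneracy follows and the pairing induces the natural isomorphism $\sph(E,E'(D))\cong\ph(E',E)^{\vee}$. Combined with Serre duality on $C$ this recovers the cohomological statement $H^{i}(\ph(E',E))\cong H^{1-i}(\sph(E,E'\otimes\Omega_C(D)))^{*}$ used later, but only the sheaf-level duality is needed for the proposition itself.
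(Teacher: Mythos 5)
Your argument is correct, but it is not the paper's argument: the paper in fact gives no proof of Proposition \ref{p5} at all, deducing it as a ``consequence of theory developed by Yokogawa'' (\cite[section 3.1 and proposition 3.7]{y1}), i.e.\ from the formalism of $\mathbb{R}$-filtered sheaves, whereas you construct the duality directly. What each route buys: your trace pairing $\langle f,g\rangle=\mathrm{tr}(g\circ f)$, with regularity along $D$ coming from nilpotency of the residue and perfectness checked fibrewise, is elementary and self-contained at the level of bundles on $C$; Yokogawa's machinery is heavier but also covers parabolic sheaves and yields the full Ext/Serre duality package that the paper needs anyway (it is what feeds into the hypercohomology duality (\ref{eqn2}) and Lemma \ref{lem1}). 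One reassurance on the step you flag as the ``main obstacle'': it does go through, and it is exactly the finite check you describe. Near $p$ choose local splittings of $E$ and $E'$ into line bundles adapted to the flags, with weights $w_i$ and $w'_j$. Then both sheaves decompose into rank-one blocks indexed by pairs $(i,j)$: the $(i,j)$ entry of a parabolic $f:E'\to E$ is forced to vanish at $p$ precisely when $w_i<w'_j$, while the $(j,i)$ entry of a strongly parabolic $g:E\to E'(D)$, which is a priori allowed a simple pole at $p$ by the twist, is forced to be regular precisely when $w_i\geq w'_j$. Since exactly one of $w_i<w'_j$ and $w_i\geq w'_j$ holds, each matching pair of blocks carries total vanishing order one, exactly cancelling the pole permitted by $\mathcal{O}(D)$; moreover $\mathrm{tr}(g\circ f)=\sum_{i,j}g_{ji}f_{ij}$ pairs a block only with its transpose block, so in these frames the pairing matrix is a permutation matrix, hence unimodular, giving perfectness at every point of $D$ (off $D$ it is the usual trace pairing). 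Together with the observation that both sheaves are locally free of the same rank $\mathrm{rank}(E)\mathrm{rank}(E')$ (subsheaves of locally free sheaves on a smooth curve are locally free), this completes your argument, so the only thing missing from your write-up is carrying out this computation rather than any new idea.
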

Let us fix the weight type $(\underline{s},\underline{w})$ and the weight multiplicity $\underline{m}$. We shall denote by $\mathcal{M}^{d}_{n,\mathcal{D}}$ the moduli stack of parabolic Higgs bundles of rank $n$, degree $d$ and of data $\mathcal{D}:=(\underline{s},\underline{w},\underline{m})$ on $C$ (the rank and the data must be compatible, i.e $\sum_{i=1}^{s_p}m_{p,i}=n\,\forall p\in D$).
As stability and semi-stability are open conditions, so stable parabolic Higgs bundles (resp. semi-stable parabolic Higgs bundles) define an open sub-stack $\mathcal{M}_{n,\mathcal{D}}^{d,\mathrm{st}}\subset(\text{resp. }\mathcal{M}_{n,\mathcal{D}}^{d,\mathrm{ss}})\subset \mathcal{M}_n^{d,\mathcal{D}}$.

We shall say that the datum $\mathcal{D}$ are $\textit{generic}$ when every semi-stable parabolic Higgs bundle is automatically stable. For the fixed generic data, the coarse moduli space of $\mathcal{M}_{n,\mathcal{D}}^{d,\mathrm{ss}}$ which we shall denote by $M_{n,\mathcal{D}}^{d}$ was constructed using Gauge Theory by Yokogawa (cf. \cite{y},\cite{y1}), who also showed that it is a smooth irreducible complex variety of dimension
$$2(g-1)n^2+2+\sum_{p\in D}\left(n^2-\sum_{i=1}^{s_p}m_{p,i}^2\right).$$

\begin{rmk}
If the degree $d$ and rank $n$ are coprime and all the weights are assumed to be very small ($w_{p,i}<\frac{1}{n^2.|D|}$ for example) then, each semi-stable parabolic Higgs bundle is actually stable parabolic.  
\end{rmk}

The following result is one of the criteria to test our computation of motive.
\begin{prop}\label{p3}(cf.\cite[proposition 2.1]{ggm}) Fix the rank $n$. For different choices of degree and generic datum, the $E$-polynomial of the moduli space $M_{n,\mathcal{D}}^{d}$ is the same.
\end{prop}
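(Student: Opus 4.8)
The plan is to prove that the $E$-polynomial of $M^{d}_{n,\mathcal{D}}$ depends only on $n$ by exploiting that the $E$-polynomial is motivic: it is additive over locally closed stratifications and multiplicative over Zariski-locally-trivial fibrations. I would split the statement into two independent reductions. First, that for a fixed degree $d$ the $E$-polynomial is independent of the generic datum $\mathcal{D}$. Second, that for a fixed rank $n$ one can pass between any two degrees $d$ and $d'$ by an isomorphism of moduli spaces, after suitably adjusting the datum. Combining the two reductions then kills all dependence except on $n$.

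For the degree, the key tool is the parabolic tensor product $\otimes^{P}$. Tensoring $(E,\phi)$ by an ordinary line bundle of degree $e$ leaves the weight type, the dimension vector and the weights unchanged, shifts every parabolic slope by $e$, and hence gives an isomorphism $M^{d}_{n,\mathcal{D}}\cong M^{d+ne}_{n,\mathcal{D}}$; this already settles all degrees in a fixed residue class modulo $n$. To cross between residue classes I would tensor instead by a parabolic line bundle $F$ whose underlying bundle is trivial but which carries a small weight $\beta_{p}$ at a single marked point. Since $\pas(E\otimes^{P}F)=\pas(E)+\pas(F)$ and since tensoring by $\Omega_{C}(D)$ commutes with $\otimes^{P}F$, the operation $(E,\phi)\mapsto(E\otimes^{P}F,\phi\otimes\mathrm{id})$ preserves strong parabolicity and (semi)stability; and when $\beta_{p}$ is chosen so that exactly one step of the flag at $p$ crosses the value $1$, the normalization forces an elementary modification that raises the underlying integral degree by one at the cost of a new datum $\mathcal{D}'$. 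Together with the line-bundle twist this makes every degree accessible up to a change of $\mathcal{D}$.

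It then remains to remove the dependence on $\mathcal{D}$, which I would do by a wall-crossing argument in the space of weights. I connect two generic data by a generic path; it meets finitely many walls, across each of which the moduli spaces $M^{+}$ and $M^{-}$ agree on the open locus of objects stable for both chambers, so by additivity $E(M^{+})-E(M^{-})=E(S^{+})-E(S^{-})$, where $S^{\pm}$ are the flip loci of Higgs bundles stable on one side and destabilized on the other. Each $S^{\pm}$ fibers over a product of moduli spaces of parabolic Higgs bundles of strictly smaller rank, with fibre the projectivization of a space of Higgs-compatible extensions. The decisive point is that the two extension spaces have equal dimension: this is exactly where the Higgs structure is needed, since the deformation complex of a parabolic Higgs bundle is self-dual via the pairing between $\ph$ and $\sph$ of Proposition \ref{p5}, which symmetrizes the roles of sub and quotient across the wall. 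Hence $E(S^{+})=E(S^{-})$ and $E(M^{+})=E(M^{-})$.

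I expect the main obstacle to be precisely this equidimensionality of the flip loci. For ordinary parabolic bundles the analogous wall-crossing does change the $E$-polynomial, so the argument must identify the normal directions of the flip with the relevant hypercohomology of the parabolic Higgs deformation complex and then invoke parabolic Serre duality to match them; this requires careful bookkeeping of the parabolic and strongly parabolic $\mathrm{Hom}$-sheaves at the marked points and of the twist by $\Omega_{C}(D)$. A secondary, more technical point is checking that the fibrations defining $S^{\pm}$ are Zariski-locally trivial, or at least have fibrewise-constant $E$-polynomial, so that multiplicativity legitimately applies to the additivity bookkeeping.
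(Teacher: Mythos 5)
Your proposal is correct and takes essentially the same route as the paper: the paper likewise splits the statement into weight-independence at fixed degree (which it gets by citing Thaddeus's flip analysis \cite{t} via \cite[Proposition 2.1]{ggm} together with multiplicativity of the $E$-polynomial on projective bundles, exactly the wall-crossing-with-equidimensional-flip-loci argument you sketch by hand using Serre duality for the Higgs deformation complex) and degree-changing via tensoring with a suitably chosen parabolic line bundle and adjusted generic weights. The only difference is one of detail: where you re-derive the equality of the flip loci's $E$-polynomials, the paper simply invokes Thaddeus's result.
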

\begin{proof}
For fixed degree, it is a consequence of the results of Thaddeus \cite{t}(see. \cite[proposition 2.1]{ggm}  for brief explaining) and the property of $E$-polynomial that the $E$-polynomial of a projective bundle splits as the product of the $E$-polynomial of the base and the $E$-polynomial of projective space.

This result will be extended to moduli space with different degrees, by tensoring with a fixed parabolic line bundle $L$. By choosing a suitable parabolic line bundle and (generic) weight data, we obtain the proof. 
\end{proof}
The moduli space $M_{n,\mathcal{D}}^{d}$ has an action of $\mathbb{G}_m$, given by multiplication of scalars on the Higgs field $\phi$, i.e $\lambda.(E,\phi)=(E,\lambda\phi)$.
The fixed point set of the action of $\mathbb{G}_m$ is identified by Simpson and is analogous to what happens for non-parabolic Higgs bundles.
\begin{prop}[\cite{s}, theorem 8]\label{p2}
The equivalence class of a stable parabolic Higgs bundles $(E,\phi)$ is fixed under the action of $\mathbb{G}_m$ if and only if $E$ has a direct sum decomposition $E=\bigoplus_{i=0}^rE_i$ as parabolic bundles, such that the restriction $\phi_i:=\phi|_{E_i}$ of $\phi$ is a strongly parabolic map $E_i\to E_{i-1}\otimes\Omega_C(D)$. Furthermore, stability implies that $\phi|_{E_i}\neq 0$ for $i=1,\dots,r$.
\end{prop}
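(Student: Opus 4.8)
The plan is to follow Simpson's argument, adapting each step to the parabolic setting. A point $[(E,\phi)]$ of the moduli space is fixed by $\mathbb{G}_m$ precisely when, for every $\lambda\in\mathbb{G}_m$, the parabolic Higgs bundle $(E,\lambda\phi)$ is isomorphic to $(E,\phi)$; since the objects are stable, such an isomorphism is an honest isomorphism of parabolic bundles $f_\lambda\colon E\to E$ satisfying $f_\lambda\circ\phi=\lambda\,\phi\circ f_\lambda$. For the easy direction, given a decomposition $E=\bigoplus_{i=0}^r E_i$ as parabolic bundles with $\phi(E_i)\subset E_{i-1}\otimes\Omega_C(D)$, I would let $f_\lambda$ act on $E_i$ by the scalar $\lambda^{-i}$. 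Because the decomposition is one of parabolic bundles, the flag at each $p\in D$ is the direct sum of the induced flags on the $E_i$, so scaling the summands by constants preserves every flag and every weight; hence $f_\lambda$ is a parabolic automorphism. A one-line computation on a local section of $E_i$ then gives $f_\lambda\circ\phi\circ f_\lambda^{-1}=\lambda\phi$, so $(E,\phi)\cong(E,\lambda\phi)$ for all $\lambda$ and the class is fixed.

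For the converse I would start from the family $\{f_\lambda\}$. Stability forces $(E,\phi)$ to be simple: any parabolic endomorphism commuting with $\phi$ has, by the usual slope argument applied to the parabolic sub-bundles given by its kernel and image, only scalars, so that $\mathrm{Aut}(E,\phi)=\mathbb{C}^*$. Consequently $f_\lambda$ is determined by $\lambda$ up to a scalar, and $\lambda\mapsto f_\lambda$ descends to a group homomorphism $\rho\colon\mathbb{G}_m\to\mathrm{PAut}(E):=\mathrm{Aut}(E)/\mathbb{C}^*$. The main obstacle is the rigidification: one must check that $\rho$ is algebraic and then lift it to an algebraic one-parameter subgroup $\tilde\rho\colon\mathbb{G}_m\to\mathrm{Aut}(E)$ of parabolic automorphisms. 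I would obtain algebraicity by realising the $f_\lambda$ as a section of the relative parabolic isomorphism scheme over $\mathbb{G}_m$, and treat the lift through the central extension $1\to\mathbb{C}^*\to\mathrm{Aut}(E)\to\mathrm{PAut}(E)\to 1$, which splits after pulling back along some isogeny $\lambda\mapsto\lambda^N$; the resulting factor $N$ is absorbed into the eventual reindexing.

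With an honest $\mathbb{G}_m$-action $\tilde\rho$ on $E$ by parabolic automorphisms in hand, complete reducibility of $\mathbb{G}_m$-representations yields a decomposition $E=\bigoplus_m E_m$ into parabolic eigen-subbundles, with $\tilde\rho(\lambda)$ acting on $E_m$ by $\lambda^m$; this is a decomposition as parabolic bundles because the action preserves the flags at each $p\in D$. Writing $\phi=\sum_{m,m'}\phi_{m,m'}$ with $\phi_{m,m'}\colon E_m\to E_{m'}\otimes\Omega_C(D)$, the relation $f_\lambda\circ\phi\circ f_\lambda^{-1}=\lambda\phi$ gives $\lambda^{m'-m}\phi_{m,m'}=\lambda\,\phi_{m,m'}$, so only the components with $m'=m+1$ survive, i.e.\ $\tilde\rho$ raises the weight of $\phi$ by one. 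Discarding the zero summands and indexing the nonzero eigen-subbundles by decreasing weight as $E_0,E_1,\dots,E_r$ then produces $\phi(E_i)\subset E_{i-1}\otimes\Omega_C(D)$, which is the asserted shape.

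Finally I would extract the stability assertions from $\phi$-invariant sub-bundles. In the above indexing the parabolic sub-bundle $F_{\le j}:=\bigoplus_{i\le j}E_i$ satisfies $\phi(F_{\le j})\subset F_{\le j}\otimes\Omega_C(D)$, hence is $\phi$-invariant. If the nonzero weights failed to be consecutive, a gap would split $E$ as a direct sum of two $\phi$-invariant parabolic sub-bundles; likewise, if $\phi_i=\phi|_{E_i}$ vanished for some $1\le i\le r$, then both $F_{\le i-1}$ and its complement $\bigoplus_{i'\ge i}E_{i'}$ would be $\phi$-invariant and $E$ would split as their direct sum. In either case $E$ decomposes as a direct sum of Higgs sub-bundles, which is incompatible with stability: the parabolic slopes of a direct sum average to $\pas(E)$, so one summand would violate the defining inequality. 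This yields both the consecutiveness of the indices and $\phi|_{E_i}\neq 0$ for $i=1,\dots,r$, completing the plan.
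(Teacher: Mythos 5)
Your proposal is essentially sound, but note first that the paper itself gives no proof of this proposition: it is quoted from Simpson (\cite{s}, Theorem 8), with only the remark that the parabolic case is analogous to the non-parabolic one. So the comparison is really with Simpson's argument. Both directions of your proof are correct: the scalar action $\lambda^{-i}$ on $E_i$ for the easy implication, simplicity of stable parabolic Higgs bundles, the weight decomposition into parabolic eigen-subbundles (the projectors are parabolic endomorphisms, so the flags at each $p\in D$ decompose), the computation $\lambda^{m'-m}\phi_{m,m'}=\lambda\phi_{m,m'}$ forcing $\phi$ to raise the weight by one, and the stability argument killing gaps and vanishing $\phi_i$ via a $\phi$-invariant direct-sum splitting whose slopes average to $\pas(E)$. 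Where you diverge from Simpson is the middle of the converse: you build a genuine algebraic one-parameter subgroup, which requires proving algebraicity of $\lambda\mapsto f_\lambda$ (your Isom-scheme section exists because the torsor under $\mathrm{Aut}(E,\phi)\cong\mathbb{G}_m$ over $\mathbb{G}_m$ is trivial, as $\mathrm{Pic}(\mathbb{G}_m)=0$) and then splitting the central extension up to isogeny; this is workable but is the thinnest part of your sketch, since an abstract group homomorphism out of $\mathbb{C}^*$ need not be algebraic and that point carries the real content. Simpson's own route avoids all of this: he fixes a \emph{single} $\lambda$ of infinite order, takes one isomorphism $f$ with $f\phi f^{-1}=\lambda\phi$, and decomposes $E$ into generalized eigenspaces of $f$; the relation shows $\phi$ maps the eigenspace for $a$ into the one for $\lambda a$ (tensored with $\Omega_C(D)$), finiteness of the spectrum yields chains, and stability forces a single chain --- no algebraicity or lifting is needed, and semisimplicity of $f$ is irrelevant since generalized eigenspaces suffice. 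Your version buys a cleaner conceptual statement (an honest $\mathbb{G}_m$-action on $E$) at the cost of the rigidification step; Simpson's buys economy. If you keep your route, you should write out the algebraicity and splitting arguments in full, as they are exactly the steps a referee would press on.
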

\begin{rmk}\begin{itemize}
\item If $m=0$, then $E=E_0$ and $\phi=0$, corresponding to the obvious fixed point $(E,0)$, with $E$ a stable parabolic bundle.
\item With the notation as in the above proposition, we have that $(E=\bigoplus_{i=0}^rE_i,\bigoplus_{i=1}^r\phi_i)$ is stable as a parabolic Higgs bundle if and only if the stability condition is satisfied for sub-bundles of $E$ with respect to the decomposition $E=\bigoplus_{i=0}^rE_i$.
\end{itemize}
\end{rmk}

Finally, as in the non-parabolic case, we have to recall the Hitchin map
$$f: \mathcal{M}^{d}_{n,\mathcal{D}} \to \mathcal{A}:=\oplus_{i=1}^nH^0(C,\Omega_C(D)^{\otimes i})$$ given by taking the parabolic Higgs bundle $(E,\phi)$ to the characteristic polynomial of $\phi$. Using a method developed by Langton (\cite{l}), Yokogawa shows in Corollary 5.13 and 1.6 of \cite{y} that the Hitchin map induces proper map on the moduli space $M_{n,\mathcal{D}}^{d}$.
\begin{thm}
Let ${M}^d_{n,\mathcal{D}}$ be a moduli space of semi-stable parabolic Higgs bundles of a fixed type. The the Hitchin map $M^d_{n,\mathcal{D}}\to \mathcal{A}$ is proper.
\end{thm}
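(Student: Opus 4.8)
The plan is to verify the valuative criterion of properness. Since $M^d_{n,\mathcal{D}}$ is a separated variety of finite type over $\mathbb{C}$ and $\mathcal{A}$ is an affine space, the Hitchin map $f$ is automatically separated and of finite type; properness therefore reduces to universal closedness, and it suffices to establish the existence part of the valuative criterion over discrete valuation rings. Let $R$ be such a ring with fraction field $K$ and residue field $k$, and set $\eta=\operatorname{Spec}K$, $S=\operatorname{Spec}R$. We are given a morphism $\eta\to M^d_{n,\mathcal{D}}$, that is, a semi-stable parabolic Higgs bundle $(E_\eta,\phi_\eta)$ on $C_\eta:=C\times_{\mathbb{C}}\eta$, together with a lift $S\to\mathcal{A}$ of the composite $\eta\to M^d_{n,\mathcal{D}}\to\mathcal{A}$; concretely the characteristic polynomial of $\phi_\eta$ extends to a section of $\bigoplus_i H^0(C,\Omega_C(D)^{\otimes i})\otimes R$. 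The goal is to produce a semi-stable parabolic Higgs bundle $(E_S,\phi_S)$ on $C_S:=C\times_{\mathbb{C}}S$ restricting to $(E_\eta,\phi_\eta)$ and realizing the prescribed characteristic polynomial.

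First I would extend the underlying data, ignoring semi-stability for the moment. As $C_S$ is a regular two-dimensional scheme flat over $S$, the parabolic bundle $E_\eta$ with its flags along $D_\eta$ extends to a parabolic sheaf $E_S$ on $C_S$, flat over $S$; after saturating we may take it locally free with the correct flag structure along $D_S$. The decisive point is that the boundedness of the characteristic polynomial bounds the eigenvalues, hence the order of the poles, of $\phi_\eta$, so that $\phi_\eta$ extends to a strongly parabolic field $\phi_S\colon E_S\to E_S\otimes\Omega_C(D)$ on all of $C_S$ with the prescribed characteristic polynomial; here the strongly parabolic condition is tracked via the duality of Proposition \ref{p5}.

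The heart of the argument is Langton's algorithm adapted to the parabolic Higgs setting. If the special fibre $(E_S,\phi_S)|_{C_k}$ is already semi-stable we are done. Otherwise let $F\subset E_S|_{C_k}$ be the maximal $\phi$-invariant destabilizing parabolic subsheaf, obtained from the $\phi$-invariant parabolic Harder--Narasimhan filtration of the special fibre. One performs an elementary (Hecke) modification of $E_S$ along $F$, producing a new parabolic sheaf $E_S'$ that agrees with $E_S$ over $\eta$ and still carries a strongly parabolic field with the same characteristic polynomial, the modification being $\phi$-invariant by construction. I would then show that the invariant given by the maximal $\phi$-invariant parabolic slope on the special fibre, refined lexicographically by the rank of the destabilizing piece, strictly decreases under this operation while remaining bounded below. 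Hence the process terminates and outputs a family whose special fibre is a semi-stable parabolic Higgs bundle of the fixed type; this is the desired extension, and uniqueness follows from the separatedness of $M^d_{n,\mathcal{D}}$.

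The main obstacle will be the termination step, where one must verify that simultaneously respecting the parabolic weights along $D$, the flatness of the flags over $S$, and the $\phi$-invariance of the modification does not destroy the monotone decrease of Langton's invariant. In particular, checking that the destabilizing subsheaf of the special fibre can always be chosen $\phi$-invariantly compatible with the parabolic structure, so that each Hecke modification stays inside the category of strongly parabolic Higgs bundles of the prescribed weight type, is the delicate point. This is precisely the content of Langton's method as executed by Yokogawa, whose Corollaries 5.13 and 1.6 of \cite{y} we invoke.
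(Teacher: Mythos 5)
Your proposal follows essentially the same route as the paper: the paper offers no independent argument for this theorem but simply invokes Yokogawa's result (Corollaries 5.13 and 1.6 of \cite{y}), which is proved by precisely the Langton-type semistable-reduction argument you outline, and your sketch likewise defers the genuinely delicate points to that same reference. Your outline of the method (valuative criterion, extension of the parabolic Higgs data over the discrete valuation ring, $\phi$-invariant elementary modifications, and termination of Langton's algorithm) is an accurate description of what Yokogawa's proof does, so the two approaches coincide.
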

The Hitchin map $f$ becomes equivariant with respect to the action of $\mathbb{G}_m$, if we let $\mathbb{G}_m$ act by the character $\lambda\to \lambda^i$ on the sub-space $H^0(C,\Omega_C(D)^{\otimes i})\subset \mathcal{A}$.

As in the case non-parabolic (\cite{ghs}), we have also the properties of the Bialynicki-Birula decomposition (\cite{bb}) with respect to this action which we shall recollect in the following proposition :
\begin{prop}\label{p1}
Let $n,d$ be a fixed pair of positive integers.
\begin{itemize}
\item The fixed point scheme $(M_{n,\mathcal{D}}^{d})^{\mathbb{G}_m}$ of the $\mathbb{G}_m$ action on $M_{n,\mathcal{D}}^{d}$ is a disjoint union of connected, smooth schemes $F_i$ contained in the special fiber $f^{-1}(0)$ of the Hitchin map.
\item There are $\mathbb{G}_m$-subvarieties $F_i^+,\, F_i^-\subset M_{n,\mathcal{D}}^d$ such that $F_i$ is a closed sub-scheme of $F_i^{\pm}$ and $F_i^{\pm}$ is a Zariski-locally trivial fibration over $F_i$, with fibres isomorphic to affine spaces. For any $x\in F_i$ we have $T_x(F_i^{\pm})=T_x(M)^0\oplus T_x(M)^{\pm}$ where $T_x(M)^{0,+,-}$ are the weight spaces of the tangent
space at $x$ with weight respectively 0, positive or negative.
\item We have $M^{d}_{n,\mathcal{D}}=\cup F_i^+$ and $f^{-1}(0)=\cup F_i^-$ and for all points $x\in (M^{d}_{n,\mathcal{D}})^{\mathbb{G}_m}$ we have $\dim(T_x(M)^+)=\frac{1}{2}\dim(M^{d,\mathcal{D}}_n)$. In particular the closure of the $F_i^-$ in $M^{d}_{n,\mathcal{D}}$ are the irreducible components of $f^{-1}(0)$.
\end{itemize}
\end{prop}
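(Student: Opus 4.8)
The plan is to verify that the four ingredients needed to run the Bialynicki-Birula machinery on the smooth quasi-projective variety $M^d_{n,\mathcal{D}}$ are in place, and then to invoke \cite{bb} together with the ``semiprojective'' formalism used in the non-parabolic treatment of \cite{ghs}. These ingredients are: the smoothness of $M^d_{n,\mathcal{D}}$ (recorded above), the explicit description of the fixed locus (Proposition \ref{p2}), the properness of the Hitchin map (the Theorem immediately preceding), and a weight-one symplectic form on $M^d_{n,\mathcal{D}}$ furnished by Yokogawa duality (Proposition \ref{p5}).

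First I would treat the fixed locus. By Proposition \ref{p2} a stable Higgs bundle fixed by $\mathbb{G}_m$ carries a grading $E=\bigoplus_{i=0}^r E_i$ with $\phi$ shifting the grading by one; consequently $\phi$ is nilpotent, its characteristic polynomial is $t^n$, and $f(E,\phi)=0$, so $(M^d_{n,\mathcal{D}})^{\mathbb{G}_m}\subset f^{-1}(0)$. Since $M^d_{n,\mathcal{D}}$ is smooth and the action is algebraic, the fixed scheme is itself smooth, hence a disjoint union of its smooth connected components $F_i$; this gives the first bullet. For the existence of limits, note that the degree-$i$ coefficient of the characteristic polynomial of $\lambda\phi$ is $\lambda^i$ times that of $\phi$, so $f(\lambda\cdot(E,\phi))\to 0$ as $\lambda\to 0$; properness of $f$ then keeps the orbit inside the preimage of a bounded set, so $\lim_{\lambda\to 0}\lambda\cdot(E,\phi)$ exists and is a fixed point. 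Thus every point flows into some $F_i$ as $\lambda\to 0$, giving $M^d_{n,\mathcal{D}}=\bigcup_i F_i^+$, where $F_i^+$ is the attracting set; dually the limit as $\lambda\to\infty$ exists exactly where the Hitchin image is forced to stay bounded, i.e. precisely on $f^{-1}(0)$, whence $f^{-1}(0)=\bigcup_i F_i^-$.

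This exhibits $M^d_{n,\mathcal{D}}$ as a semiprojective variety, so the Bialynicki-Birula theorem \cite{bb} applies verbatim: each $F_i^{\pm}$ is a locally closed $\mathbb{G}_m$-invariant subvariety containing $F_i$ as a closed subscheme, the map $F_i^{\pm}\to F_i$ sending a point to its limit is a Zariski-locally trivial fibration with affine-space fibres, and the tangent space at $x\in F_i$ splits as $T_x(F_i^{\pm})=T_x(M)^0\oplus T_x(M)^{\pm}$ according to the sign of the $\mathbb{G}_m$-weights, with $T_x(M)^0=T_xF_i$. This yields the second bullet and the component statement of the third.

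The remaining point, and the genuine obstacle, is the half-dimensional claim $\dim T_x(M)^+=\tfrac12\dim M^d_{n,\mathcal{D}}$, which is where the parabolic structure really enters. Here I would use that the deformation theory of $(E,\phi)$ is governed by the hypercohomology of the complex $\ph(E,E)\xrightarrow{\mathrm{ad}(\phi)}\sph(E,E\otimes\Omega_C(D))$, so that $T_xM=\mathbb{H}^1$, and that Yokogawa's duality (Proposition \ref{p5}) identifies this complex with its Serre dual, producing a nondegenerate pairing on $T_xM$. Because $\mathbb{G}_m$ scales $\phi$ linearly, this symplectic form carries weight one, so the weight-$w$ subspace pairs perfectly with the weight-$(1-w)$ subspace; hence $\dim T_x(M)^{\ge 1}=\dim T_x(M)^{\le 0}=\dim T_x(M)^0+\dim T_x(M)^-$, and combining this with $\dim M=\dim T_x(M)^++\dim T_x(M)^0+\dim T_x(M)^-$ forces $\dim T_x(M)^+=\tfrac12\dim M$. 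The main care required is to set up the weight-one symplectic form correctly in the parabolic setting, that is, to check that the strongly-parabolic/parabolic pairing of Proposition \ref{p5} is exactly the one compatible with both the scaling action and the hypercohomological complex above; after that the dimension bookkeeping is purely formal, and the final assertion that the closures of the $F_i^-$ are the irreducible components of $f^{-1}(0)$ follows since the $F_i^-$ are the attracting strata of the central fibre of dimension complementary to $T^+$.
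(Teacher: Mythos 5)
Your proposal is correct, and its skeleton coincides with the paper's proof: both bullets one and two are deduced from the Bialynicki--Birula theorem applied to the smooth quasi-projective variety $M^d_{n,\mathcal{D}}$, and both obtain $M^{d}_{n,\mathcal{D}}=\cup F_i^+$ and $f^{-1}(0)=\cup F_i^-$ from properness of the Hitchin map combined with the fact that $\mathbb{G}_m$ acts on $\mathcal{A}$ with positive weights (your valuative-criterion argument for the existence of limits is exactly what this combination is meant to produce). Where you genuinely diverge is the half-dimension statement $\dim T_x(M)^+=\tfrac{1}{2}\dim M^d_{n,\mathcal{D}}$: the paper disposes of it in one line by citing \cite[Corollary 3.10]{ggm}, whereas you prove it, identifying $T_xM$ with $\mathbb{H}^1$ of the deformation complex $\ph(E,E)\to\sph(E,E\otimes\Omega_C(D))$ and using Proposition \ref{p5} together with Serre duality to produce a nondegenerate pairing of weight one, so that the weight-$w$ and weight-$(1-w)$ subspaces pair perfectly and $\dim T^+=\dim T^0+\dim T^-$ follows by bookkeeping. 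This is in substance the mechanism behind the cited result, so your route buys self-containedness: it works uniformly in all ranks (the paper's citation is to a paper devoted to rank-$3$ parabolic Higgs bundles, even though the argument there is rank-independent), and it makes visible exactly where strong parabolicity of $\phi$ enters, namely in making the deformation complex Serre self-dual via the $\ph$/$\sph$ pairing. The price, which you correctly flag, is that one must verify the compatibility of this pairing with the scaling action and with the hypercohomological identification of the tangent space; the paper's citation absorbs that verification, at the cost of leaving the reader to unwind it from \cite{ggm}.
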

\begin{proof}
The first part of this proposition is a direct consequence of the Bialynicki-Birula decomposition theorem (\cite[Theorem 4.1]{bb} for algebraically closed field $k$ and \cite[Theorem 5.8]{h} in general). The varieties $F_i^+$ consist of those points such that $\lim_{t\to 0}t.x\in F_i$ and $F_i^-$ consists of the points such that $\lim_{t\to\infty}t.x\in F_i$.

Next we use the result of Yokogawa which the Hitchin map is proper and the fact that $\mathbb{G}_m$ acts with positive weights on $\mathcal{A}$.

The dimension formula is given in \cite[Corrolary 3.10]{ggm}.
\end{proof}
\begin{rmk}
In the non-parabolic case, Hausel (\cite[Theorem 5.2]{th}) proved that the downwards Morse flow on the moduli space of Higgs bundles coincides with nilpotent cone (the pre-image of $0$ under the Hitchin map, i.e $f^{-1}(0)$) who is Lagrangian according to Laumon (\cite{gl}). The proof of Hausel goes over word by word to the parabolic case and the nilpotent cone is isotropic because the Hitchin map is a completely integrable system. Moreover, the dimension of nilpotent cone equals haft of the moduli space $M_n^{d,\mathcal{D}}$ (cf. third part of proposition \ref{p1}), so we obtain the parabolic version of Laumon's theorem:
\begin{thm}
The nilpotent cone is a Lagrangian subvariety of the moduli space of parabolic Higgs bundles.
\end{thm}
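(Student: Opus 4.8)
The plan is to establish the two defining properties of a Lagrangian subvariety separately: that the nilpotent cone $f^{-1}(0)$ is isotropic for the natural holomorphic symplectic form $\omega$ on $M:=M^{d}_{n,\mathcal{D}}$, and that each of its irreducible components has dimension exactly $\tfrac{1}{2}\dim M$. Both facts will be read off from the Bialynicki-Birula data of Proposition \ref{p1} together with the behaviour of $\omega$ under the action $\lambda\cdot(E,\phi)=(E,\lambda\phi)$. First I would recall that the smooth quasi-projective variety $M$ carries a holomorphic symplectic form (Yokogawa, Konno) and that, because the action rescales the Higgs field linearly, $\omega$ is homogeneous of weight $1$ for the induced $\mathbb{G}_m$-action: for tangent vectors $v,v'$ at a fixed point $x$ lying in the weight spaces $T_x(M)^{w}$ and $T_x(M)^{w'}$, the pairing $\omega(v,v')$ vanishes unless $w+w'=1$.

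The isotropy is then immediate. By the third part of Proposition \ref{p1} the nilpotent cone is $f^{-1}(0)=\bigcup_i F_i^{-}$, and by the second part the tangent space to a downward cell at a fixed point $x\in F_i$ is $T_x(F_i^{-})=T_x(M)^{0}\oplus T_x(M)^{-}$, that is, it is spanned by vectors of weight $\le 0$. Since $\omega$ pairs nontrivially only weights summing to $1$, and two non-positive weights never sum to $1$, the form vanishes identically on $T_x(F_i^{-})$; hence every cell $F_i^{-}$, and so every irreducible component $\overline{F_i^{-}}$ of $f^{-1}(0)$, is isotropic.

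For the dimension count I would exploit the same weight symmetry. Nondegeneracy of $\omega$ gives perfect pairings $T_x(M)^{w}\cong\bigl(T_x(M)^{1-w}\bigr)^{*}$, hence $\dim T_x(M)^{w}=\dim T_x(M)^{1-w}$; summing over $w\ge 1$ yields $\dim T_x(M)^{+}=\dim T_x(M)^{0}+\dim T_x(M)^{-}$. Combined with the equality $\dim T_x(M)^{+}=\tfrac{1}{2}\dim M$ from the third part of Proposition \ref{p1}, together with $\dim F_i=\dim T_x(M)^{0}$ and the affine fibration $F_i^{-}\to F_i$ of relative dimension $\dim T_x(M)^{-}$, this gives $\dim F_i^{-}=\dim T_x(M)^{0}+\dim T_x(M)^{-}=\tfrac{1}{2}\dim M$ for every $i$. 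An isotropic subvariety of a symplectic variety of dimension $2N$ has dimension at most $N$, so each half-dimensional isotropic component $\overline{F_i^{-}}$ is forced to be Lagrangian, and therefore the nilpotent cone $f^{-1}(0)$ is a Lagrangian subvariety.

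The main obstacle is not the linear algebra above but the input on which it rests: one must check that the ingredients behind Proposition \ref{p1} transfer verbatim from the non-parabolic situation treated by Hausel and Laumon. Concretely, this means properness of the Hitchin map (which guarantees $f^{-1}(0)=\bigcup_i F_i^{-}$, i.e.\ that the downward Morse flow really exhausts the central fibre), the existence and the precise $\mathbb{G}_m$-weight of the holomorphic symplectic form in the parabolic setting, and the integrable-system structure underlying the duality $T^{w}\cong (T^{1-w})^{*}$. This is exactly the assertion of the preceding remark, and verifying that Hausel's Morse-theoretic argument goes through word for word for parabolic Higgs bundles is where the genuine work lies.
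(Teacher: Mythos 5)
Your overall strategy --- isotropic plus half-dimensional implies Lagrangian, with both inputs read off from the Bialynicki-Birula data of Proposition \ref{p1} --- is sound, and it is essentially the Morse-theoretic argument of Hausel that the paper invokes when it asserts that his proof ``goes over word by word'' to the parabolic case. The paper itself disposes of isotropy by a different one-line reason: the nilpotent cone is the central fibre of the Hitchin map, which is a completely integrable system, and then quotes part 3 of Proposition \ref{p1} for the dimension count, exactly as you do. Your dimension argument is correct; note that the weight-1 duality $T_x(M)^w\cong\bigl(T_x(M)^{1-w}\bigr)^*$ in fact re-derives the equality $\dim T_x(M)^+=\tfrac12\dim M$ (since it gives $\dim T^+=\dim T^0+\dim T^-$ and the three spaces sum to $\dim M$), so you need less of Proposition \ref{p1} than you quote.

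However, your isotropy step has a genuine gap. The weight decomposition of $T_x(M)$, and with it the identity $T_x(F_i^-)=T_x(M)^0\oplus T_x(M)^-$, only exists at a \emph{fixed} point $x\in F_i$. So your pairing argument proves that $\omega$ vanishes on $T_x(F_i^-)$ only for $x$ in the fixed locus $F_i$; the inference ``hence every cell $F_i^-$ is isotropic'' does not follow as written, because a holomorphic $2$-form on the smooth connected variety $F_i^-$ can vanish along the proper closed subvariety $F_i$ without vanishing identically. The standard repair is an equivariance/limit argument: for $x\in F_i^-$, $v,v'\in T_x(F_i^-)$ and $t\in\mathbb{G}_m$, weight-1 homogeneity gives $\omega_x(v,v')=t^{-1}\,\omega_{t\cdot x}\bigl(dt_x(v),dt_x(v')\bigr)$; as $t\to\infty$ the point $t\cdot x$ converges to a point of $F_i$ and the vectors $dt_x(v),dt_x(v')$ stay bounded (the action contracts the affine fibres of $F_i^-\to F_i$, all weights there being negative), so the right-hand side tends to $0$ and $\omega_x(v,v')=0$. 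This propagation of isotropy from the fixed locus to the whole downward flow is precisely the nontrivial content of Hausel's proof; without it (or the paper's alternative route via complete integrability of the Hitchin map) your argument establishes isotropy only along the fixed-point set, not on the irreducible components $\overline{F_i^-}$ of $f^{-1}(0)$.
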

\end{rmk}

The proposition \ref{p1} implies that the class of $[M^{d}_{n,\mathcal{D}}]\in \hat{K_0}(\mathrm{Var})$ can be computed in a very simple way from the classes of the $F_i$. So we obtain the corollary following likewise observed in \cite[Proposition 9.1]{ht} in the non-parabolic case :
\begin{cor}\label{corm}
Write $$N:=\frac{1}{2}\dim(M^d_{n,\mathcal{D}})= n^2(g-1)+1+\frac{1}{2}\sum_{p\in D}\left(n^2-\sum_{i=1}^{s_p}(m_i(p))^2\right)$$ then we have
$$[M^{d}_{n,\mathcal{D}}]=\mathbb{L}^N\sum_{i}[F_i]\in \hat{K_0}(\mathrm{Var}).$$
\end{cor}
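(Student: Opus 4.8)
The plan is to deduce both statements directly from the Bialynicki--Birula decomposition recorded in Proposition \ref{p1}, together with the cut-and-paste (scissor) relations that define $K_0(\mathrm{Var})$. The dimension formula for $N$ is pure bookkeeping: Yokogawa's count gives
\[
\dim(M^{d}_{n,\mathcal{D}})=2(g-1)n^2+2+\sum_{p\in D}\Bigl(n^2-\sum_{i=1}^{s_p}m_{p,i}^2\Bigr),
\]
and dividing by two yields exactly $N=n^2(g-1)+1+\tfrac12\sum_{p\in D}(n^2-\sum_i m_{p,i}^2)$, so this half of the statement requires no further work.

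For the class formula I would argue in three steps. First, by the third bullet of Proposition \ref{p1} we have $M^{d}_{n,\mathcal{D}}=\bigcup_i F_i^+$; since the fixed locus $(M^{d}_{n,\mathcal{D}})^{\mathbb{G}_m}$ has only finitely many connected components $F_i$, this is a decomposition of $M^{d}_{n,\mathcal{D}}$ into finitely many locally closed subvarieties (the strata of the Bialynicki--Birula stratification), and the scissor relation gives $[M^{d}_{n,\mathcal{D}}]=\sum_i[F_i^+]$ as a \emph{finite} sum in $\hat{K_0}(\mathrm{Var})$ --- in particular the convergence issue that otherwise haunts these computations does not arise here. Second, the second bullet of Proposition \ref{p1} says each $F_i^+$ is a Zariski-locally trivial fibration over $F_i$ with fibres isomorphic to affine space of dimension $d_i:=\dim T_x(M)^+$; because the fibration is Zariski-locally trivial with affine-space fibres its class multiplies, so $[F_i^+]=\mathbb{L}^{d_i}[F_i]$. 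Third, the crucial uniformity supplied by the third bullet of Proposition \ref{p1} is that $\dim T_x(M)^+=\tfrac12\dim(M^{d}_{n,\mathcal{D}})=N$ for every fixed point $x$; hence $d_i=N$ is independent of $i$, and factoring gives $[M^{d}_{n,\mathcal{D}}]=\sum_i\mathbb{L}^N[F_i]=\mathbb{L}^N\sum_i[F_i]$.

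The only non-formal ingredient is this last uniformity $\dim T_x(M)^+=N$, which is the real content: it reflects the balancing of the positive- and negative-weight parts of the tangent space, equivalently the fact (noted in the remark preceding the corollary) that $f^{-1}(0)$ is Lagrangian. Since Proposition \ref{p1} already records it, the present argument merely invokes it, and the expected main obstacle is thereby reduced to the routine verification that the Bialynicki--Birula strata are genuinely locally closed and that their classes are additive --- standard once the Zariski-local triviality of Proposition \ref{p1} is granted.
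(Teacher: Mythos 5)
Your proposal is correct and takes essentially the same approach as the paper: the paper obtains the corollary directly from Proposition \ref{p1}, citing the analogous observation of Hausel--Thaddeus in the non-parabolic case, and your three steps (additivity of classes over the finitely many Bialynicki--Birula strata $F_i^+$, the relation $[F_i^+]=\mathbb{L}^{d_i}[F_i]$ from Zariski-local triviality with affine-space fibres, and the uniformity $d_i=\frac{1}{2}\dim(M^{d}_{n,\mathcal{D}})=N$ recorded in the third bullet of that proposition) are exactly the content of that implication written out. The dimension bookkeeping via Yokogawa's formula likewise matches the paper's definition of $N$.
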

Using this result and the proposition \ref{p2}, we have a relation between the class of moduli space of parabolic Higgs bundles and the classes of $(\bigoplus_{i}E_i,\bigoplus_i\phi_i)$ (satisfying the conditions in proposition \ref{p2}) which we shall call \textit{moduli spaces of stable parabolic chains}.
One of the main part of this paper will be devoted to the computation of the classes of these moduli spaces.
\section{Parabolic chains}
\subsection{Definitions and basic facts}
A \textit{(holomorphic) parabolic chain} on $C$ of length $r$ is a collection $\mathbf{E_\bullet^r}=((E_i)_{i=0,\dots,r},(\phi_i)_{i=1,\dots,r})$, where $E_i$ are parabolic vector bundles on $C$ and $\phi_i: E_i\to E_{i-1}$ are strongly parabolic morphisms.

A homomorphism from $\mathbf{{E'}_\bullet^{r}}$ to $\mathbf{E_\bullet^r}$ is a collection of commutative diagrams
$$\xymatrix{E'_i\ar[r]^-{\phi'_i}\ar[d]&E'_{i-1}(D)\ar[d]\\
E_i\ar[r]^-{\phi_i}&E_{i-1}(D)
}$$
where the vertical arrows are parabolic homomorphisms. A parabolic chain $\mathbf{{E'}_\bullet^r}$ is a sub-chain of $\mathbf{E_\bullet^r}$ if the sheaf homomorphisms $E'_i\to E_i$ are injective. A sub-chain $ \mathbf{{E'}_\bullet^r}\subset \mathbf{{E}_\bullet^r}$ is called \textit{proper} if $\mathbf{{E'}_\bullet^r}\neq 0$ and $\mathbf{{E'}_\bullet^r}\neq \mathbf{{E}_\bullet^r}$.

\begin{defn}
The rank of $\mathbf{{E}_\bullet^r}$ is defined as $\rk(\mathbf{{E}_\bullet^r})=(\rk(E_i))_{i=0,\dots,r}$ and the degree is defined as $\deg(\mathbf{{E}_\bullet^r})=(\deg(E_i))_{i=0,\dots,r}$. For $\underline{\alpha}=(\alpha_i)_{i=0,\dots,r}\in \mathbb{R}^{r+1}$ the $\underline{\alpha}$-slope of $\mathbf{{E}_\bullet^r}$ is defined as
$$\pas_{\underline{\alpha}}(\mathbf{{E}_\bullet^r}):=\sum_{i=0}^r\frac{\rk(E_i)}{|\rk(\mathbf{{E}_\bullet^r})|}(\pas(E_i)+\alpha_i).$$
\end{defn}

We say $\mathbf{{E}_\bullet^r}$ is \textit{$\alpha$-(semi)-stable} if
$$\pas_{\underline{\alpha}}(\mathbf{{E'}_\bullet^r}) (\leq)< \pas_{\underline{\alpha}}(\mathbf{{E}_\bullet^r})$$ for any proper sub-chain $\mathbf{{E'}_\bullet^r}$ of $\mathbf{{E}_\bullet^r}$. 

Note that for any $c\in \mathbb{R}$, $\underline{\alpha}=(\alpha_i)_{i=0,\dots,r}$ and $\underline{\alpha}+c:=(\alpha_i+c)_{i=0,\dots,r}$ define the same (semi)-stability condition.

Given a parabolic chain $\mathbf{E_\bullet^r}=((E_i)_{i=0,\dots,r},(\phi_i)_{i=1,\dots,r})$ one has the dual parabolic chain $\mathbf{E_\bullet^r}^*=((E_i^*)_{i=r,\dots,0},(\phi_i^*)_{i=r,\dots,1})$, where $E_i^*$ is the parabolic dual of $E_i$ and $\phi_i^*$ is the transpose of $\phi_i$. We have the following proposition.
\begin{prop}
Noting that $\overline{\alpha}:=(-\alpha_{r-i})_{i=0,\dots,r}$. The $\underline{\alpha}$-(semi)-stability of $\mathbf{E_\bullet^r}$ is equivalent to the $\overline{\alpha}$-(semi)-stability of $\mathbf{E_\bullet^r}^*$. In particular for $\alpha_i=(2g-2)i$, the $\underline{\alpha}$-(semi)-stability of $\mathbf{E_\bullet^r}$ is equivalent to the $\underline{\alpha}$-(semi)-stability of $\mathbf{E_\bullet^r}^*$. The map $\mathbf{E_\bullet^r}\mapsto \mathbf{E_\bullet^r}^*$ defines an isomorphism of moduli spaces.
\end{prop}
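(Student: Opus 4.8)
The plan is to realize the dualization $\mathbf{E_\bullet^r}\mapsto\mathbf{E_\bullet^r}^*$ as an order-reversing involution on chains which negates the $\overline{\alpha}$-slope, and then to read off the stability equivalence from the elementary comparison of slopes in a short exact sequence.

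First I would fix the bookkeeping. The excerpt already presents the dual chain as $((E_i^*)_{i=r,\dots,0},(\phi_i^*)_{i=r,\dots,1})$, so reindexing by $F_j:=E_{r-j}^*$ and $\psi_j:=\phi_{r-j+1}^*$ for $j=0,\dots,r$ exhibits it as a genuine length-$r$ chain with $\psi_j\colon F_j\to F_{j-1}(D)$; that the transpose $\phi_i^*$ is again strongly parabolic and lands in $\sph(E_{i-1}^*,E_i^*(D))$ is exactly the duality of Proposition \ref{p5}. Since $E^{**}=E$ and the double transpose recovers $\phi_i$, the assignment is an involution. Next I would record the slope identity: because $\rk(E_i^*)=\rk(E_i)$ the total rank is unchanged, and because $\pd(E^*)=-\pd(E)$ we have $\pas(F_j)=-\pas(E_{r-j})$. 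Substituting $\overline{\alpha}_j=-\alpha_{r-j}$ into the definition of the $\underline{\alpha}$-slope and reindexing by $i=r-j$ yields, for \emph{every} chain,
$$\pas_{\overline{\alpha}}(\mathbf{E_\bullet^r}^*)=-\pas_{\underline{\alpha}}(\mathbf{E_\bullet^r}).$$

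The heart of the argument is the correspondence between sub-objects. I would pass from sub-chains to quotient chains first: since $\pd$ and $\rk$ are additive in a short exact sequence of chains $0\to\mathbf{E'_\bullet}\to\mathbf{E_\bullet}\to\mathbf{E''_\bullet}\to0$, the mediant inequality gives $\pas_{\underline{\alpha}}(\mathbf{E'_\bullet})<\pas_{\underline{\alpha}}(\mathbf{E_\bullet})$ iff $\pas_{\underline{\alpha}}(\mathbf{E_\bullet})<\pas_{\underline{\alpha}}(\mathbf{E''_\bullet})$ (and likewise with $\leq$). Hence $\underline{\alpha}$-(semi)stability of $\mathbf{E_\bullet^r}$ is equivalent to the reversed inequality over all proper \emph{quotient} chains. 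Applying parabolic duality term by term turns proper quotient chains $\mathbf{E''_\bullet}$ of $\mathbf{E_\bullet^r}$ into proper sub-chains $(\mathbf{E''_\bullet})^*$ of $\mathbf{E_\bullet^r}^*$, and $E^{**}=E$ shows every proper sub-chain of the dual arises this way. Combining the slope identity applied to $\mathbf{E''_\bullet}$ and its dual then converts ``$\pas_{\underline{\alpha}}(\mathbf{E_\bullet^r})<\pas_{\underline{\alpha}}(\mathbf{E''_\bullet})$ for all proper quotients'' into ``$\pas_{\overline{\alpha}}((\mathbf{E''_\bullet})^*)<\pas_{\overline{\alpha}}(\mathbf{E_\bullet^r}^*)$ for all proper sub-chains of the dual'', which is precisely $\overline{\alpha}$-stability of $\mathbf{E_\bullet^r}^*$; the semistable case is identical with $\leq$. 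I expect this step to be the main obstacle, since it requires checking that the pullback parabolic structure on a sub-chain and the quotient parabolic structure dualize to one another compatibly with the chain maps — the content of Yokogawa's duality underlying Proposition \ref{p5} — so that $\pd$ is genuinely additive and the annihilator of a sub-chain is again a sub-chain of the dual.

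Finally, for the ``in particular'' clause I would simply compute $\overline{\alpha}_i=-\alpha_{r-i}=-(2g-2)(r-i)=\alpha_i-(2g-2)r$, so that $\overline{\alpha}$ differs from $\underline{\alpha}$ by the constant $-(2g-2)r$; by the noted fact that $\underline{\alpha}$ and $\underline{\alpha}+c$ define the same (semi)stability, $\overline{\alpha}$-stability of $\mathbf{E_\bullet^r}^*$ coincides with its $\underline{\alpha}$-stability, giving the self-duality of the condition. The isomorphism of moduli spaces then follows because dualization is a morphism of the corresponding stacks — it carries algebraic families of chains to algebraic families — it preserves the (semi)stable locus by the above, and it is its own inverse, hence an isomorphism.
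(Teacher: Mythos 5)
Your proposal is correct and takes essentially the same route as the paper: the paper's proof dismisses the first claim as ``immediate from the definition,'' and your slope-negation identity $\pas_{\overline{\alpha}}(\mathbf{E_\bullet^r}^*)=-\pas_{\underline{\alpha}}(\mathbf{E_\bullet^r})$ together with the sub-chain/quotient-chain correspondence under duality is exactly the natural expansion of that remark. For the second claim the paper's whole argument is the identity $\overline{\alpha}+(2g-2)r=\underline{\alpha}$, which is precisely your computation combined with the observed invariance of (semi)stability under adding a constant to $\underline{\alpha}$.
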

\begin{proof}
The first claim is immediate from the definition of $\underline{\alpha}$-(semi)-stability. The second follows, because in this case $\overline{\alpha}+(2g-2)r=\underline{\alpha}$.
\end{proof}
The parabolic chains are related to the parabolic Higgs bundles by the proposition following.
\begin{prop}
Suppose that $(E,\Phi)$ is a (semi)-stable parabolic Higgs bundle such that $E=\bigoplus_{i=0}^r E_i$ where $E_i$ are parabolic bundles and
$$\Phi:=\left(\begin{matrix}0&\phi_1&0&\cdots&0\\
\vdots&\ddots&\ddots&\ddots&\vdots\\
\vdots&&\ddots&\ddots&0\\
0&\cdots&\cdots&0&\phi_r\\
0&\cdots&\cdots&\cdots&0
\end{matrix}\right)$$ with $\phi_i: E_i\to E_{i-1}\otimes\Omega_C(D)$ a strongly parabolic map. Then $(E,\Phi)$ is (semi)-stable if and only if the parabolic chain $((E_i\otimes \Omega^{r-i}_C)_{i=0,\dots,r},(\phi_i)_{i=0,\dots,r})$ is $\underline{\alpha}$-(semi)-stable for $\underline{\alpha}=(0,2g-2,\dots,r(2g-2))$.
\end{prop}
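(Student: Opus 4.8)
The plan is to show that testing $\underline{\alpha}$-(semi)stability of the chain on its sub-chains is exactly the same test as checking parabolic (semi)stability of $(E,\Phi)$ on its \emph{graded} $\Phi$-invariant subbundles, and then to reduce a general $\Phi$-invariant subbundle to a graded one. First I would fix the dictionary between sub-objects. Setting $F_i:=E_i\otimes\Omega_C^{r-i}$, tensoring $\phi_i\colon E_i\to E_{i-1}\otimes\Omega_C(D)$ by $\Omega_C^{r-i}$ produces a strongly parabolic map $F_i\to F_{i-1}(D)$, so $((F_i),(\phi_i))$ is a genuine parabolic chain. A subbundle $F=\bigoplus_i F_i'\subset E$ compatible with the decomposition $E=\bigoplus_i E_i$ is $\Phi$-invariant exactly when $\phi_i(F_i')\subset F_{i-1}'\otimes\Omega_C(D)$ for every $i$, which after twisting is precisely the condition that $((F_i'\otimes\Omega_C^{r-i}))$ be a sub-chain. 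Hence graded $\Phi$-invariant subbundles of $E$ correspond bijectively to sub-chains of $((F_i),(\phi_i))$.

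Next comes the slope bookkeeping, which is the whole point of the twist and of the particular choice of $\underline{\alpha}$. Since $\Omega_C^{r-i}$ is an ordinary line bundle of degree $(r-i)(2g-2)$, the additivity of the parabolic slope under tensoring with a line bundle (the remark following the stability definition) gives $\pas(F_i)=\pas(E_i)+(r-i)(2g-2)$, so with $\alpha_i=i(2g-2)$ one gets $\pas(F_i)+\alpha_i=\pas(E_i)+r(2g-2)$, independent of $i$. Using the additivity of $\pd$ over the grading and $\sum_i\rk(E_i)=\rk(E)$, a short computation then shows
$$\pas_{\underline{\alpha}}\big(((F_i),(\phi_i))\big)=\pas(E)+r(2g-2),\qquad \pas_{\underline{\alpha}}\big(\text{sub-chain of }F\big)=\pas(F)+r(2g-2),$$
for any graded subbundle $F=\bigoplus_i F_i'$. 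Thus the inequality $\pas_{\underline{\alpha}}(\cdot)\,(\le)<\,\pas_{\underline{\alpha}}\big(((F_i),(\phi_i))\big)$ on a sub-chain is \emph{literally} the inequality $\pas(F)\,(\le)<\,\pas(E)$ for the corresponding graded $\Phi$-invariant subbundle.

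Given this, the forward implication is immediate: every sub-chain is a graded $\Phi$-invariant subbundle, so (semi)stability of $(E,\Phi)$ forces the chain inequality on every sub-chain. The converse is where the real work lies, and I expect it to be the main obstacle. A general $\Phi$-invariant subbundle $F\subset E$ need not respect the grading $E=\bigoplus_i E_i$, hence need not be a sub-chain, so the chain inequality cannot be applied to it directly. The plan is to replace $F$ by its associated graded $\mathrm{gr}(F)=\bigoplus_i\mathrm{gr}_i(F)$ with respect to the weight filtration coming from the $\mathbb{G}_m$-action fixing $(E,\Phi)$ (Proposition \ref{p2}): because $\Phi$ lowers the weight by one and $F$ is $\Phi$-invariant, $\mathrm{gr}(F)$ is again $\Phi$-invariant and is graded, with $\mathrm{gr}_i(F)\hookrightarrow E_i$. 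The delicate points to verify are that this associated graded is again a \emph{subbundle} (not merely a subsheaf), and that the degeneration preserves both $\rk$ and $\pd$ — the parabolic structures being compatible precisely because $E=\bigoplus_i E_i$ is a decomposition of parabolic bundles — so that $\pas(\mathrm{gr}(F))=\pas(F)$.

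Once this is established the converse follows by contraposition: a destabilizing $\Phi$-invariant $F$ would produce a destabilizing graded subbundle $\mathrm{gr}(F)$, i.e. a sub-chain violating $\underline{\alpha}$-(semi)stability, contradicting the hypothesis. The hard part will therefore be making the associated-graded/limit argument rigorous in the parabolic category, namely controlling the limit subbundle and checking the invariance of rank and parabolic degree under the one-parameter degeneration; the remaining steps are the purely formal untwisting and the slope identity above.
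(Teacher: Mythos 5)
Your core argument is the same as the paper's: identify sub-chains of $((E_i\otimes\Omega_C^{r-i}),(\phi_i))$ with $\Phi$-invariant subbundles compatible with the decomposition $E=\bigoplus_i E_i$, and check that the twist by $\Omega_C^{r-i}$ together with $\alpha_i=i(2g-2)$ turns the chain-slope inequality into $\pas(F)\,(\leq)<\,\pas(E)$; that computation is exactly the displayed equivalence in the paper's proof. Where you diverge is the converse direction. The paper disposes of it in one line, writing an arbitrary $\Phi$-invariant subbundle $E'$ as $\bigoplus_i(E'\cap E_i)$; read literally this is false (as you observe, a $\Phi$-invariant subbundle need not be graded), and what the paper is really invoking is the second bullet of the remark following Proposition \ref{p2}: for a $\mathbb{G}_m$-fixed parabolic Higgs bundle of chain type, (semi)stability need only be tested on subbundles respecting the decomposition. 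That statement is part of the Simpson fixed-point material the paper recalls without proof, so the quickest way to complete your write-up is simply to cite it. Your alternative --- degenerating $F$ to its associated graded via the $\mathbb{G}_m$-action --- is precisely the standard proof of that remark, and your plan is sound: taking the automorphisms $\psi_t$ of $E$ acting by $t^i$ on $E_i$, the flat limit of $\psi_t(F)$ as $t\to 0$ is a graded, $\Phi$-invariant subsheaf of the same rank and ordinary degree; the weight contribution to $\pd$ is upper semicontinuous in the limit, so the limit (and a fortiori its saturation, which remains graded and $\Phi$-invariant) has parabolic slope at least $\pas(F)$, which is all the contrapositive needs. So your route buys self-containedness --- it proves the reduction that the paper only quotes --- at the cost of the degeneration argument you correctly single out as the only nontrivial step; neither route has a gap once that reduction is either cited or carried out as above.
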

\begin{proof} We consider a sub-bundle $E'\subset E$ with $\Phi(E')\subset E'\otimes \Omega_C(D)$. We can rewrite $E'=\oplus_{i=0}^rE'_i$ by taking $E'_i:=E_i\cap E'$ and hence it defines a sub-chain $((E'_i\otimes \Omega_C^{r-i})_{i=0,\dots,r},(\phi'_i)_{i=0,\dots,r})$ where $\phi'_i=\phi_i|_{E'_i}$. The result follows now from the equivalence between
$$ \sum_{i=0}^{r}\frac{\rk(E'_i)}{|\rk(\mathbf{E'^r_\bullet})|}\pas(E'_i)(\leq)<\sum_{i=0}^{r}\frac{\rk(E_i)}{|\rk(\mathbf{{E}_\bullet^r})|}\pas(E_i),\quad \text{and}$$
\begin{eqnarray*}\sum_{i=0}^r\frac{\rk(E'_i)}{|\rk(\mathbf{E'^r_\bullet})|}(\pas(E'_i)+(r-i)(2g-2)+i(2g-2))\\(\leq)<\sum_{i=0}^r\frac{\rk(E_i)}{|\rk(\mathbf{{E}_\bullet^r})|}(\pas(E_i)+(r-i)(2g-2)+i(2g-2)) \end{eqnarray*}
Here, we use $\pas(E^{\circ}_i)+(r-i)(2g-2)=\pas(E^{\circ}_i\otimes\Omega_C^{r-i})$ (where we can replace $\circ$ by $'$ or nothing) so the second inequality above is the $\underline{\alpha}$-(semi)-stability of the chain $((E_i\otimes \Omega^{r-i}_C)_{i=0,\dots,r},(\phi_i)_{i=0,\dots,r})$, for $\underline{\alpha}=(0,2g-2,\dots,r(2g-2))$.
\end{proof}
Let $\mathbf{E'^r_{\bullet}}$ and $\mathbf{E''^r_{\bullet}}$ be two parabolic chains. Let $\ho(\mathbf{E''^r_{\bullet}},\mathbf{E'^r_{\bullet}})$ denote the linear space of homomorphism from $\mathbf{E''^r_{\bullet}}$ to $\mathbf{E'^r_{\bullet}}$ where by this we mean a collection of commutative diagrams
$$\xymatrix{E''_i\ar[r]^-{\phi''_i}\ar[d]&E''_{i-1}(D)\ar[d]\\
E'_i\ar[r]^-{\phi'_i}&E'_{i-1}(D)
}$$ with the vertical arrows are parabolic homomorphisms, and let $\e^1(\mathbf{E''^r_{\bullet}},\mathbf{E'^r_{\bullet}})$ denote the linear space of equivalence classes of extensions of the form
$$0\to \mathbf{E'^r_{\bullet}}\to \mathbf{E^r_{\bullet}}\to \mathbf{E''^r_{\bullet}}\to 0,$$
where by this we mean a collection of commutative diagrams
$$\xymatrix{0\ar[r]&E'_i\ar[d]^-{\phi'_i}\ar[r]&E_{i}\ar[d]^-{\phi_i}\ar[r]&E''_{i}\ar[d]^-{\phi''_i}\ar[r]&0\\
0\ar[r]&E'_{i-1}(D)\ar[r]&E_{i-1}(D)\ar[r]&E''_{i-1}(D)\ar[r]&0
}$$ with the horizontal arrows are parabolic homomorphisms.

As in the case of parabolic bundles, given an extension $\mathbf{E'^r_{\bullet}}\to \mathbf{E^r_{\bullet}}\to \mathbf{E''^r_{\bullet}}$
of chains of rank $\underline{n}'$ and $\underline{n}''$ we have
$$\pas_{\underline{\alpha}}(\mathbf{E^r_\bullet})=\frac{|\underline{n}'|}{|\underline{n}|}\pas_{\underline{\alpha}}(\mathbf{E'^r_\bullet})+\frac{|\underline{n}''|}{|\underline{n}|}\pas_{\underline{\alpha}}(\mathbf{E''^r_\bullet}).$$
So the slope of an extension is a convex combination of the slope of the constituents. This property immediately implies the following properties of stability parabolic chains:
\begin{lem}\label{lem3}\begin{enumerate}
\item A parabolic chain $\mathbf{E^r_\bullet}$ is $\underline{\alpha}$-semi-stable if and only if for any quotient $\mathbf{E^r_{\bullet}}\twoheadrightarrow \mathbf{E''^r_\bullet}$ we have $\pas_{\underline{\alpha}}(\mathbf{E^r_{\bullet}})\leq\pas_{\underline{\alpha}}(\mathbf{E''^r_{\bullet}})$.

\item If $\mathbf{E^r_{\bullet}},\mathbf{F^r_\bullet}$ are $\underline{\alpha}$-semi-stable with $\pas_{\underline{\alpha}}(\mathbf{E^r_{\bullet}})>\pas_{\underline{\alpha}}(\mathbf{F^r_{\bullet}})$ then \linebreak $\ho(\mathbf{E^r_{\bullet}},\mathbf{F^r_{\bullet}})=0$.
\item For every parabolic chain $\mathbf{E^r_{\bullet}}$ there is a canonical Harder-Narasimhan flag of parabolic sub-chains $0\subset\mathbf{E^r_{\bullet}}^{(1)}\subset\dots\subset\mathbf{E^r_{\bullet}}^{(h)}=\mathbf{E^r_{\bullet}}$, such that
$\pas_{\underline{\alpha}}(\mathbf{E^r_{\bullet}}^{(1)})>\dots>\pas_{\underline{\alpha}}(\mathbf{E^r_{\bullet}}^{(h)})$ and the sub-quotients $\mathbf{E^r_{\bullet}}^{(i)}/\mathbf{E^r_{\bullet}}^{(i-1)}$ are $\underline{\alpha}$-semi-stable. 
\end{enumerate}
\end{lem}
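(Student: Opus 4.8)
The plan is to derive all three statements formally from the additivity of rank and parabolic degree along short exact sequences of chains, that is, from the convex-combination formula for $\pas_{\underline{\alpha}}$ displayed just above the lemma. The one structural input I would fix first is the ambient category: parabolic chains, with the inherited parabolic structures on sub-chains and quotients, are objects of the category of $\mathbb{R}$-filtered (parabolic) sheaves of Yokogawa \cite{y1}, in which $|\rk(\cdot)|$ and $\pd$ — hence $|\underline{n}|\,\pas_{\underline{\alpha}}(\cdot)$ — are additive on short exact sequences, and in which every morphism has a kernel, a cokernel and a canonical image. Recall moreover that the quotient of a chain carries exactly the weights discarded by the sub-chain (as explained after Definition \ref{def1}), so that $\pd$ really is additive; everything below is then formal manipulation of the slope.

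For (1) I would pass between a proper sub-chain and the corresponding quotient. Given $\mathbf{E'^r_\bullet}\subset\mathbf{E^r_\bullet}$ of rank $\underline{n}'$ with quotient $\mathbf{E''^r_\bullet}$ of rank $\underline{n}''$, the convex-combination identity exhibits $\pas_{\underline{\alpha}}(\mathbf{E^r_\bullet})$ as a weighted average of $\pas_{\underline{\alpha}}(\mathbf{E'^r_\bullet})$ and $\pas_{\underline{\alpha}}(\mathbf{E''^r_\bullet})$, whence
$$\pas_{\underline{\alpha}}(\mathbf{E'^r_\bullet})\leq\pas_{\underline{\alpha}}(\mathbf{E^r_\bullet})\ \Longleftrightarrow\ \pas_{\underline{\alpha}}(\mathbf{E^r_\bullet})\leq\pas_{\underline{\alpha}}(\mathbf{E''^r_\bullet}).$$
Since $\mathbf{E'^r_\bullet}\mapsto\mathbf{E^r_\bullet}/\mathbf{E'^r_\bullet}$ sets up a bijection between proper sub-chains and proper quotients (kernel/cokernel), ranging over all sub-chains is the same as ranging over all quotients, and the asserted equivalence follows.

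For (2) let $f\colon\mathbf{E^r_\bullet}\to\mathbf{F^r_\bullet}$ be nonzero and factor it as $\mathbf{E^r_\bullet}\twoheadrightarrow\mathbf{I^r_\bullet}\hookrightarrow\mathbf{F^r_\bullet}$ through its image. As $\mathbf{I^r_\bullet}$ is a nonzero quotient of the semistable $\mathbf{E^r_\bullet}$, part (1) gives $\pas_{\underline{\alpha}}(\mathbf{E^r_\bullet})\leq\pas_{\underline{\alpha}}(\mathbf{I^r_\bullet})$; as it is a nonzero sub-chain of the semistable $\mathbf{F^r_\bullet}$, the definition gives $\pas_{\underline{\alpha}}(\mathbf{I^r_\bullet})\leq\pas_{\underline{\alpha}}(\mathbf{F^r_\bullet})$; chaining these contradicts $\pas_{\underline{\alpha}}(\mathbf{E^r_\bullet})>\pas_{\underline{\alpha}}(\mathbf{F^r_\bullet})$, forcing $f=0$. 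I expect the main obstacle to be precisely the parabolic subtlety hidden in this factorization: the image acquires a priori two parabolic structures, one as a quotient of $\mathbf{E^r_\bullet}$ and one as a subobject of $\mathbf{F^r_\bullet}$, and the two inequalities are attached to different ones. The point is that $f$ being a parabolic morphism (Definition \ref{def1}) forces its quotient weights to be dominated by its sub weights, so $\pas_{\underline{\alpha}}(\mathbf{I^r_\bullet})_{\mathrm{quot}}\leq\pas_{\underline{\alpha}}(\mathbf{I^r_\bullet})_{\mathrm{sub}}$ and the two inequalities point the same way; in the filtered-sheaf formulation where image and coimage are canonically identified this comparison is automatic, but it is the one place where the parabolic hypotheses genuinely enter.

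For (3) I would build the Harder--Narasimhan flag by the standard maximal-destabilizing construction. First one checks that the parabolic slopes of sub-chains of a fixed $\mathbf{E^r_\bullet}$ are bounded above with the supremum attained — using that subsheaves of a bundle on a curve with fixed rank vector have bounded degree, the parabolic correction term being bounded — so that a maximal destabilizing sub-chain $\mathbf{E^r_\bullet}^{(1)}$ exists; it is automatically $\underline{\alpha}$-semi-stable, and it is unique because two distinct maximal ones would admit a nonzero morphism between subquotients of equal maximal slope, excluded by (2). Iterating on $\mathbf{E^r_\bullet}/\mathbf{E^r_\bullet}^{(1)}$, whose rank vector is strictly smaller, the process terminates by induction and yields a flag $0\subset\mathbf{E^r_\bullet}^{(1)}\subset\dots\subset\mathbf{E^r_\bullet}^{(h)}=\mathbf{E^r_\bullet}$ with semistable subquotients; the strict inequalities $\pas_{\underline{\alpha}}(\mathbf{E^r_\bullet}^{(i)}/\mathbf{E^r_\bullet}^{(i-1)})>\pas_{\underline{\alpha}}(\mathbf{E^r_\bullet}^{(i+1)}/\mathbf{E^r_\bullet}^{(i)})$ and the uniqueness of the whole flag again follow from (2).
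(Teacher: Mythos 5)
Your proposal is correct and takes essentially the same route as the paper: the paper in fact offers no proof of this lemma, stating only that it follows ``immediately'' from the convex-combination formula for slopes in extensions, and your argument is the standard elaboration of exactly that input (the convexity identity for (1), factorization through the image together with the quotient-structure-versus-sub-structure weight comparison for (2), and the maximal-destabilizing iteration for (3)). One minor caution on (3): part (2) only forbids nonzero maps when the slope \emph{strictly} drops, so uniqueness of the maximal destabilizing sub-chain and the strict decrease of successive HN slopes are not literally ``excluded by (2)'' in the equal-slope case --- there you need the maximal-slope-then-maximal-rank normalization (e.g.\ via the sum of two maximal sub-chains, whose slope is again maximal by (1) and convexity but whose rank is larger), whereas your closing appeal to (2) for uniqueness of the whole flag, via containments in both directions, is sound.
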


For any parabolic chain $\bf{E^r_\bullet}$ we shall denote by $\pas_{\underline{\alpha},\max}(\mathbf{E^r_\bullet}):=\pas_{\underline{\alpha}}(\mathbf{E^r_\bullet}^{(1)})$ the maximal $\underline{\alpha}$-slope of parabolic sub-chains of $\mathbf{E^r_\bullet}$ and by
$\pas_{\underline{\alpha},\min}(\mathbf{E^r_\bullet}):=\pas_{\underline{\alpha}}(\mathbf{E^r_\bullet}^{(h)}/\mathbf{E^r_\bullet}^{(h-1)})$ the minimal $\underline{\alpha}$-slope of quotients of $\bf{E}_\bullet$.
\subsection{Necessary conditions for the existence of semi-stable parabolic chains}
In this subsection we want to collect conditions on $\underline{n},\underline{d},\underline{\alpha}$ which are necessary for the existence of $\underline{\alpha}$-semi-stable parabolic chains of rank $\underline{n}$, degree $\underline{d}$ and fixed generic data $\mathcal{D}$. We can find this kind of necessary conditions in \cite[Proposition 4]{gh} for non-parabolic chains and in \cite[Proposition 4.3]{ggm} for parabolic triples.
\begin{prop}\label{p4}
Fix $\underline{n}=(n_0,\dots,n_r)\in \mathbb{N}^{r+1}$, $\underline{d}=(d_0,\dots,d_r)\in \mathbb{Z}^{r+1}$, $\underline{\alpha}=(\alpha_0,\dots,\alpha_r)\in \mathbb{R}^{r+1}$ a (semi)-stability parameter satisfying $\alpha_0<\alpha_1<\dots<\alpha_r$ and a (generic) data $\mathcal{D}$.
An $\underline{\alpha}$-semi-stable parabolic chain $\bf{E^r_{\bullet}}$ of rank $\underline{n}$ and degree $\underline{d}$ can only exist if
\begin{enumerate}
\item for all $j\in {0,\dots,r-1}$, we have
$$\sum_{i=0}^j\frac{n_i}{\sum_{k=0}^jn_k}(\pas(E_i)+\alpha_i)\leq \pas_{\underline{\alpha}}(\bf{E^r_{\bullet}});
$$
\item for all j such that $n_j=n_{j-1}$, we have
$$\pd(E_j)-n_j.|D|\leq \pd(E_{j-1});$$
\item for all $0\leq k<j\leq r$ such that $n_j<\min\{n_k,\dots,n_{j-1}\}$, we have
\begin{eqnarray*}
\frac{\sum_{i\not\in[k,j]}(\pd(E_i)+n_i\alpha_i)+(j-k+1)\pd(E_j)}{\sum_{i\not\in[k,j]}n_i+(j-k+1)n_j}+\\
+\frac{\left(\sum_{i=k}^j\alpha_i-\frac{(j-k+1)(j-k)}{2}s\right)n_j}{\sum_{i\not\in[k,j]}n_i+(j-k+1)n_j}
\leq\pas_{\underline{\alpha}}(\bf{E^r_{\bullet}});
\end{eqnarray*}

\item for all $0\leq k<j\leq r$ such that $n_k<\min\{n_{k+1},\dots,n_{j}\}$, we have
\begin{eqnarray*}\frac{\sum_{i=k+1}^j(\pd(E_i)-\pd(E_k)-n_k(i-k)|D|+\alpha_i(n_i-n_k))}{\sum_{i=k+1}^j(n_i-n_k)}\\\leq \pas_{\underline{\alpha}}(\bf{E^r_{\bullet}}).
\end{eqnarray*}
\end{enumerate}
\end{prop}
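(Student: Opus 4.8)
The plan is to derive each of the four inequalities by exhibiting an explicit parabolic sub-chain $\mathbf{F^r_\bullet}\subseteq \mathbf{E^r_\bullet}$ and invoking $\underline{\alpha}$-semi-stability in the form $\pas_{\underline{\alpha}}(\mathbf{F^r_\bullet})\le \mu$, where $\mu:=\pas_{\underline{\alpha}}(\mathbf{E^r_\bullet})$. Throughout I use that a collection $(F_i)$ with $F_i\subseteq E_i$ is a sub-chain exactly when $\phi_i(F_i)\subseteq F_{i-1}(D)$ for all $i$, so that a sub-chain is forced to propagate \emph{downward} along the maps $\phi_i$, and that by Lemma \ref{lem3} the slope of an extension is the convex combination of the slopes of sub and quotient. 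For (1) I take the initial-segment sub-chain $F_i=E_i$ for $i\le j$ and $F_i=0$ for $i>j$; this is a sub-chain because the truncation only kills higher terms, and its $\underline{\alpha}$-slope is literally the left-hand side of (1), so semi-stability gives the claim at once.

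For (3) the construction is the heart of the matter. Given $k<j$ with $n_j<\min\{n_k,\dots,n_{j-1}\}$, I build a sub-chain that is full (equal to $E_i$) outside $[k,j]$ and has rank $n_j$ in each slot $i\in[k,j]$. Starting from $F_j=E_j$, I define $F_{i-1}$, for $i$ running from $j$ down to $k+1$, as a rank-$n_j$ subbundle of $E_{i-1}$ containing the saturation of $(\phi_i(F_i))(-D)$; such a subbundle exists because $\phi_i(F_i)$ has rank at most $n_j<n_{i-1}$. By construction $\phi_i(F_i)\subseteq F_{i-1}(D)$, and the sub-chain condition also holds at the two boundary indices $i=k$ and $i=j+1$, since there $F_i\subseteq E_i$ while the neighbouring slot is full. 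The key numerical input is the parabolic-degree estimate $\pd(F_{i-1})\ge \pd(F_i)-n_j|D|$, coming from the twist by $\mathcal{O}(D)$ together with the fact that $\phi_i$ is strongly parabolic and generically injective on $F_i$; iterating from $i=j$ downward yields $\sum_{i=k}^j\pd(F_i)\ge (j-k+1)\pd(E_j)-\tfrac{(j-k+1)(j-k)}{2}n_j|D|$, which is exactly the numerator in (3) (so that the $s$ there is $|D|$). Since enlarging the $\pd(F_i)$ with fixed ranks only raises $\pas_{\underline{\alpha}}(\mathbf{F^r_\bullet})$, this lower bound gives that the left-hand side of (3) is $\le \pas_{\underline{\alpha}}(\mathbf{F^r_\bullet})\le \mu$.

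For (4) I pass to the dual chain $\mathbf{E^r_\bullet}^*$. By the duality proposition above, $\mathbf{E^r_\bullet}$ is $\underline{\alpha}$-semi-stable iff $\mathbf{E^r_\bullet}^*$ is $\overline{\alpha}$-semi-stable, and under the reindexing $i\mapsto r-i$ the hypothesis $n_k<\min\{n_{k+1},\dots,n_j\}$ becomes precisely the hypothesis of (3) for $\mathbf{E^r_\bullet}^*$. Substituting $\pd(E_i^*)=-\pd(E_i)$, $\alpha^*_l=-\alpha_{r-l}$ and $\pas_{\overline{\alpha}}(\mathbf{E^r_\bullet}^*)=-\mu$ then converts inequality (3) for $\mathbf{E^r_\bullet}^*$ into (4) for $\mathbf{E^r_\bullet}$, where Proposition \ref{p5} ensures the duality is compatible with the parabolic and strongly parabolic structures. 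Finally (2) is the boundary case $n_j=n_{j-1}$ not covered by (3). If $\phi_j$ is generically injective I apply $\det\phi_j\colon \det E_j\to \det(E_{j-1}(D))$, a nonzero map of line bundles, to obtain $\pd(E_j)\le \pd(E_{j-1}(D))=\pd(E_{j-1})+n_j|D|$, which is (2); the strongly parabolic condition guarantees the parabolic weights do not spoil this. If $\phi_j$ fails to be generically injective, its kernel produces a sub-chain supported at the slot $j$ and the chain decouples enough that comparing this slope with that of the initial segment $(E_0,\dots,E_{j-1},0,\dots,0)$ through $\alpha_{j-1}<\alpha_j$ again forces $\pas(E_j)-\pas(E_{j-1})\le|D|$, i.e. (2).

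I expect the main obstacle to be the parabolic-degree bookkeeping in (3): showing that the rank-$n_j$ subbundles $F_i$ can be chosen so that $\pd(F_{i-1})\ge \pd(F_i)-n_j|D|$ holds on the nose. When the composite maps $\phi_{i+1}\circ\cdots\circ\phi_j$ fail to be generically injective, the naive images drop rank, and one must control the parabolic degree of the rank-$n_j$ extension. This is where the strongly parabolic hypothesis of Definition \ref{def1}, the relation between parabolic degree and the weights discarded along sub-bundles (Remark \ref{rmkw}), and the parabolic duality of Proposition \ref{p5} do the real work, and it is the step I would write out most carefully.
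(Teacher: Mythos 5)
Your choice of destabilizing objects is exactly the one the paper has in mind (its proof of Proposition \ref{p4} is a one-line citation of \cite[Proposition 4]{gh}, and the remark following the proposition records precisely your sub-chains): initial segments for (1), a sub-chain equal to $\mathbf{E^r_\bullet}$ outside $[k,j]$ and a twisted copy of $E_j$ inside for (3) (your reading $s=|D|$ of the typo is correct), and duality/quotient chains for (4). The genuine gap sits exactly where you said you would "write things out most carefully," and it is not a matter of bookkeeping: your construction for (3) fails when some $\phi_i|_{F_i}$ is not generically injective. In that case the saturation of $\phi_i(F_i)(-D)$ has rank $<n_j$, and you propose to enlarge it to a rank-$n_j$ parabolic subbundle $F_{i-1}\subseteq E_{i-1}$ satisfying $\pd(F_{i-1})\ge \pd(F_i)-n_j|D|$. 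Such a subbundle need not exist: in the extreme case $\phi_i|_{F_i}=0$ you would need a rank-$n_j$ subbundle of $E_{i-1}$ of parabolic degree at least $\pd(E_j)-(j-i+1)n_j|D|$, whereas the maximal parabolic degree of rank-$n_j$ subbundles of $E_{i-1}$ is an invariant of $E_{i-1}$ that semi-stability of the chain does not tie to $\pd(E_j)$ in this way. Strong parabolicity, Remark \ref{rmkw} and Proposition \ref{p5} control weights, not the degree of an arbitrary rank-completion, so they cannot repair this step.

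What actually works in the degenerate case is a different mechanism: when a kernel appears, use the kernel itself. Already for $r=1$, $k=0$, $j=1$ with $K=\ker\phi_1\neq 0$, both $(0,K)$ and $(\mathrm{im}\,\phi_1(-D),E_1)$ are sub-chains; applying semi-stability to each and using $\pd(\mathrm{im}\,\phi_1)\ge \pd(E_1)-\pd(K)$, the two inequalities add up to give (3) with a strict excess $\rk(K)\left(\alpha_1-\alpha_0+|D|\right)>0$ — the hypothesis $\alpha_1>\alpha_0$ is what makes the kernel term over-compensate, and this is the pattern of the proof in \cite[Proposition 4]{gh}. Nothing in your proposal produces this argument, and it is the only non-formal part of the proof. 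The same defect affects (2): in the non-generically-injective case, "the chain decouples enough" is not an argument; one again combines the sub-chain $(0,\dots,0,\ker\phi_j,0,\dots,0)$ with the image sub-chain. Finally, even in the generically injective case of (2), $\det\phi_j$ only yields $\deg E_j\le \deg E_{j-1}+n_j|D|$, an inequality of \emph{ordinary} degrees; to obtain the stated inequality of \emph{parabolic} degrees one must exploit the vanishing of $\det\phi_j$ along $D$ forced by strong parabolicity (this is what the Euler-characteristic formulas for $\sph$ versus $\h$ encode), and that is precisely the bookkeeping you assert "does not spoil this" without proof.
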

\begin{proof}The proof is similar to the one given in \cite[Proposition 4]{gh}.
\end{proof}
\begin{rmk}
\begin{itemize}
\item The inequalities (1) simply come from that \linebreak $(E_0,\dots,E_j,0,\dots,0)$ is the obvious sub-chains of $\bf{E^r_\bullet}$.
\item Given chain $\bf{E^r_\bullet}$, condition (3) is equivalent to
$$\pas_{\underline{\alpha}}(\bf{E'^r_\bullet})\leq \pas_{\underline{\alpha}}(\bf{E^r_\bullet}),$$
where the chain $\bf{E'^r_\bullet}$ is obtained from $\bf{E^r_\bullet}$ by replacing the parabolic bundles $E_i$ by $E_j(-(j-i)D)$ for $i=j-1,\dots, k$. However, this chain is a sub-chain of $\bf{E^r_\bullet}$ only if the composition of the maps
$E_j(-(j-k)D)\to E_{j-1}(-(j-1-k)D)\to\dots\to E_k$ is injective.
\item Similarly, (4) expresses the inequality
$$\pas_{\underline{\alpha}}(\bf{E^r_\bullet})\leq \pas_{\underline{\alpha}}(\bf{E''^r_\bullet}),$$
where the chain $\bf{E''^r_\bullet}$ is obtained from $\bf{E^r_\bullet}$ by replacing the parabolic bundles $E_i$ by $E_k((i-k)D)$ for $i=k+1,\dots, j$. This chain is a quotient chain of $\bf{E^r_\bullet}$ only if the composition of the maps $E_j\to E_{j-1}(D)\to\dots\to E_k((j-k)D)$ is surjective. We can see that the last condition (4) is a dual of the condition (3) by mean of passing to the dual chain. So the condition (4) of \cite[Proposition 4]{gh} must be $n_k<\min\{n_{k+1},\dots,n_j\}$ instead of $n_j>\max\{n_k,\dots,n_{j-1}\}$.
\end{itemize}
\end{rmk}
For fixed data, the parabolic degree depends only on the degree so we can see the conditions of proposition \ref{p4} as the conditions for $d_i$. Similarly \cite[Corollary 6]{gh}, we obtain the corollary following.
\begin{cor}\label{cor1}
For fixed data, fixed values of $\underline{n}\in\mathbb{N}^{r+1}$ and $\underline{\alpha}\in\mathbb{R}^{r+1}$ satisfying $\alpha_r>\alpha_{r-1}>\dots>\alpha_0$ and $d\in \mathbb{Z}$, there are only finitely many values $\underline{d}\in \mathbb{Z}^{r+1}$ with $\sum d_i=d$ which satisfy the condition given in Proposition \ref{p4}.
\end{cor}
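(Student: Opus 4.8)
The plan is to turn the inequalities of Proposition \ref{p4} into two-sided bounds on each $\pd(E_i)$, from which the finiteness of the integer vectors $\underline{d}$ is immediate. First I would record two reductions. Since the data $\mathcal{D}$ is fixed, the parabolic and ordinary degrees differ by a constant, $\pd(E_i)=d_i+\gamma_i$ with $\gamma_i$ depending only on $\underline{m}$ and $\underline{w}$; hence it suffices to bound each $\pd(E_i)$. Moreover the total slope
$$\pas_{\underline{\alpha}}(\mathbf{E^r_\bullet})=\frac{1}{|\underline{n}|}\left(\pd(E)+\sum_{i=0}^r n_i\alpha_i\right)$$
depends only on $d=\sum_i d_i$, on $\underline{n}$, on $\underline{\alpha}$ and on $\mathcal{D}$, so under our hypotheses it is a fixed rational number $\mu$ which I treat as a known constant throughout; in particular $\pd(E)=\mu|\underline{n}|-\sum_i n_i\alpha_i$ is fixed.

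The engine of the argument is that Proposition \ref{p4} yields an upper bound on $\pd(E_0)$ together with an upper bound on each consecutive jump $\Delta_j:=\pd(E_j)-\pd(E_{j-1})$ for $j=1,\dots,r$. The bound on $\pd(E_0)$ comes from condition (1) with $j=0$, which reads $\pas(E_0)+\alpha_0\leq\mu$, whence $\pd(E_0)\leq n_0(\mu-\alpha_0)$. For the jumps I would split into the three comparisons of adjacent ranks. If $n_{j-1}=n_j$, condition (2) gives directly $\Delta_j\leq n_j|D|$. If $n_j<n_{j-1}$, I apply condition (3) with $k=j-1$: there $j-k+1=2$, and using $\sum_{i\notin[k,j]}\pd(E_i)=\pd(E)-\pd(E_{j-1})-\pd(E_j)$ the numerator collapses to $\pd(E)+\Delta_j$ plus terms depending only on $\underline{n},\underline{\alpha}$ and $|D|$, so after clearing the (fixed, positive) denominator the inequality becomes $\Delta_j\leq c_j$ for an explicit constant $c_j$. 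Symmetrically, if $n_{j-1}<n_j$, condition (4) with $k=j-1$ has a single inner term $i=j$ and positive denominator $n_j-n_{j-1}$, and likewise reduces to an upper bound $\Delta_j\leq c_j'$. Thus every jump is bounded above.

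With these in hand the conclusion is a short telescoping. Writing $\pd(E_i)=\pd(E_0)+\sum_{l=1}^i\Delta_l$ and combining the upper bound on $\pd(E_0)$ with the upper bounds on the $\Delta_l$ gives $\pd(E_i)\leq B_i$ for every $i$. Feeding these back into the identity $\pd(E_i)=\pd(E)-\sum_{l\neq i}\pd(E_l)$, and using that $\pd(E)$ is the fixed constant above, produces a lower bound $\pd(E_i)\geq\pd(E)-\sum_{l\neq i}B_l$. Hence each $\pd(E_i)$, and therefore each $d_i$, lies in a fixed bounded interval, so only finitely many $\underline{d}$ with $\sum_i d_i=d$ can satisfy the conditions.

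I expect the only genuine work to be the bookkeeping of the middle paragraph: checking that conditions (2), (3), (4) specialized to adjacent indices $k=j-1$ exhaust all three possible comparisons of consecutive ranks, and that in each case the surviving non-$\Delta_j$ terms are constants fixed by $\underline{n},\underline{\alpha},|D|$ and $d$ rather than hidden functions of $\underline{d}$. Once that combinatorial reduction is verified, the passage from ``upper bounds on $\pd(E_0)$ and on all jumps'' to ``two-sided bounds on all $\pd(E_i)$'' is purely formal, exactly as in \cite[Corollary 6]{gh}.
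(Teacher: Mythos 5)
Your proof is correct and follows essentially the route the paper itself takes, which is simply to invoke the argument of \cite[Corollary 6]{gh}: specialize conditions (2)--(4) of Proposition \ref{p4} to adjacent indices to bound each jump $\pd(E_j)-\pd(E_{j-1})$ above, use condition (1) with $j=0$ to bound $\pd(E_0)$, and then play these upper bounds against the fixed total $\pd(E)$ to get two-sided bounds on every $d_i$. Your write-up in fact supplies the bookkeeping (constant shifts $\gamma_i$, positivity of the denominators, exhaustiveness of the three rank comparisons) that the paper leaves implicit in its citation.
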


\subsection{Extensions and deformations of parabolic chains}
In this section for the convenience, we shall consider the more generally parabolic chains $\mathbf{E}_{\bullet}=(E_i,\phi_i)$ with $i\in \mathbb{Z}$ where there are only finite many $E_i$ are non-zero. We shall extend any chain $\mathbf{E^r_{\bullet}}$ by putting $E_i:=0$ for all $i<0$ and all $i>r$. Similarly we shall allow (semi)-stability parameters $\underline{\alpha}=(\alpha_i)_{i\in \mathbb{Z}}$.

Now we consider the following complex of sheaves
$$C^\bullet(\mathbf{E''}_\bullet,\mathbf{E'}_\bullet):\oplus_{i}\ph(E''_i,E'_i)\buildrel c\over{\rightarrow} \oplus_{i}\sph(E''_i,E'_{i-1}(D))$$ where the differential $c$ is defined by
$$c((f_i))=(\phi'_i\circ f_i - f_{i-1}\circ \phi''_i).$$
\begin{prop}\label{p6}
There are natural isomorphisms
$$\ho(\mathbf{E''}_\bullet,\mathbf{E'}_\bullet)\simeq \mathbb{H}^0(C^\bullet(\mathbf{E''}_\bullet,\mathbf{E'}_\bullet)),$$
$$\e^1(\mathbf{E''}_\bullet,\mathbf{E'}_\bullet)\simeq \mathbb{H}^1(C^\bullet(\mathbf{E''}_\bullet,\mathbf{E'}_\bullet)),$$
and a long exact sequence associated to the complex $C^\bullet(\mathbf{E''}_\bullet,\mathbf{E'}_\bullet)$ :
$$\begin{matrix}
0&\to&\mathbb{H}^0&\to&H^0(\oplus_i\ph(E''_i,E'_i))&\to&H^0(\oplus_i\sph(E''_i,E'_{i-1}(D)) \\
&\to&\mathbb{H}^1&\to&H^1(\oplus_i\ph(E''_i,E'_i))&\to&H^1(\oplus_i\sph(E''_i,E'_{i-1}(D))\\
&\to&\mathbb{H}^2&\to &0.&
\end{matrix}$$
where $\mathbb{H}^i=\mathbb{H}^i(C^\bullet(\mathbf{E''}_\bullet,\mathbf{E'}_\bullet)).$

Moreover, if the $\phi ''_i$ are injective for all $i$ or the $\phi'_i$ are generically surjective for all $i$, then $\mathbb{H}^2=0.$
\end{prop}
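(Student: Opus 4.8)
The plan is to treat $C^\bullet=C^\bullet(\mathbf{E''}_\bullet,\mathbf{E'}_\bullet)$ as a two-term complex of coherent sheaves $\mathcal F^0\xrightarrow{c}\mathcal F^1$ concentrated in degrees $0,1$, where $\mathcal F^0=\oplus_i\ph(E''_i,E'_i)$ and $\mathcal F^1=\oplus_i\sph(E''_i,E'_{i-1}(D))$, and to read everything off from its hypercohomology. First I would check that $c$ is a genuine morphism of sheaves: since the $\phi'_i,\phi''_i$ are strongly parabolic and the $f_i$ are parabolic, each composite $\phi'_i\circ f_i$ and $f_{i-1}\circ\phi''_i$ is strongly parabolic, so $c((f_i))$ indeed lands in $\mathcal F^1$. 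The tautological short exact sequence of complexes $0\to\mathcal F^1[-1]\to C^\bullet\to\mathcal F^0\to0$ then produces a long exact hypercohomology sequence whose connecting maps $H^n(\mathcal F^0)\to H^n(\mathcal F^1)$ are induced by $c$; because $C$ is a smooth complete curve, $H^2$ of any coherent sheaf vanishes, so the sequence terminates exactly as in the statement and gives at once $\mathbb H^0=\ker(H^0(\mathcal F^0)\xrightarrow{c}H^0(\mathcal F^1))$ and $\mathbb H^2=\operatorname{coker}(H^1(\mathcal F^0)\xrightarrow{c}H^1(\mathcal F^1))$. The identification $\ho(\mathbf{E''}_\bullet,\mathbf{E'}_\bullet)\simeq\mathbb H^0$ is then immediate, since under $H^0(\mathcal F^0)=\oplus_i\Ph(E''_i,E'_i)$ the condition $(f_i)\in\ker c$ reads $\phi'_i\circ f_i=f_{i-1}\circ\phi''_i$ for all $i$, which is precisely the commutativity defining a chain homomorphism.

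The isomorphism $\e^1(\mathbf{E''}_\bullet,\mathbf{E'}_\bullet)\simeq\mathbb H^1$ is the substantial point, and I would establish it by an explicit \v{C}ech cocycle construction rather than by abstract formalism. Fix an open cover of $C$ fine enough that all the bundles split over each $U_\alpha$. Given an extension $0\to\mathbf{E'}_\bullet\to\mathbf{E}_\bullet\to\mathbf{E''}_\bullet\to0$, choose parabolic splittings $s^\alpha_i\colon E''_i|_{U_\alpha}\to E_i|_{U_\alpha}$ and set $a^{(i)}_{\alpha\beta}:=s^\beta_i-s^\alpha_i\in\ph(E''_i,E'_i)(U_{\alpha\beta})$ and $b^{(i)}_\alpha:=\phi_i\circ s^\alpha_i-s^\alpha_{i-1}\circ\phi''_i\in\sph(E''_i,E'_{i-1}(D))(U_\alpha)$, the latter landing in $E'_{i-1}(D)$ because the extension diagram commutes modulo $\mathbf{E'}_\bullet$. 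Then $a=(a^{(i)})$ is a \v{C}ech $1$-cocycle for $\mathcal F^0$, and since $\phi_i$ restricts to $\phi'_i$ on $E'_i$ a short computation gives $\delta b=c(a)$; hence $(a,b)$ is a $1$-cocycle in the total complex and defines a class in $\mathbb H^1$. Replacing the splittings by others differing by $g^{(i)}_\alpha\in\ph(E''_i,E'_i)(U_\alpha)$ alters $(a,b)$ by the total coboundary $(\delta g,c(g))$, so the class is well defined and depends only on the equivalence class of the extension. The inverse sends $(a,b)$ to the parabolic bundle extension $E_i$ determined by $a^{(i)}$ equipped with the chain map read off from $b^{(i)}$. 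Verifying that these two assignments are mutually inverse and additive is the main obstacle, but it is the exact parabolic transcription of the classical chain computation and introduces no new phenomenon.

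Finally, for the vanishing of $\mathbb H^2=\operatorname{coker}(H^1(\mathcal F^0)\xrightarrow{c}H^1(\mathcal F^1))$ I would dualize. By Proposition~\ref{p5}, $\ph(E''_i,E'_i)$ is Serre dual to $\sph(E'_i,E''_i(D))$ and $\sph(E''_i,E'_{i-1}(D))$ is Serre dual to $\ph(E'_{i-1},E''_i)$, so surjectivity of $c$ on $H^1$ is equivalent, via Serre duality on $C$, to injectivity of the transpose map on $H^0$. A direct inspection identifies this transpose, up to sign, with the differential of $C^\bullet(\mathbf{E'}_\bullet,\sigma\mathbf{E''}_\bullet)$, where $\sigma\mathbf{E''}_\bullet$ is the chain with $(\sigma\mathbf{E''})_i=E''_{i+1}$ and maps $\phi''_{i+1}$; by the Hom identification already proved, its kernel is $\ho(\mathbf{E'}_\bullet,\sigma\mathbf{E''}_\bullet)$. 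It then remains to see this shifted Hom space vanishes under either hypothesis. Such a homomorphism is a family $f_i\colon E'_i\to E''_{i+1}$ with $\phi''_{i+1}\circ f_i=f_{i-1}\circ\phi'_i$; since $E'_{-1}=0$, the relation at $i=0$ gives $\phi''_1\circ f_0=0$, so if every $\phi''_i$ is injective an ascending induction forces all $f_i=0$. Dually, since $E''_{r+1}=0$ one has $f_r=0$, so if every $\phi'_i$ is generically surjective a descending induction forces all $f_i=0$. Either way $\mathbb H^2=0$, which completes the argument.
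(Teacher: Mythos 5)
Your proof is correct, but it is genuinely different from what the paper does: the paper omits the proof entirely, observing that the statement is a special case of Gothen--King's results on twisted quiver bundles (\cite[Theorems 4.1 and 5.1]{gk}), whereas you give a self-contained argument. Your identification of $\mathbb{H}^0$ and the long exact sequence via the tautological triangle $\mathcal F^1[-1]\to C^\bullet\to \mathcal F^0$ is the standard mechanism underlying the cited result, and your \v{C}ech construction of the class of an extension is the concrete form of the $\e^1\simeq\mathbb{H}^1$ isomorphism; the one point you should make explicit there is that local \emph{parabolic} splittings exist (an extension of parabolic bundles splits locally compatibly with the flags, since the flag data lives at finitely many points), which is what makes $a^{(i)}_{\alpha\beta}$ parabolic and $b^{(i)}_\alpha$ strongly parabolic. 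Your $\mathbb{H}^2$ vanishing argument is also sound, but note that under Serre duality the dual complex acquires a twist: the transpose of $c$ on $H^1$ is the differential of $C^\bullet(\mathbf{E'}_\bullet,\mathbf{E''}_{\bullet-1}\otimes\Omega_C)$, not of $C^\bullet(\mathbf{E'}_\bullet,\sigma\mathbf{E''}_\bullet)$ on the nose; since tensoring every term of the shifted chain by the line bundle $\Omega_C$ changes neither the commutation relations nor the injectivity/generic surjectivity hypotheses, your ascending and descending inductions go through verbatim, so this is an imprecision rather than a gap. In fact this last step essentially reproves, by hand, what the paper later records as equation (\ref{eqn2}) and Lemma \ref{lem1}. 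The trade-off: the paper's citation buys brevity and the full generality of Gothen--King's homological algebra (all $\mathrm{Ext}$ groups, functoriality, arbitrary twisting sheaves), while your route buys self-containedness and an explicit cocycle description of extension classes, at the cost of the splitting and duality bookkeeping just mentioned.
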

\begin{proof}
The proof is omitted since it is a special case of a much more general result proved in \cite[Theorem 4.1 and 5.1]{gk}.
\end{proof}
The above proposition is most useful, if the last group $\mathbb{H}^2$ in the above sequence vanishes. Now, we shall see how to handle it.

Applying Serre duality for hyper-cohomology of $C^\bullet(\mathbf{E''}_\bullet,\mathbf{E'}_\bullet)$ we have :
\begin{equation}\label{eqn2}
\mathbb{H}^i(C^\bullet(\mathbf{E''}_\bullet,\mathbf{E'}_\bullet))^*\cong \mathbb{H}^{2-i}(C^\bullet(\mathbf{E'}_\bullet,\mathbf{E''}_{\bullet-1}\otimes \Omega_C)),
\end{equation}
where $\mathbf{E''}_{\bullet-1}$ is the parabolic chain obtained by shifting the parabolic chain $\mathbf{E''}_{\bullet}$ by placing $E''_i$ in the degree $i-1$, so that the bundles of the resulting chains may be non-zero for $-1\leq i\leq r-1$. Here, we used the Proposition \ref{p5}.

Take $i=2$ in the Isomorphism \ref{eqn2}, we obtain the lemma following.
\begin{lem}\label{lem1}
Let $\mathbf{E'}_{\bullet},\mathbf{E''}_{\bullet}$ be parabolic chains, then we have
$$\mathbb{H}^2(C^\bullet(\mathbf{E''}_\bullet,\mathbf{E'}_\bullet))^*\cong \ho(\mathbf{E''}_\bullet,\mathbf{E'}_{\bullet-1}).$$
\end{lem}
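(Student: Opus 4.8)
The plan is to read off the lemma as the top-degree case of the Serre-duality isomorphism \ref{eqn2}, followed by the identification of zeroth hypercohomology with chain homomorphisms provided by Proposition \ref{p6}. Both inputs are already available, so the argument is essentially a substitution together with some index bookkeeping.

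First I would put $i=2$ in \ref{eqn2}. The complex $C^\bullet(\mathbf{E''}_\bullet,\mathbf{E'}_\bullet)$ has nonzero terms only in degrees $0$ and $1$ and lives on the curve $C$, so its hypercohomology is concentrated in degrees $0,1,2$ and $\mathbb{H}^2$ is the top group. By \ref{eqn2} its dual is the bottom group $\mathbb{H}^{2-2}=\mathbb{H}^0$ of the shifted, $\Omega_C$-twisted complex, i.e.
$$\mathbb{H}^2(C^\bullet(\mathbf{E''}_\bullet,\mathbf{E'}_\bullet))^*\cong \mathbb{H}^0(C^\bullet(\mathbf{E'}_\bullet,\mathbf{E''}_{\bullet-1}\otimes\Omega_C)).$$
No positivity or convergence hypothesis is needed here, since \ref{eqn2} is already an equality of finite-dimensional vector spaces obtained from Proposition \ref{p5}.

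Next I would apply the first isomorphism of Proposition \ref{p6}, namely $\mathbb{H}^0(C^\bullet(\mathbf{A}_\bullet,\mathbf{B}_\bullet))\cong\ho(\mathbf{A}_\bullet,\mathbf{B}_\bullet)$, to the pair $\mathbf{A}_\bullet=\mathbf{E'}_\bullet$ and $\mathbf{B}_\bullet=\mathbf{E''}_{\bullet-1}\otimes\Omega_C$. Before doing so I would check that $\mathbf{E''}_{\bullet-1}\otimes\Omega_C$ is again a genuine parabolic chain: the reindexing $\bullet\mapsto\bullet-1$ merely relabels the degrees, while tensoring with the ordinary line bundle $\Omega_C$ preserves the parabolic structure and sends strongly parabolic maps to strongly parabolic maps, so Proposition \ref{p6} applies verbatim. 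This rewrites the right-hand side as $\ho(\mathbf{E'}_\bullet,\mathbf{E''}_{\bullet-1}\otimes\Omega_C)$.

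The step I expect to require the most care is reconciling this last Hom-space with the expression written in the statement. Concretely, one must follow the reindexing $j=i-1$ so that the degree-$2$ summand on the left is paired with the degree-$0$ summand on the right, use Proposition \ref{p5} to identify the Serre dual of the typical term $\sph(E''_i,E'_{i-1}(D))$ with $\ph(E'_{i-1},E''_i)$, and verify that these pieces assemble — with the induced differential — into exactly $C^\bullet(\mathbf{E'}_\bullet,\mathbf{E''}_{\bullet-1}\otimes\Omega_C)$. Getting the placement of the two chains, the shift, and the $\Omega_C$-twist right in the resulting Hom-space is the only genuine content of the argument; everything else is a direct appeal to \ref{eqn2} and Proposition \ref{p6}.
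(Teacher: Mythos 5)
Your argument is correct and is precisely the paper's own proof: the paper obtains the lemma simply by setting $i=2$ in the isomorphism (\ref{eqn2}) and identifying the resulting $\mathbb{H}^0$ with a space of chain homomorphisms via Proposition \ref{p6}. As for the reconciliation you single out in your last paragraph as the only genuine content, it is neither possible nor needed: the space your derivation produces, $\ho(\mathbf{E'}_\bullet,\mathbf{E''}_{\bullet-1}\otimes\Omega_C)$, is the correct right-hand side, and the expression $\ho(\mathbf{E''}_\bullet,\mathbf{E'}_{\bullet-1})$ printed in the lemma --- with the two chains transposed and the $\Omega_C$-twist dropped --- is a typo in the paper (the corresponding non-parabolic statement in \cite{ghs} reads $\ho(E'_\bullet,E''_{\bullet-1}\otimes\Omega_C)$, and it is that form which is actually used in proving Lemma \ref{keylem}). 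The index bookkeeping you describe, namely dualizing term by term via Proposition \ref{p5} so that $\sph(E''_i,E'_{i-1}(D))^*\cong\ph(E'_{i-1},E''_i)$ and checking that these pieces, with the induced differential, assemble into $C^\bullet(\mathbf{E'}_\bullet,\mathbf{E''}_{\bullet-1}\otimes\Omega_C)$, is exactly the verification of (\ref{eqn2}) itself, which the paper likewise asserts without proof.
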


Using the above lemma, we have got the lemma following which is the key to our computation
\begin{lem}\label{keylem}
Suppose for all $i$ we have $\alpha_i-\alpha_{i-1}\geq 2g-2$.
\begin{enumerate}
\item Let $\mathbf{E'}_{\bullet},\mathbf{E''}_{\bullet}$ be parabolic chains of slope $\pas_{\underline{\alpha},\min}(\bf{E'_\bullet})>\pas_{\underline{\alpha},\max}(\bf{E''_\bullet})$. Then
$\mathbb{H}^2(C^\bullet(\mathbf{E''}_\bullet,\mathbf{E'}_\bullet))=0$.
\item If $\bf{E_\bullet}$ is an $\underline{\alpha}$-stable parabolic chain, then $\mathbb{H}^2(C^\bullet(\mathbf{E}_\bullet,\mathbf{E}_\bullet))=0$.
\end{enumerate}
\end{lem}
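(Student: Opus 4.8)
The plan is to deduce both statements from Serre duality together with a slope estimate showing that the ``shift-by-one'' operation $\bullet\mapsto\bullet-1$, combined with the twist by $\Omega_C$, never raises the relevant slopes once $\alpha_i-\alpha_{i-1}\geq 2g-2$. First I would reduce the vanishing of $\mathbb{H}^2$ to a Hom-vanishing statement: taking $i=2$ in the isomorphism \eqref{eqn2} and using Proposition~\ref{p6} to identify $\mathbb{H}^0$ with a space of chain homomorphisms (cf. Lemma~\ref{lem1}), one obtains
$$\mathbb{H}^2(C^\bullet(\mathbf{E''}_\bullet,\mathbf{E'}_\bullet))^*\cong \ho(\mathbf{E'}_\bullet,\mathbf{E''}_{\bullet-1}\otimes\Omega_C).$$
Thus it suffices to prove that this last Hom-space vanishes, and in part (2) one takes $\mathbf{E'}_\bullet=\mathbf{E''}_\bullet=\mathbf{E}_\bullet$.

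Next I would record the Hom-vanishing criterion in the Harder-Narasimhan formalism of Lemma~\ref{lem3}: if two chains $\mathbf{A}_\bullet,\mathbf{B}_\bullet$ satisfy $\pas_{\underline{\alpha},\min}(\mathbf{A}_\bullet)>\pas_{\underline{\alpha},\max}(\mathbf{B}_\bullet)$, then $\ho(\mathbf{A}_\bullet,\mathbf{B}_\bullet)=0$, since the image of a nonzero homomorphism is at once a quotient of $\mathbf{A}_\bullet$, hence of slope $\geq\pas_{\underline{\alpha},\min}(\mathbf{A}_\bullet)$, and a sub-chain of $\mathbf{B}_\bullet$, hence of slope $\leq\pas_{\underline{\alpha},\max}(\mathbf{B}_\bullet)$, which is absurd. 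The heart of the matter is then the estimate $\pas_{\underline{\alpha},\max}(\mathbf{E''}_{\bullet-1}\otimes\Omega_C)\leq \pas_{\underline{\alpha},\max}(\mathbf{E''}_\bullet)$. To prove it I would observe that the sub-chains of $\mathbf{E''}_{\bullet-1}\otimes\Omega_C$ are exactly the $\mathbf{S}_{\bullet-1}\otimes\Omega_C$ with $\mathbf{S}_\bullet\subseteq\mathbf{E''}_\bullet$; twisting by the line bundle $\Omega_C$ adds $2g-2$ to every $\underline{\alpha}$-slope, while the shift replaces $\alpha_i$ by $\alpha_{i-1}$ on the component $E''_i$, changing the slope by the convex combination $\sum_i\frac{\rk(S_i)}{|\rk(\mathbf{S}_\bullet)|}(\alpha_{i-1}-\alpha_i)\leq-(2g-2)$. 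The two contributions cancel, and taking the maximum over sub-chains gives the displayed inequality. Together with the hypothesis $\pas_{\underline{\alpha},\min}(\mathbf{E'}_\bullet)>\pas_{\underline{\alpha},\max}(\mathbf{E''}_\bullet)$ this yields $\pas_{\underline{\alpha},\min}(\mathbf{E'}_\bullet)>\pas_{\underline{\alpha},\max}(\mathbf{E''}_{\bullet-1}\otimes\Omega_C)$, proving part (1).

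For part (2) the same estimate only gives $\pas_{\underline{\alpha},\max}(\mathbf{E}_{\bullet-1}\otimes\Omega_C)\leq \pas_{\underline{\alpha}}(\mathbf{E}_\bullet)=\pas_{\underline{\alpha},\min}(\mathbf{E}_\bullet)$, a non-strict inequality, so the criterion does not apply verbatim; this borderline case is the main obstacle. I would settle it by a direct argument: a nonzero $f\in\ho(\mathbf{E}_\bullet,\mathbf{E}_{\bullet-1}\otimes\Omega_C)$ would have image of slope squeezed between $\pas_{\underline{\alpha}}(\mathbf{E}_\bullet)$ and $\pas_{\underline{\alpha},\max}(\mathbf{E}_{\bullet-1}\otimes\Omega_C)=\pas_{\underline{\alpha}}(\mathbf{E}_\bullet)$, forcing all the above inequalities to be equalities. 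Since $\mathbf{E}_\bullet$ is $\underline{\alpha}$-stable, equality forces $\alpha_i-\alpha_{i-1}=2g-2$ on the support of $\mathbf{E}_\bullet$, so there the shift is a constant translation of slopes and $\mathbf{E}_{\bullet-1}\otimes\Omega_C$ is again $\underline{\alpha}$-stable of the same slope as $\mathbf{E}_\bullet$; a nonzero map between stable objects of equal slope is an isomorphism, whence $f$ would induce $\mathbf{E}_\bullet\cong\mathbf{E}_{\bullet-1}\otimes\Omega_C$. This is impossible, because the support of $\mathbf{E}_{\bullet-1}\otimes\Omega_C$ is that of $\mathbf{E}_\bullet$ shifted down by one, so the two chains cannot be isomorphic. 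Hence $f=0$ and $\mathbb{H}^2(C^\bullet(\mathbf{E}_\bullet,\mathbf{E}_\bullet))=0$.
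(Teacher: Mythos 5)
Your proposal is correct and is essentially the paper's own argument (the paper's proof simply defers to \cite[Lemma 4.6]{ghs}): Serre duality \eqref{eqn2} reduces the claim to the vanishing of $\ho(\mathbf{E'}_\bullet,\mathbf{E''}_{\bullet-1}\otimes\Omega_C)$, the hypothesis $\alpha_i-\alpha_{i-1}\geq 2g-2$ shows that shift-and-twist does not raise maximal slopes, and the stable case ends in the support contradiction. Two minor remarks: you silently (and correctly) fix the misprint in Lemma~\ref{lem1} (which should read $\ho(\mathbf{E'}_\bullet,\mathbf{E''}_{\bullet-1}\otimes\Omega_C)$), and your detour in part (2) through stability of $\mathbf{E}_{\bullet-1}\otimes\Omega_C$ and the iso-between-stables argument could be shortened: once $f$ is forced to be injective in each degree, $E_r\hookrightarrow E_{r+1}\otimes\Omega_C=0$ gives $E_r=0$, then $E_{r-1}=0$, and so on, so $\mathbf{E}_\bullet=0$, a contradiction.
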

\begin{proof}
The proof is similar to the one of \cite[Lemma 4.6]{ghs}.
\end{proof}
\subsection{The classes of Harder-Narasimhan strata}

We note by $\pb_{n,\mathcal{D}}^d$ the moduli stack of parabolic bundles of rank $n$, degree $d$ and data $\mathcal{D}$, i.e : 

$$\pb_{n,\mathcal{D}}^d(T):=\left<\begin{matrix}E \in \mathrm{Bundle}_n^d(T),\\(E_{p,s_p}\subset \dots \subset E_{p,1}=E_{|p\times T})_{p\in D} \\
\text{ a partial flag of type } (n, s_p, (m_{p,i})).  
\end{matrix}\right>$$
We recall that $\mathrm{Bundle}_n^d$ is the moduli stack of vector bundles of rank $n$ and degree $d$; and a partial flag  $E_{p,s_p}\subset \dots \subset E_{p,1}$ is called of type $(n, s_p, (m_{p,i}))$, if $\dim(E_{p,i})-\dim(E_{p,i+1})=m_{p,i}$ (note that $\dim(E_{p,s_p+1})=0$). 

We denote by $\mathrm{Flag}(n,m,(r_i))$ the variety of all flags $k^n=F_1\supset \dots F_m\supset F_{m+1}=0$ of vector subspaces in $k^n$, with $\dim(F_i)-\dim(F_{i+1})=r_i$. By identifying this variety with the homogeneous space $\mathrm{GL}_n/P$, where  $P$
to be the group of nonsingular block upper triangular matrices, where the dimensions of the blocks are $r_i$, we obtain the following lemma :
\begin{lem}\label{flag1}
\begin{eqnarray*}
[\mathrm{Flag}(n,m,(r_i))]&=&\frac{[\mathrm{GL}_n]}{[P]}\\
&=& \frac{\prod_{i=0}^{n-1}(\mathbb{L}^n-\mathbb{L}^i)}{\left[\prod_{i=1}^m\prod_{j=0}^{r_i-1}(\mathbb{L}^{r_i}-\mathbb{L}^j)\right]\left[\mathbb{L}^{\sum_{i=1}^{m-1}r_i\sum_{j=i+1}^mr_j}\right]}\\
&=& \frac{\prod_{i=0}^{n-1}(\mathbb{L}^n-\mathbb{L}^i)}{\left[\prod_{i=1}^m\prod_{j=0}^{r_i-1}(\mathbb{L}^{r_i}-\mathbb{L}^j)\right]\left[\mathbb{L}^{\sum_{1\leq i <j\leq m}r_ir_j}\right]}
\end{eqnarray*}
\end{lem}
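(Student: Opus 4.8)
The plan is to realize the flag variety as a homogeneous space and to exploit the fact that the quotient map by the parabolic subgroup is Zariski-locally trivial, so that its class multiplies in the Grothendieck ring; the three factors appearing in the statement are then exactly the classes of $\mathrm{GL}_n$, the Levi subgroup of $P$, and its unipotent radical.

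First I would fix the standard flag $k^n=F_1^0\supset\dots\supset F_{m+1}^0=0$ with $\dim(F_i^0)-\dim(F_{i+1}^0)=r_i$, and observe that $\mathrm{GL}_n$ acts transitively on $\mathrm{Flag}(n,m,(r_i))$: any flag of the prescribed type is carried to the standard one by a suitable change of basis adapted to the two flags. The stabilizer of $F_\bullet^0$ is precisely the group $P$ of block upper-triangular invertible matrices with diagonal block sizes $r_1,\dots,r_m$, which gives the identification $\mathrm{Flag}(n,m,(r_i))\cong \mathrm{GL}_n/P$ recorded in the statement.

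The key step, and the main obstacle, is the multiplicativity of the class along $\mathrm{GL}_n\to \mathrm{GL}_n/P$: in the Grothendieck ring a fibration with constant fibre need not have multiplicative class, so something must be said. I would argue that $P$ is a special group. Writing $P=L\ltimes U$ with Levi factor $L=\prod_{i=1}^m\mathrm{GL}_{r_i}$ and unipotent radical $U$, both factors are special (products of general linear groups and split unipotent groups are special), and specialness is stable under such extensions. Hence the principal $P$-bundle $\mathrm{GL}_n\to \mathrm{GL}_n/P$ is Zariski-locally trivial, which yields $[\mathrm{GL}_n]=[P]\cdot[\mathrm{Flag}(n,m,(r_i))]$ in $K_0(\mathrm{Var})$. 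Since $[P]$ is invertible in $\hat{K_0}(\mathrm{Var})$, this is exactly the first equality $[\mathrm{Flag}]=[\mathrm{GL}_n]/[P]$.

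It then remains to compute the three classes explicitly. For $[\mathrm{GL}_n]$ I count ordered bases of $k^n$: the $i$-th basis vector may be any vector outside the span of the previous $i-1$, giving $\prod_{i=0}^{n-1}(\mathbb{L}^n-\mathbb{L}^i)$, the numerator. For $[P]$ I use $[P]=[L]\cdot[U]$, since a semidirect product is a product as a variety: the Levi contributes $[L]=\prod_{i=1}^m[\mathrm{GL}_{r_i}]=\prod_{i=1}^m\prod_{j=0}^{r_i-1}(\mathbb{L}^{r_i}-\mathbb{L}^j)$ by the same count, while $U$ is the affine space of strictly-upper-block entries, of dimension $\sum_{1\le i<j\le m}r_ir_j$, so $[U]=\mathbb{L}^{\sum_{1\le i<j\le m}r_ir_j}$. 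Substituting produces the displayed denominator; the final equality in the statement is merely the bookkeeping identity $\sum_{i=1}^{m-1}r_i\sum_{j=i+1}^m r_j=\sum_{1\le i<j\le m}r_ir_j$.
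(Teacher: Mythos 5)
Your proof is correct and takes essentially the same route as the paper: identify $\mathrm{Flag}(n,m,(r_i))$ with $\mathrm{GL}_n/P$, compute $[\mathrm{GL}_n]$ by the column-by-column count, and compute $[P]$ as the class of the Levi $\prod_i \mathrm{GL}_{r_i}$ times the affine space of strict upper-block entries. You are in fact more careful than the paper, which asserts $[\mathrm{Flag}]=[\mathrm{GL}_n]/[P]$ without comment, whereas you justify the multiplicativity by noting that $P$ is special (extension of a product of $\mathrm{GL}$'s by a split unipotent group), so the torsor $\mathrm{GL}_n\to \mathrm{GL}_n/P$ is Zariski-locally trivial.
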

\begin{proof}
To see $[\mathrm{GL}_n]=\prod_{i=0}^{n-1}(\mathbb{L}^n-\mathbb{L}^i)$, we use the usual argument that the first column of an invertible matrix is an arbitrary element of $\mathbb{A}^n-\{0\}$, the second then gives a factor $\mathbb{A}^n-\mathbb{A}^1$ and so on.

To calculate $[P]$, we can see that $$[P]=\prod_{i=1}^m[\mathrm{GL}_{r_i-r_{i+1}}]\prod_{i=1}^{m-1}[\mathrm{Mat}_{r_i\times \sum_{j=i+1}^mr_j}],$$ where $\mathrm{Mat}_{r_i\times \sum_{j=i+1}^mr_j}$ is an affine space of matrices of type ${r_i\times \sum_{j=i+1}^mr_j}$.
\end{proof}

Consider the forgetful map $\pb^{d}_{n,,\mathcal{D}}\to \mathrm{Bundle}_n^d$. The fibres of this morphism are products of flags manifolds $\prod_{p\in D}\mathrm{Flag}(n,s_p,(m_{p,i}))$. Using the lemma above, we have 
\begin{cor}
{\small\begin{eqnarray*}[\pb_{n,\mathcal{D}}^d]&=&[\mathrm{Bundle}^d_n]\times\\
&\times &\prod_{p\in D}\frac{\prod_{i=0}^{n-1}(\mathbb{L}^n-\mathbb{L}^i)}{\left[\prod_{i=1}^{s_p}\prod_{j=0}^{m_{p,i}-1}(\mathbb{L}^{m_{p,i}}-\mathbb{L}^j)\right]\left[\mathbb{L}^{\sum_{1\leq i<j\leq s_p}m_{p,i}m_{p,j}}\right]}\end{eqnarray*}}
\end{cor}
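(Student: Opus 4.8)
The plan is to realize the forgetful morphism $\pi:\pb_{n,\mathcal{D}}^d\to\mathrm{Bundle}_n^d$ as a tower (or fiber product) of relative flag bundles and then invoke multiplicativity of the class under Zariski-locally trivial fibrations. First I would unwind the definition of $\pb_{n,\mathcal{D}}^d$ recalled just above: a $T$-point consists of a bundle $E\in\mathrm{Bundle}_n^d(T)$ together with, at each marked point $p\in D$, a partial flag $E_{p,s_p}\subset\dots\subset E_{p,1}=E|_{p\times T}$ of type $(n,s_p,(m_{p,i}))$. Thus the fiber of $\pi$ over a fixed $E$ is precisely the product over $p\in D$ of the flag variety $\mathrm{Flag}(n,s_p,(m_{p,i}))$ of such flags in the $n$-dimensional fiber $E_p$, which is exactly the identification asserted in the paragraph preceding the statement.

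Next I would upgrade this pointwise description to a genuine fibration statement. Restricting the universal bundle on $\mathrm{Bundle}_n^d\times C$ to $\{p\}\times\mathrm{Bundle}_n^d$ produces a rank-$n$ vector bundle $\mathcal{E}_p$, and $\pb_{n,\mathcal{D}}^d$ is then the fiber product over $\mathrm{Bundle}_n^d$ of the relative flag bundles $\mathrm{Flag}(\mathcal{E}_p)$, one for each $p\in D$. Since every vector bundle is Zariski-locally trivial, each $\mathrm{Flag}(\mathcal{E}_p)$ is a Zariski-locally trivial fibration with fiber $\mathrm{Flag}(n,s_p,(m_{p,i}))$, and the same holds for the iterated fiber product over all $p\in D$, whose fiber is the product of the individual flag varieties. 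Multiplicativity of the class under such fibrations then yields
\[
[\pb_{n,\mathcal{D}}^d]=[\mathrm{Bundle}_n^d]\cdot\prod_{p\in D}[\mathrm{Flag}(n,s_p,(m_{p,i}))].
\]
Finally I would substitute the value of each flag class supplied by Lemma \ref{flag1}, specializing $m=s_p$ and $r_i=m_{p,i}$, which reproduces the displayed formula verbatim.

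I expect the only genuine subtlety to be the justification of multiplicativity at the level of stacks: one must check that the class of a relative flag bundle of a vector bundle factors as base times fiber in $\hat{K_0}(\mathrm{Var})$ even when the base is an algebraic stack rather than an ordinary variety. This is precisely where the invertibility of $[\mathrm{GL}_n]$ in $\hat{K_0}(\mathrm{Var})$, noted in Section 1, is needed: the flag bundle is associated to the $\mathrm{GL}_n$-frame bundle of $\mathcal{E}_p$ via the homogeneous space $\mathrm{GL}_n/P$, so the torsor relation lets one cancel the factor $[\mathrm{GL}_n]$ and conclude the factorization. Once this multiplicativity is in place, the remaining steps are purely a matter of bookkeeping and direct substitution.
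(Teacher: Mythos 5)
Your proposal follows essentially the same route as the paper: the paper's proof likewise consists of considering the forgetful map $\pb_{n,\mathcal{D}}^d\to \mathrm{Bundle}_n^d$, identifying its fibres with the products of flag manifolds $\prod_{p\in D}\mathrm{Flag}(n,s_p,(m_{p,i}))$, and substituting the class from Lemma \ref{flag1}. Your closing paragraph on multiplicativity over a stack base (via the $\mathrm{GL}_n$-frame bundle and invertibility of $[\mathrm{GL}_n]$ in $\hat{K_0}(\mathrm{Var})$) merely makes explicit a step the paper leaves implicit; it is not a different argument.
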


\begin{rmk}
Using the argument of \cite{bgl}, Behrend and Dhillon \cite{bd} show that 
$$[\mathrm{Bundle}^d_n]=\mathbb{L}^{(n^2-1)(g-1)}\frac{[\mathrm{Pic}^0]}{\mathbb{L}-1}\prod_{i=2}^nZ(C,\mathbb{L}^{-i}),$$
where $Z(C,t):=\sum_{i\geq 0}[\mathrm{Sym}^i(C)]\mathbb{L}^i$ the zeta function of $C$ and $\mathrm{Pic}^0$ is the Jacobian of $C$.
\end{rmk}

We shall denote by $\pc^{r,\underline{d}}_{\underline{n},\underline{\mathcal{D}}}$ the moduli stack of parabolic chains of length $r$, rank $\underline{n}$, degree
$\underline{d}$ and data $\underline{\mathcal{D}}$. To show that the stack $\pc^{r,\underline{d}}_{\underline{n},\underline{\mathcal{D}}}$ is an algebraic stack, locally of finite type, we only have to 
observe that the forgetful map $\pc^{r,\underline{d}}_{\underline{n},\underline{\mathcal{D}}}\to \prod_{i=0}^r \pb_{n_i,\mathcal{D}_i}^{d_i}$ is representable. This holds, because the fibres parameterize morphisms of sheaves.

Given parabolic chains $\mathbf{E'_\bullet},\mathbf{E''_\bullet}$, we denote by $\underline{\e}(\mathbf{E''_\bullet},\mathbf{E'_\bullet})$ the stack of extensions $\mathbf{E'_\bullet}\to \mathbf{E_\bullet}\to \mathbf{E''_\bullet}$.

For parabolic chains $\mathbf{E_\bullet}^{(1)},\dots,\mathbf{E_\bullet}^{(h)}$ we denote by $\underline{\e}(\mathbf{E_\bullet}^{(h)},\dots,\mathbf{E_\bullet}^{(1)})$ the stack of iterated extensions, i.e., parabolic chains $\mathbf{E_\bullet}$ together with 
a filtration $0=\mathbf{F_\bullet}^{(0)}\subset\mathbf{F_\bullet}^{(1)}\subset\dots\subset\mathbf{F_\bullet}^{(h)}=\mathbf{E_\bullet}$ and parabolic isomorphisms $\mathbf{F_\bullet}^{(i)}/\mathbf{F_\bullet}^{(i-1)}\cong \mathbf{E_\bullet}^{(i)}$. Similarly, fixing given 
ranks $\underline{n}^i$, degrees $\underline{d}^i$ and datum $\underline{\mathcal{D}}^i$, we denote by $\underline{\e}(\underline{n}^h,\dots,\underline{n}^1)_{\underline{\mathcal{D}}^h,\dots,\underline{\mathcal{D}}^1}^{\underline{d}^h,\dots,\underline{d}^1}$ the stack 
of parabolic chains $\mathbf{E_\bullet}$ together with a filtration $\mathbf{F_\bullet}^{(i)}$ such that $$\left(\rk(\mathbf{F_\bullet}^{(i)}/\mathbf{F_\bullet}^{(i-1)}),\deg(\mathbf{F_\bullet}^{(i)}/\mathbf{F_\bullet}^{(i-1)})\right)=(\underline{n}^i,\underline{d}^i)$$ and the weights data of $\mathbf{F_\bullet}^{(i)}/\mathbf{F_\bullet}^{(i-1)}$ is $\underline{\mathcal{D}}^i$. Also 
we shall denote by 
$$\underline{\e}(\underline{n}^h,\dots,\underline{n}^1)_{\underline{\mathcal{D}}^h,\dots,\underline{\mathcal{D}}^1}^{\underline{d}^h,\dots,\underline{d}^1,\mathrm{gr}\, \underline{\alpha}-ss}\subset \underline{\e}(\underline{n}^h,\dots,\underline{n}^1)_{\underline{\mathcal{D}}^h,\dots,\underline{\mathcal{D}}^1}^{\underline{d}^h,\dots,\underline{d}^1}$$
the open sub-stack of filtered parabolic chains such that the sub-quotients $\mathbf{F_\bullet}^{(i)}/\mathbf{F_\bullet}^{(i-1)}$ are $\underline{\alpha}$-semi-stable.

Now we shall use the result of previous subsection to describe the Harder-Narasimhan strata of parabolic chains : 
\begin{prop}\label{p7}
Let $\underline{\alpha}$ be a semi-stability parameter and $(\underline{n}^i,\underline{d}^i,\underline{\mathcal{D}}^i)_{i=1,\dots,h}$ be ranks, degrees and weights datum of parabolic chains of length $r$. Suppose that 
$\alpha_j-\alpha_{j-1}\geq 2g-2$ for $j=1,\dots,r$ and $\pas_{\underline{\alpha}}(\underline{n}^i,\underline{d}^i,\underline{\mathcal{D}}^i)>\pas_{\underline{\alpha}}(\underline{n}^{i+1},\underline{d}^{i+1},\underline{\mathcal{D}}^{i+1})$ for $i=1,\dots,h-1$.
Then the forgetful map:
$$\mathrm{gr}: \underline{\e}(\underline{n}^h,\dots,\underline{n}^1)_{\underline{\mathcal{D}}^h,\dots,\underline{\mathcal{D}}^1}^{\underline{d}^h,\dots,\underline{d}^1,\mathrm{gr}\, \underline{\alpha}-ss}
\to \prod_{i=1}^h\pc_{\underline{n}^i,\underline{\mathcal{D}}^i}^{\underline{d}^i,\underline{\alpha}-ss}$$
is smooth and its fibers are affine spaces of dimension $\chi=\sum_{1\leq i<j\leq h}\chi_{ij}$, where 
\begin{multline*}
\chi_{ij}=\sum_{k=0}^r\left[n^j_k n^i_k(g-1)-n^j_k d_k^j+n^i_k d_k^j-\sum_{p\in D}\sum_{(\ell,s)\in I^{ij}_{k,p}}m^i_{k,p,\ell}m^j_{k,p,s}\right]-\\
\sum_{k=1}^h\left[n^j_k n^i_{k-1}(g-1)-n^j_k (d_{k-1}^i+|D|)+n^i_{k-1} d_k^j-\sum_{p\in D}\sum_{(\ell,s)\in \mathcal{I}^{ij}_{k,p}}m^{i}_{k-1,p,\ell}m^j_{k,p,s}\right],
\end{multline*}
where $I^{ij}_{k,p}=\{(\ell,s)\in[1,s^i_{k,p}]\times[1,s^j_{k,p}]|w_{k,p,\ell}^i<w_{k,p,s}^j\}$ and $\mathcal{I}^{ij}_{k,p}=\{(\ell,s)\in[1,s^i_{k-1,p}]\times[1,s^j_{k,p}]|w_{k-1,p,\ell}^i\leq w_{k,p,s}^j\}$.

Moreover in $\hat{K}_0(\mathrm{Var})$ we have 
$$[\underline{\e}(\underline{n}^h,\dots,\underline{n}^1)_{\underline{\mathcal{D}}^h,\dots,\underline{\mathcal{D}}^1}^{\underline{d}^h,\dots,\underline{d}^1,\mathrm{gr}\, \underline{\alpha}-ss}]
=\mathbb{L}^{\chi}\prod_{i=1}^h[\pc_{\underline{n}^i,\underline{\mathcal{D}}^i}^{\underline{d}^i,\underline{\alpha}-ss}].$$
\end{prop}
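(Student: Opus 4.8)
The plan is to analyse a single extension first and then assemble the full flag by induction on its length $h$. Fix $i<j$ and an extension $0\to\mathbf{E}_\bullet^{(i)}\to\mathbf{E}_\bullet\to\mathbf{E}_\bullet^{(j)}\to 0$ of $\underline{\alpha}$-semistable chains, governed by the complex $C^\bullet(\mathbf{E}_\bullet^{(j)},\mathbf{E}_\bullet^{(i)})$ of Proposition \ref{p6}, whose hypercohomology gives $\ho=\mathbb{H}^0$, $\e^1=\mathbb{H}^1$, and whose $\mathbb{H}^2$ is the obstruction to smoothness of the extension problem. The first step is to feed the hypotheses into Lemma \ref{keylem}: since $\mathbf{E}_\bullet^{(i)}$ and $\mathbf{E}_\bullet^{(j)}$ are semistable we have $\pas_{\underline{\alpha},\min}(\mathbf{E}_\bullet^{(i)})=\pas_{\underline{\alpha}}(\underline{n}^i,\underline{d}^i,\underline{\mathcal{D}}^i)$ and $\pas_{\underline{\alpha},\max}(\mathbf{E}_\bullet^{(j)})=\pas_{\underline{\alpha}}(\underline{n}^j,\underline{d}^j,\underline{\mathcal{D}}^j)$, so the slope ordering together with $\alpha_k-\alpha_{k-1}\geq 2g-2$ yields $\pas_{\underline{\alpha},\min}(\mathbf{E}_\bullet^{(i)})>\pas_{\underline{\alpha},\max}(\mathbf{E}_\bullet^{(j)})$, and Lemma \ref{keylem}(1) gives $\mathbb{H}^2(C^\bullet(\mathbf{E}_\bullet^{(j)},\mathbf{E}_\bullet^{(i)}))=0$. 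This vanishing is exactly what makes the iterated extension problem unobstructed, hence $\mathrm{gr}$ smooth.

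Next I would compute the relative dimension. With $\mathbb{H}^2=0$ the fibre over a pair has dimension $\dim\e^1-\dim\ho=-\chi\big(\mathbb{H}^\bullet(C^\bullet(\mathbf{E}_\bullet^{(j)},\mathbf{E}_\bullet^{(i)}))\big)$, and the long exact sequence of Proposition \ref{p6} evaluates this hyper-Euler characteristic termwise as
\[
\chi_{ij}=\chi\!\left(\textstyle\bigoplus_k\sph(E^{(j)}_k,E^{(i)}_{k-1}(D))\right)-\chi\!\left(\textstyle\bigoplus_k\ph(E^{(j)}_k,E^{(i)}_k)\right).
\]
I would then apply parabolic Riemann--Roch to each summand: the Euler characteristic of $\h(E^{(j)}_k,E^{(i)}_k)=(E^{(j)}_k)^*\otimes E^{(i)}_k$ produces the rank-product term $n^j_k n^i_k(g-1)$ and the degree terms, the passage to the parabolic subsheaf $\ph$ subtracts at each $p\in D$ the flag-incidence defect $\sum_{(\ell,s)\in I^{ij}_{k,p}}m^i_{k,p,\ell}m^j_{k,p,s}$ recording the steps forced into the next one by Definition \ref{def1} (the condition $w^i<w^j$), while the strongly parabolic summand contributes the analogous terms with the degree twisted by $|D|$, the index shifted by one (because of $E^{(i)}_{k-1}(D)$), and with the strongly parabolic incidence set $\mathcal{I}^{ij}$ (the condition $w^i\leq w^j$). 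Collecting these reproduces the displayed closed form for $\chi_{ij}$.

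For the whole flag I would induct on $h$: the stack $\underline{\e}(\underline{n}^h,\dots,\underline{n}^1)^{\dots,\mathrm{gr}\,\underline{\alpha}-ss}$ fibres over $\underline{\e}(\underline{n}^{h-1},\dots,\underline{n}^1)^{\dots}\times\pc_{\underline{n}^h,\underline{\mathcal{D}}^h}^{\underline{d}^h,\underline{\alpha}-ss}$ by forgetting the top step, and extending $\mathbf{F}_\bullet^{(h-1)}$ by $\mathbf{E}_\bullet^{(h)}$ is again governed by $C^\bullet(\mathbf{E}_\bullet^{(h)},\mathbf{F}_\bullet^{(h-1)})$; since every graded piece of $\mathbf{F}_\bullet^{(h-1)}$ has slope $\geq\pas_{\underline{\alpha}}(\underline{n}^{h-1},\dots)>\pas_{\underline{\alpha}}(\underline{n}^h,\dots)$, one again has $\pas_{\underline{\alpha},\min}(\mathbf{F}_\bullet^{(h-1)})>\pas_{\underline{\alpha},\max}(\mathbf{E}_\bullet^{(h)})$, Lemma \ref{keylem}(1) applies, the added relative dimension is $\sum_{i<h}\chi_{ih}$, and the total telescopes to $\chi=\sum_{i<j}\chi_{ij}$. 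For the class identity I would observe that the fibre of $\mathrm{gr}$ over a point is the extension stack $[\e^1/\ho]$ with $\ho$ acting additively, of class $\mathbb{L}^{\dim\e^1}\mathbb{L}^{-\dim\ho}=\mathbb{L}^{\chi_{ij}}$; as this depends only on the constant difference $\chi_{ij}$, I would stratify the base $\prod_i\pc_{\underline{n}^i,\underline{\mathcal{D}}^i}^{\underline{d}^i,\underline{\alpha}-ss}$ by the jumps of $\dim\ho$, on each stratum the sheaves $\mathcal{H}om$ and $\mathcal{E}xt^1$ being locally free so that $\mathrm{gr}$ restricts to a genuine affine fibration of class $\mathbb{L}^{\chi}$, and summing over strata gives $[\underline{\e}(\underline{n}^h,\dots,\underline{n}^1)^{\dots,\mathrm{gr}\,\underline{\alpha}-ss}]=\mathbb{L}^{\chi}\prod_i[\pc_{\underline{n}^i,\underline{\mathcal{D}}^i}^{\underline{d}^i,\underline{\alpha}-ss}]$.

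The main obstacle I anticipate is the parabolic Riemann--Roch bookkeeping: getting the two incidence sets $I^{ij}$ and $\mathcal{I}^{ij}$, the $|D|$-twist, the index shift and all signs exactly right, which is precisely where the parabolic versus strongly parabolic distinction of Definition \ref{def1} and the duality of Proposition \ref{p5} must be tracked with care. A secondary difficulty is justifying the affine-fibration and class statement when $\ho(\mathbf{E}_\bullet^{(j)},\mathbf{E}_\bullet^{(i)})$, hence the automorphisms along the fibre, jumps over the base; this is why I would route the argument through the stratification above rather than claim literal Zariski-local triviality over the whole semistable locus.
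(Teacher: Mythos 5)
Your proposal is correct and follows essentially the same route as the paper: the paper also argues by induction on $h$, forgetting the top step of the filtration, applies Lemma \ref{keylem}(1) (comparing $\pas_{\underline{\alpha},\min}$ of the sub-chain with $\pas_{\underline{\alpha},\max}$ of the quotient chain, exactly as you do) to kill $\mathbb{H}^2$ of the relative complex $C^\bullet$, and computes $\chi_{ij}$ the same way, namely Riemann--Roch for $\ho$ corrected by the Euler characteristics of the skyscraper quotients $\ho/\ph$ and $\ho/\sph$ supported on $D$, which is where the incidence sets $I^{ij}$ and $\mathcal{I}^{ij}$ come from (the paper cites \cite[Lemma 2.4]{bh} for these counts). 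The only point of divergence is how the jumping of $\dim\ho$ over the base is handled for the class identity: you stratify by $\dim\ho$ and sum over strata using additivity in $\hat{K}_0(\mathrm{Var})$, whereas the paper, following \cite[Proposition 3.1 and Corollary 3.2]{ghs}, represents the relative hypercohomology complex globally by a complex of vector bundles $\mathcal{F}_0\to\mathcal{F}_1\to\mathcal{F}_2$, uses the fibrewise vanishing of $\mathbb{H}^2$ to truncate it to $\mathcal{F}_0\to\ker(d_1)$ with $\ker(d_1)$ locally free, and identifies the extension stack with the vector bundle stack $[\ker(d_1)/\mathcal{F}_0]$; this yields smoothness, the affine-space fibres, and the factor $\mathbb{L}^{\chi}$ in one stroke over the whole semistable locus. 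Both routes are valid; note only that your stratified version still needs, stratum by stratum, the identification of the relative extension stack with the quotient of $R^1$ by the additive (trivial) action of $R^0$, which is the same content as the cited result from \cite{ghs}, so the stratification buys no real simplification there.
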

\begin{proof}
We shall prove this proposition by induction on $h$. If $h=1$ then there is nothing to prove.

Now, we consider $h\geq 2$ and the forgetful map 
{\small$$\mathrm{gr}_h:\underline{\e}(\underline{n}^{h-1},\dots,\underline{n}^1)_{\underline{\mathcal{D}}^{h},\dots,\underline{\mathcal{D}}^1}^{\underline{d}^{h},\dots,\underline{d}^1,\mathrm{gr}\, \underline{\alpha}-ss}\to \underline{\e}(\underline{n}^{h-1},\dots,\underline{n}^1)_{\underline{\mathcal{D}}^{h-1},\dots,\underline{\mathcal{D}}^1}^{\underline{d}^{h-1},\dots,\underline{d}^1,\mathrm{gr}\, \underline{\alpha}-ss}\times\pc_{\underline{n}^h,\underline{\mathcal{D}}^h}^{\underline{d}^h,\underline{\alpha}-ss}.$$}

We denote by $\mathrm{pr}_{ij}$ the projection from $$\underline{\e}(\underline{n}^{h-1},\dots,\underline{n}^1)_{\underline{\mathcal{D}}^{h-1},\dots,\underline{\mathcal{D}}^1}^{\underline{d}^{h-1},\dots,\underline{d}^1,\mathrm{gr}\, \underline{\alpha}-ss}\times \pc_{\underline{n}^h,\underline{\mathcal{D}}^h}^{\underline{d}^h,\underline{\alpha}-ss}\times C$$ onto 
the product of the $i$-th and $j$-th factor. Denote by $E'_{\bullet,\mathrm{univ}}, E^h_{\bullet,\mathrm{univ}} $ the universal parabolic chains on $\underline{\e}(\underline{n}^{h-1},\dots,\underline{n}^1)_{\underline{\mathcal{D}}^{h-1},\dots,\underline{\mathcal{D}}^1}^{\underline{d}^{h-1},\dots,\underline{d}^1,\mathrm{gr}\, \underline{\alpha}-ss}\times C$ and $\pc_{\underline{n}^h,\underline{\mathcal{D}}^h}^{\underline{d}^h,\underline{\alpha}-ss}\times C$.

Using the condition that $\pas_{\underline{\alpha}}(\underline{n}^i,\underline{d}^i,\underline{\mathcal{D}}^i)>\pas_{\underline{\alpha}}(\underline{n}^{i+1},\underline{d}^{i+1},\underline{\mathcal{D}}^{i+1})$ for $i=1,\dots,h-1$, we have $\pas_{\mathrm{min}}(E'_{\bullet,\mathrm{univ}})>\pas{\mathrm{max}}(E^h_{\bullet,\mathrm{univ}})$. By the first assertion of Lemma \ref{keylem}, we get that :
\begin{equation}\label{eqn3}
R\mathrm{pr}_{12,*}(C^\bullet(\mathrm{pr}_{23}^*E^h_{\bullet,\mathrm{univ}},\mathrm{pr}_{13}^*E'_{\bullet,\mathrm{univ}}))
\end{equation}

where $$C^\bullet(\mathbf{E''}_\bullet,\mathbf{E'}_\bullet):\oplus_{i}\ph(E''_i,E'_i)\buildrel c\over{\rightarrow} \oplus_{i}\sph(E''_i,E'_{i-1}(D));$$
is a complex with cohomology only in degree $0,1$. 

The proof of \cite[Corollary 3.2]{ghs} is still true for parabolic bundles with a small modification by taking $\ph$ or $\sph$ instead of $\h$ and the closed points $p\in C-D$. The dimension of forget full map now is the Euler characteristic of the sheaf $\ph$ or of the sheaf $\sph$ . Using this point of view we see that the complex (\ref{eqn3}) can be represented by a complex of vector bundles $\mathcal{F}_0\buildrel d_0\over{\rightarrow}\mathcal{F}_1\buildrel d_1\over{\rightarrow}\mathcal{F}_2$. Its cohomology is only in degree 0,1 implies that it is quasi-isomorphic to $\mathcal{F}_0\to \ker(d_1)$. Using the \cite[Proposition 3.1]{ghs}, we have that the vector bundle stack $[\ker(d_1)/\mathcal{F}_0]$ is isomorphic to
$\underline{\e}(\underline{n}^{h-1},\dots,\underline{n}^1)_{\underline{\mathcal{D}}^{h-1},\dots,\underline{\mathcal{D}}^1}^{\underline{d}^{h-1},\dots,\underline{d}^1,\mathrm{gr}\, \underline{\alpha}-ss}$. 

Let $E$, $F$ two parabolic bundles of weight datum $\mathcal{D}^E=(\underline{s}^E,\underline{w}^E,\underline{m}^E)$ and $\mathcal{D}^F=(\underline{s}^F,\underline{w}^F,\underline{m}^F)$. We have in mind that there are natural skyscraper sheaves $\oplus_{p\in D}\mathcal{K}_p$ and $\oplus_{p\in D}\mathcal{SK}_p$ supported on the points of $D$ such that  
$$0\to \ph(E,F)\to \ho(E,F)\to \bigoplus_{p\in D}\mathcal{K}_p\to 0,$$
and $$0\to \sph(E,F)\to \ho(E,F)\to \bigoplus_{p\in D}\mathcal{SK}_p\to 0$$
are a short exact sequence of sheaves. Using the proof of \cite[Lemma 2.4]{bh}, we obtain the formula for the Euler characteristic of $\mathcal{K}_p$ and $\mathcal{SK}_p$. 
More precisely, we have 
$$\chi({\mathcal{K}_p})=\sum_{(i,j)\in \{1\leq i\leq s^E_p,1\leq j\leq s^F_p|w_{p,i}^E>w_{p,j}^F\}}m^E_{p,i}m^F_{p,j},$$
and 
$$\chi({\mathcal{SK}_p})=\sum_{(i,j)\in \{1\leq i\leq s^E_p,1\leq j\leq s^F_p|w_{p,i}^E\geq w_{p,j}^F\}}m^E_{p,i}m^F_{p,j}.$$

Using the Riemann-Roch formula, we have $$\chi(\ho(E,F))=\rk(E)\rk(F)(g-1)+\rk(E)\deg(F)-\rk(F)\deg(E).$$
Thus we can calculate the Euler characteristic of $\ph(E,F)$ and $\sph(E,F)$ as follows :
$$\chi(\ph(E,F))=\chi(\ho(E,F))-\sum_{p\in D}\chi(\mathcal{K}_p),$$ 
$$\chi(\sph(E,F))=\chi(\ho(E,F)-\sum_{p\in D}\chi({\mathcal{SK}_p}).$$

Applying these formulas into our situation, we obtain the formula for the dimension of the fibres of $\mathrm{gr}_{h}$ is $\sum_{j=1}^{h-1}\chi_{jh}$. 

Using the hypothesis of induction, the result is claimed.
  
\end{proof}
\subsection{Parabolic chains of rank $(n,\dots,n)$}
In this section we give an inductive formula for the stacks of parabolic chains $\underline{\alpha}$- semistable of rank $(n,\dots,n)$, degree $\underline{d}$ and of weight data $\underline{\mathcal{D}}$ in the special case when $d_{i-1}-d_i+2n|D|<\alpha_i-\alpha_{i-1}$.

Before going further, we need to recall some results of \cite[Section 3]{ghs} which we shall use in our calculation.

For any family of vector bundle $E$ of rank $n$ parameterized by a scheme of finite type (or stack of finite type with affine stabilizer groups) $T$, we have:
\begin{prop}(cf. \cite[Proposition 3.6]{ghs})\label{flag2}
The class of the stack $\mathrm{Hecke}(E/T)^\ell$ classifying modification $E'\subset E$ with $E/E'$ a torsion sheaf of length $\ell$ is:
$$[\mathrm{Hecke}(E/T)^\ell]=[T]\times [(C\times \mathbb{P}^{n-1})^{(\ell)}].$$ 
\end{prop}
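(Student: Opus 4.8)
The first step is to identify $\mathrm{Hecke}(E/T)^\ell$ with the relative Quot scheme $\mathrm{Quot}^\ell(E/T)$ of torsion quotients of length $\ell$: a modification $E'\subset E$ is exactly the kernel of a surjection $E\twoheadrightarrow Q$ onto a length-$\ell$ torsion sheaf, and since $E'$ is a subsheaf of the fixed $E$ there are no automorphisms, so the object is a scheme. The case $\ell=1$ sets the pattern: a length-$1$ quotient is a point $c\in C$ together with a one-dimensional quotient of the fibre $E_c$, so $\mathrm{Hecke}(E/T)^1=\mathbb{P}(E^*)$ is a Zariski-locally trivial $\mathbb{P}^{n-1}$-bundle over $C\times T$ and hence has class $[C\times T]\cdot[\mathbb{P}^{n-1}]=[T]\cdot[C\times\mathbb{P}^{n-1}]$. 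Note that this already produces the factor $[T]$ without trivialising $E$, because it uses only the local triviality of the projectivisation and the splitting $[C\times T]=[C]\cdot[T]$. For the general case I would run the whole computation relatively over $T$: the relative Quot scheme is representable over $T$ and is cut out inside projectivisations of bundles pulled back from $C\times T$, so the same mechanism makes $[T]$ factor out at every stage and reduces us to $T=\mathrm{pt}$ with $E$ an arbitrary rank-$n$ bundle (the answer being, a posteriori, independent of $E$).

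The engine of the computation is the canonical support decomposition $Q=\bigoplus_c Q_c$ of a torsion sheaf into its localisations, which turns a surjection $E\twoheadrightarrow Q$ into a finite unordered collection of surjections onto sheaves supported at distinct points, with lengths summing to $\ell$. Stratifying by the support map $\mathrm{supp}\colon\mathrm{Quot}^\ell(E)\to C^{(\ell)}$ then expresses the generating series $\sum_{\ell\ge0}[\mathrm{Quot}^\ell(E)]\,t^\ell$ as the $[C]$-th power, in the power structure on $\hat{K_0}(\mathrm{Var})$, of the punctual series $P(t):=\sum_{j\ge0}[\mathrm{Quot}^j_{\mathrm{pt}}]\,t^j$, where $\mathrm{Quot}^j_{\mathrm{pt}}$ parameterises length-$j$ quotients supported at a single point; this factor depends only on $n$ and $j$, since it sees only the completed local ring. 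Concretely this is the motivic Euler product $P(t)^{[C]}$, the analogue of $\prod_{c\in C}P(t^{\deg c})$, whose convergence is exactly what the completion $\hat{K_0}(\mathrm{Var})$ is designed to guarantee.

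It remains to compute $P(t)$ and to recognise the outcome. I would show $P(t)=\prod_{i=0}^{n-1}(1-\mathbb{L}^i t)^{-1}$ by stratifying length-$j$ quotients of the free module over the local ring according to their successive one-dimensional fibre quotients, the motivic lift of the classical lattice count, whose point-count shadow $\prod_{i=0}^{n-1}(1-q^it)^{-1}$ already encodes $|\mathbb{P}^{n-1}(\mathbb{F}_q)|=1+q+\dots+q^{n-1}$ in its linear coefficient. Feeding this into the power structure and using its axioms together with $[\mathrm{Sym}^j\mathbb{A}^i]=\mathbb{L}^{ij}$, equivalently $Z(\mathbb{A}^i,t)=(1-\mathbb{L}^it)^{-1}$, and $[\mathbb{P}^{n-1}]=\sum_{i=0}^{n-1}\mathbb{L}^i$, gives
\[
\sum_{\ell\ge0}[\mathrm{Quot}^\ell(E)]\,t^\ell=\prod_{i=0}^{n-1}Z(C\times\mathbb{A}^i,t)=Z(C\times\mathbb{P}^{n-1},t)=\sum_{\ell\ge0}[(C\times\mathbb{P}^{n-1})^{(\ell)}]\,t^\ell,
\]
where the middle identity is the additivity $Z(X\sqcup Y,t)=Z(X,t)Z(Y,t)$ applied to the affine cells of $\mathbb{P}^{n-1}$. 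Comparing the coefficient of $t^\ell$ and restoring the factor $[T]$ yields the proposition.

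The main obstacle is twofold. The first is the punctual computation of $P(t)$, i.e. assembling the classes of the punctual Quot schemes into $\prod_{i=0}^{n-1}(1-\mathbb{L}^it)^{-1}$. The second, more delicate, is the justification that these symmetric-power manipulations are legitimate at the level of classes: that $\mathrm{Sym}^\ell$ is well defined and multiplicative on the classes occurring here (polynomials in $\mathbb{L}$ and in the $[C^{(i)}]$), that the power-structure identities $\big(\prod_i A_i\big)^{[C]}=\prod_i A_i^{[C]}$ and $(Z(\mathbb{A}^i,t))^{[C]}=Z(C\times\mathbb{A}^i,t)$ hold, and that the Euler product converges in the filtration defining $\hat{K_0}(\mathrm{Var})$. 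Both points are where I would expect the argument to require the most care, and both are precisely the facts that the framework of \cite{ghs} makes available.
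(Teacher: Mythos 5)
The paper never proves this proposition: it is imported from \cite[Proposition 3.6]{ghs} (hence the ``cf.'' in the statement), and the text proceeds directly to the parabolic refinement in Corollary \ref{corhecke}. So there is no in-paper proof to measure your argument against; what can be assessed is whether your argument stands on its own. In outline it does, and it is the standard route: identify $\mathrm{Hecke}(E/T)^\ell$ with the relative Quot scheme of length-$\ell$ torsion quotients, stratify by the support cycle in $C^{(\ell)}$, compute the punctual generating series $\prod_{i=0}^{n-1}(1-\mathbb{L}^i t)^{-1}$ by a lattice/cell decomposition, and reassemble via $Z(X\sqcup Y,t)=Z(X,t)Z(Y,t)$ and Totaro's lemma $[(X\times \mathbb{A}^1)^{(\ell)}]=\mathbb{L}^{\ell}[X^{(\ell)}]$ to recognise $Z(C\times\mathbb{P}^{n-1},t)$. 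Every identity you invoke is true, and this scheme of proof (G\"ottsche-style, via motivic zeta functions) does yield the proposition.

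Two steps, however, are asserted exactly where the real content lies. First, your stated mechanism for factoring out $[T]$ --- that the relative Quot scheme ``is cut out inside projectivisations of bundles pulled back from $C\times T$'' --- proves nothing: a closed subscheme of a projective bundle over $T$ need not have class divisible by $[T]$, and for $\ell\geq 2$ the Quot scheme is not a tower of projective bundles, so the $\ell=1$ mechanism does not propagate by itself. What does work is to make the support stratification relative: each stratum of $\mathrm{Hecke}(E/T)^\ell$ fibres over (stratum of $C^{(\ell)}$)$\times T$, and this fibration is Zariski-locally trivial because $E$ is Zariski-locally trivial on $C\times T$ and a punctual modification depends only on the restriction of $E$ to a formal neighbourhood of its support; only after this observation do $[T]$ and the punctual fibre classes split off. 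Second, the power-structure step conceals a symmetric-group issue: over the stratum of $C^{(\ell)}$ of a given partition type, the fibration is twisted by the finite group permuting points of equal punctual type, and splitting its class requires that the fibres --- polynomials in $\mathbb{L}$ coming from cell decompositions --- behave well under such quotients, which is precisely what the Totaro/G\"ottsche-type lemmas guarantee. You flag both difficulties yourself and both are covered by known results, so the proposal is correct in substance; but as written these two points are placeholders, and they are where a self-contained proof would have to do its actual work.
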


Now we shall adapt this proposition in our situation. For any of parabolic vector bundle of rank $n$ and of weight data $\mathcal{D}$ parameterized by $T$ (like as above), we shall write $\mathrm{PHecke}(E/T)^\ell_{\mathcal{D}'}$ for the stack classifying modifications $E'\subset E$ with $E'$ a parabolic sub-bundle of weight data $\mathcal{D}'$ and $E/E'$ a torsion sheaf of length $\ell$. 

\begin{cor}\label{corhecke}
We have: 
\begin{eqnarray*}
[\mathrm{PHecke}(E/T)^\ell_{\mathcal{D}'}]&=&[\mathrm{Hecke}(E/T)^\ell]\times\prod_{p\in D}[\mathrm{Flag}(n,s_p,m'_{p,i})]\\
&=&[T]\times [(C\times \mathbb{P}^{n-1})^{(\ell)}] \times \\
&&\prod_{p\in D}\frac{\prod_{i=0}^{n-1}(\mathbb{L}^n-\mathbb{L}^i)}{\left[\prod_{i=1}^{s_p}\prod_{j=0}^{m'_{p,i}-1}(\mathbb{L}^{m'_{p,i}}-\mathbb{L}^j)\right]\left[\mathbb{L}^{\sum_{1\leq i<j\leq s_p}m'_{p,i}m'_{p,j}}\right]}.
\end{eqnarray*}
\end{cor}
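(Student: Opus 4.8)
The plan is to factor the parabolic modification data into its underlying bare modification and the flag data at the marked points, realizing $\mathrm{PHecke}(E/T)^\ell_{\mathcal{D}'}$ as a flag bundle over $\mathrm{Hecke}(E/T)^\ell$. Concretely, I would first define the forgetful morphism
$$\pi:\mathrm{PHecke}(E/T)^\ell_{\mathcal{D}'}\to \mathrm{Hecke}(E/T)^\ell$$
sending a parabolic modification (that is, an inclusion $E'\subset E$ together with the flags on the fibres $E'_p$ for $p\in D$) to the bare modification $E'\subset E$. Since $E'$ is a vector bundle of rank $n$ (a modification of $E$ with torsion quotient is again locally free of the same rank), each fibre $E'_p$ at a marked point is an honest $n$-dimensional vector space, independently of whether the support of $E/E'$ meets $D$. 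Hence the fibre of $\pi$ over a point $E'\subset E$ is exactly the space of choices of partial flags of type $(n,s_p,(m'_{p,i}))$ in $E'_p$ for each $p\in D$, i.e.\ $\prod_{p\in D}\mathrm{Flag}(n,s_p,(m'_{p,i}))$.

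Next I would promote this fibrewise description to a global one: letting $\mathcal{E}'$ denote the universal sub-bundle on $\mathrm{Hecke}(E/T)^\ell\times C$, its restriction to each $\{p\}\times \mathrm{Hecke}(E/T)^\ell$ is a rank-$n$ vector bundle, and $\mathrm{PHecke}(E/T)^\ell_{\mathcal{D}'}$ is the fibre product over $\mathrm{Hecke}(E/T)^\ell$ of the associated partial flag bundles. Such flag bundles are Zariski-locally trivial, since a local frame of $\mathcal{E}'|_p$ trivializes the associated $\mathrm{GL}_n/P$-bundle, so $\pi$ is a Zariski-locally trivial fibration with fibre $\prod_{p\in D}\mathrm{Flag}(n,s_p,(m'_{p,i}))$. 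Multiplicativity of the class under such fibrations then gives
$$[\mathrm{PHecke}(E/T)^\ell_{\mathcal{D}'}]=[\mathrm{Hecke}(E/T)^\ell]\times\prod_{p\in D}[\mathrm{Flag}(n,s_p,(m'_{p,i}))].$$
Finally I substitute $[\mathrm{Hecke}(E/T)^\ell]=[T]\times[(C\times\mathbb{P}^{n-1})^{(\ell)}]$ from Proposition \ref{flag2} and the explicit flag-variety class from Lemma \ref{flag1}, obtaining the stated formula.

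I expect the main obstacle to be the bookkeeping at marked points where the support of the torsion quotient $E/E'$ meets $D$: one has to check that the flag data for $\mathcal{D}'$ is genuinely imposed on the fibre $E'_p$ of the sub-bundle, not on $E_p$, and that the multiplicities $(m'_{p,i})$ entering the flag variety are those of $\mathcal{D}'$. Once it is observed that $E'$ remains locally free of rank $n$, so that $\dim E'_p=n$ for every $p\in D$, this subtlety disappears and the fibre is uniformly $\prod_{p\in D}\mathrm{Flag}(n,s_p,(m'_{p,i}))$. The remaining point requiring a word of justification is the Zariski-local triviality of the flag bundle (equivalently, that the computation may be carried out in $\hat{K_0}(\mathrm{Var})$ using that $[\mathrm{GL}_n]$ is invertible), which is standard and parallels the argument already used for the corollary computing $[\pb_{n,\mathcal{D}}^d]$.
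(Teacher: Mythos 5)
Your proposal follows exactly the paper's own argument: the paper likewise considers the forgetful map $\mathrm{PHecke}(E/T)^\ell_{\mathcal{D}'}\to \mathrm{Hecke}(E/T)^\ell$, identifies its fibres as the products of flag varieties $\prod_{p\in D}\mathrm{Flag}(n,s_p,(m'_{p,i}))$, and concludes by combining Proposition \ref{flag2} with Lemma \ref{flag1}. Your additional remarks (that $E'$ stays locally free of rank $n$ so the fibre is uniform over points where the torsion support meets $D$, and that the flag bundle is Zariski-locally trivial) only make explicit steps the paper leaves implicit, so the proof is correct and essentially identical to the paper's.
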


\begin{proof}
We consider the forgetful map $\mathrm{PHecke}(E/T)^\ell_{\mathcal{D'}}\to \mathrm{Hecke}(E/T)^\ell$. The fibres of this map are products of flag manifolds $\prod_{p\in D}\mathrm{Flag}(n,s_p,m'_{p,i})$. The claim is a consequent of the Proposition \ref{flag2} and the lemma \ref{flag1}. 
\end{proof}

Given $\underline{n},\,\underline{d},\,\underline{\mathcal{D}}$ and $\underline{l}$, let us denote by $\pc_{\underline{n},\underline{\mathcal{D}}}^{\underline{d},\mathrm{inj}}$ the stack of parabolic chains of rank $\underline{n}$, degree $\underline{d}$ and weight data $\underline{\mathcal{D}}$, such that all maps $\phi_i$ are injective.

\begin{prop}\label{p10}
Fix $n,r\in \mathbb{N}$ and write $\underline{n}=(n,\dots,n)$. Fix a degree $\underline{d}$ 
and $\underline{\alpha}$ a semi-stability parameter. Suppose that for all $i>0$ we have $d_{i-1}-d_i+2n|D|<\alpha_i-\alpha_{i-1}$, then :
\begin{enumerate}
\item For any $\underline{\alpha}$-semi-stable parabolic chain of rank $\underline{n}$ and degree $\underline{d}$ all maps $\phi_i$ are injective, i.e. , $\pc_{\underline{n},\underline{\mathcal{D}}}^{\underline{d},\underline{\alpha}-ss}\subset \pc_{\underline{n},\underline{\mathcal{D}}}^{\underline{d},\mathrm{inj}}$.
\item Suppose $\mathbf{E^r_\bullet}\in \pc_{\underline{n},\underline{\mathcal{D}}}^{\underline{d},\mathrm{inj}}$ is a parabolic chain with HN-filtration $\mathbf{E^r_\bullet}^{(1)}\subset \dots \subset \mathbf{E^r_\bullet}^{(h)}=\mathbf{E^r_\bullet}$. Then for any $i$ we have $\rk(\mathbf{E^r_\bullet}^{(i)}/\mathbf{E^r_\bullet}^{(i-1)})=(m_i,\dots, m_i)$ for some $m_i\in \mathbb{N}$.
\end{enumerate} 
\end{prop}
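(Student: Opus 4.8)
The plan is to turn a hypothetical ``bad'' feature (a non-injective $\phi_i$ in (1), a rank drop in the Harder--Narasimhan filtration in (2)) into an explicit pair of sub- and quotient-chains whose $\underline{\alpha}$-slopes, once compared, force $\alpha_i-\alpha_{i-1}\le d_{i-1}-d_i+2n|D|$, contradicting the hypothesis. For (1), suppose $\phi_i\colon E_i\to E_{i-1}(D)$ is not injective; let $N\subset E_i$ be the saturated kernel, of rank $k\ge 1$, and $M\subset E_{i-1}$ the saturated subbundle of rank $n-k$ with $M(D)\supset\mathrm{Im}(\phi_i)$. First I would observe that $(0,\dots,0,N,0,\dots,0)$, with $N$ in degree $i$, is a sub-chain since $\phi_i(N)=0$, so semi-stability gives $\pas(N)+\alpha_i\le\pas_{\underline{\alpha}}(\mathbf{E}^{r}_{\bullet})$. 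Next, $(E_0,\dots,E_{i-2},M,E_i,\dots,E_r)$ is a sub-chain: the only nontrivial condition $\phi_i(E_i)=\mathrm{Im}(\phi_i)\subset M(D)$ holds by construction and no other map changes, and its complementary quotient is $E_{i-1}/M$ in degree $i-1$, so Lemma \ref{lem3}(1) gives $\pas_{\underline{\alpha}}(\mathbf{E}^{r}_{\bullet})\le\pas(E_{i-1}/M)+\alpha_{i-1}$.

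Subtracting the two inequalities makes the unknown $\pd(N)$ cancel: both $N$ and $E_{i-1}/M$ have rank $k$, and $\pd M\ge\pd(\mathrm{Im}\,\phi_i)-(n-k)|D|=\pd(E_i)-\pd(N)-(n-k)|D|$ (up to bounded weight corrections), whence
$$\alpha_i-\alpha_{i-1}\ \le\ \frac{\pd(E_{i-1})-\pd(M)-\pd(N)}{k}\ \le\ \frac{\pd(E_{i-1})-\pd(E_i)+(n-k)|D|}{k}.$$
Bounding $\pd(E_{i-1})-\pd(E_i)\le d_{i-1}-d_i+n|D|$ (the parabolic weights lie in $[0,1)$) and using $k\ge 1$ yields $\alpha_i-\alpha_{i-1}<d_{i-1}-d_i+2n|D|$, the desired contradiction.

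For (2), injectivity of $\mathbf{E}^{r}_{\bullet}$ means the restriction of each $\phi_j$ to any sub-chain stays injective, so every sub-chain has non-increasing ranks $\rk F_0\ge\dots\ge\rk F_r$. Apply this to $\mathbf{G}:=\mathbf{E}^{r}_{\bullet}{}^{(1)}$, the maximal destabilizing sub-chain, which is $\underline{\alpha}$-semi-stable of slope $\mu_{\max}=\pas_{\underline{\alpha},\max}(\mathbf{E}^{r}_{\bullet})$. If its ranks $g_\bullet$ are not constant, choose $i$ with $g_i<g_{i-1}$ and make two local modifications: shrink degree $i-1$ by replacing $G_{i-1}$ with $\check{G}_{i-1}:=\mathrm{sat}(\phi_i(G_i)(-D))\subseteq G_{i-1}$ (rank $g_i$), giving a sub-chain of $\mathbf{G}$ with quotient $G_{i-1}/\check{G}_{i-1}$; and enlarge degree $i$ by replacing $G_i$ with $\hat{G}_i:=\mathrm{sat}(\phi_i^{-1}(G_{i-1}(D)))\subseteq E_i$ (rank $g_{i-1}$), giving a sub-chain of $\mathbf{E}^{r}_{\bullet}$ containing $\mathbf{G}$ with quotient $\hat{G}_i/G_i$. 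Semi-stability of $\mathbf{G}$ gives $\pas(G_{i-1}/\check{G}_{i-1})+\alpha_{i-1}\ge\mu_{\max}$, while maximality of $\mu_{\max}$ over all sub-chains gives $\pas(\hat{G}_i/G_i)+\alpha_i\le\mu_{\max}$. Subtracting, the internal degrees $\pd(G_{i-1})$ and $\pd(G_i)$ cancel exactly as in (1); the estimate $\pd(\hat{G}_i)\ge\pd(G_{i-1})-\ell_i$, where $\ell_i=\mathrm{length}(\mathrm{coker}\,\phi_i)=d_{i-1}-d_i+n|D|$ is finite because $\phi_i\colon E_i\hookrightarrow E_{i-1}(D)$ is a full-rank injection of the ambient chain, together with $\pd(\check{G}_{i-1})\ge\pd(G_i)-g_i|D|$, then forces $\alpha_i-\alpha_{i-1}\le d_{i-1}-d_i+2n|D|$, a contradiction. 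Hence $\mathbf{G}$ has constant rank $(m_1,\dots,m_1)$; the quotient $\mathbf{E}^{r}_{\bullet}/\mathbf{G}$ is again injective of constant rank $(n-m_1,\dots,n-m_1)$, still satisfies the gap hypothesis for its own degrees (checked via condition (2) of Proposition \ref{p4} applied to the rank-constant injective $\mathbf{G}$), and has $\mathbf{E}^{r}_{\bullet}{}^{(2)}/\mathbf{E}^{r}_{\bullet}{}^{(1)}$ as its top Harder--Narasimhan piece, so the statement follows by induction on the rank.

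The routine-but-delicate heart of both arguments, and the step I expect to be the main obstacle, is the parabolic bookkeeping: I must track how $\pd$ behaves under the twist by $\mathcal{O}(\pm D)$, under saturation, and under the induced parabolic structures on kernels, images and quotients of the strongly parabolic maps $\phi_i$. Strong parabolicity is precisely what confines each flag-step weight correction to $[0,1)$, so the total weight discrepancy is at most $n|D|$; combined with the $n|D|$ coming from $\deg\mathcal{O}(D)$ (equivalently the cokernel length $\ell_i$), this is what pins down the exact constant $2n|D|$ appearing in the hypothesis. The only other point needing care is verifying that the gap inequality descends to the quotient chain in the induction, which is where the constant-rank injectivity of $\mathbf{E}^{r}_{\bullet}{}^{(1)}$ is used.
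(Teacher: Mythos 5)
Your proposal is correct and takes essentially the same approach as the paper: for (1) you use the same two sub-chains (the kernel of $\phi_i$ concentrated in degree $i$, and the chain with $E_{i-1}$ replaced by the twisted-down image of $\phi_i$), and for (2) the same comparison at a rank-drop index between the quotient $G_{i-1}/\check{G}_{i-1}$ of the destabilizing sub-chain and the kernel of the induced map on the quotient chain, with both contradictions obtained from the bound $d_j\le \pd(E_j)< d_j+n|D|$ exactly as in the paper. The only cosmetic differences are your explicit use of saturations and, in (2), your induction on rank via the quotient chain $\mathbf{E^r_\bullet}/\mathbf{E^r_\bullet}^{(1)}$ (requiring the easy check that the degree-gap hypothesis descends) instead of the paper's direct treatment of an arbitrary Harder--Narasimhan step.
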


\begin{proof}
To show (1) suppose $\mathbf{E^r_\bullet}$ was a $\underline{\alpha}$-semi-stable with $n>\rk(\ker(\phi_i))=m>0$ for some $i$. Then $$(0,\dots,0,\ker(\phi_i),0,\dots,0)$$ and $$(E_r,\dots,E_i,(E_i/\ker(\phi_i))(-D),E_{i-2},\dots,E_0)$$ are parabolic sub-chains of $\mathbf{E^r_\bullet}$. Thus by using the condition of semi-stability we have :
$$(r+1)n.\pd(\ker(\phi_i))\leq m\sum_{j=0}^r(\pd(E_j)+n\alpha_j)-m(r+1)n\alpha_i$$
and 
\begin{multline*}
m\sum_{j=0}^r(\pd(E_j)+n\alpha_j)+ (r+1)n(\pd(E_i)-\pd(E_{i-1})-m\alpha_{i-1}-(n-m)|D|)\\
\leq (r+1)n.\pd(\ker(\phi_i)).
\end{multline*}
This implies 
$$\alpha_i-\alpha_{i-1}\leq \frac{\pd(E_{i-1})-\pd(E_{i})}{m}+\frac{n-m}{m}|D|.$$

Remark that by definition, the weights of parabolic bundles are belong to $(0,1)$ and the sum of the multiplicity at a point $p$ is equal to the rank of bundle, so we have $d_i<\pd(E_i)<d_i+n|D|\,\forall i$. Using this in the inequality above, we get a contradiction. 

We shall prove (2) by induction on $h$. Suppose $\mathbf{E'^{r}_\bullet}=\mathbf{E^{r}_\bullet}^{(i)}\subset \mathbf{E^r_\bullet}$ was a destabilizing parabolic sub-chain such that not all $E^{(i)}_j $ have equal rank. We shall denote by $\mathbf{E''^{r}_\bullet}=\mathbf{E^{r}_\bullet}/\mathbf{E'^{r}_\bullet}$ the quotient parabolic chain.

By assumption all maps $\phi_j^{(i)}$ are injective, so that $n'_{j}\leq n'_{j-1}\leq n\,\forall j\in\{1,\dots,r\}$. Let $j$ be the minimal integer such that $ n'_{j}< n'_{j-1}$.

Then $\mathbf{K^r_\bullet}:=(0,\dots,O,\ker(\phi''_j),0,\dots,0)$ is a parabolic sub-chain of $\mathbf{E''^{r}_\bullet}$ and $\mathbf{Q_\bullet^r}:=(0,\dots,0,E'_{j-1}/E'_j(-D),0,\dots,0)$ is a quotient of $\mathbf{E'^{r}_\bullet}$. Thus we have:
$$\pas(\mathbf{Q^{r}_\bullet})\geq\pas_{\min}(\mathbf{E'^{r}_\bullet})>\pas_{\max}(\mathbf{E''^{r}_\bullet})\geq \pas(\mathbf{K_\bullet}),$$
i.e,
\begin{eqnarray*}
&&\frac{\pd(E'_{j-1})-\pd(E'_j)+n'_j|D|}{n'_{j-1}-n_j}+\alpha_{j-1}>\pas(\mathbf{K_\bullet})\geq \\
&&\quad\quad\quad \quad \quad\quad\quad\quad\quad\quad\frac{\pd(E''_j)-\pd(E''_{j-1})-|D|n''_{j-1}}{n''_j-n''_{j-1}}+\alpha_j\\
&\Rightarrow &\frac{\pd(E'_{j-1})-\pd(E'_j)+n'_j|D|}{n'_{j-1}-n_j}+\alpha_{j-1}>\\
&&\frac{\pd(E_j)-\pd(E_{j-1})+\pd(E_{j-1})-\pd(E_{j})-|D|(n-n'_{j-1})}{n'_{j-1}-n'_{j}}+\alpha_j\\
&\Rightarrow &\frac{\pd(E_{j-1})-\pd(E_j)-|D|(n-n'_{j-1})}{n'_{j-1}-n'_{j}}>\alpha_j-\alpha_{j-1}.
\end{eqnarray*}

By using the remark above, we get again a contradiction.  
\end{proof}

This proposition allows us to have the following recursion formula for the motive of $\pc_{\underline n,\mathcal{D}}^{\underline{d},\underline{\alpha}-ss}$ in the special cases where $d_{i-1}-d_i+2n|D|<\alpha_{i}-\alpha_{i-1}$ and $\underline{n}=(n,\dots,n)$.
\begin{cor}\label{cors}
Let $\underline{n}=(n,\dots,n)$ be constant. If $\underline{\alpha},\underline{d}$ satisfy $n|D|\leq d_{i-1}-d_i+2n|D|<\alpha_{i}-\alpha_{i-1}$ for all $i\geq 1$ then we have 
\begin{multline*}
[\pc_{\underline n,\underline{\mathcal{D}}}^{\underline{d},\underline{\alpha}-ss}]=[\mathrm{PBundle}_{n}^{d_0}]\prod_{i=1}^r\left([(C\times \mathbb{P}^{n-1})^{(d_{i-1}-d_i+n|D|)}]\prod_{p\in D}[\mathrm{Flag}(n,s_{i,p},m_{i,p,j}]\right)\\
-\left(\sum_{\underline{m},\underline{e},\underline{k},\underline{\mathcal{D}}^\bullet}\mathbb{L}^{\sum_{k<j}\chi_{kj}}\prod_{j}[\pc_{\underline{m}_j,\underline{\mathcal{D}}^{(j)}}^{\underline{e}^{(j)},\underline{\alpha}-ss}]\right) 
\end{multline*}
where the sum runs over all partitions $n=\sum_{j=1}^h m_j, \underline{d}=\sum_j\underline{e}^{(j)}$ and $\underline{\mathcal{D}}=\sum_{j}\underline{\mathcal{D}}^{(j)}$ (here we use the trick of Holla (cf. \ref{rmkw} to see that the HN filtration inherit the same collection of weight and ``+" means the sum of multiplicity) such that $\frac{\sum_i\pd_i^{(j)}}{rm_j}>\frac{\sum_i\pd_i^{(j+1)}}{rm_{j+1}}$ ($\pd_i^{(j)}$ is parabolic degree defined with the help of $\underline{e}^{(j)}$ and $\underline{\mathcal{D}}^{(j)}$) and $\sum_{\ell=1}^{s_{i,p}^{(j)}}m_{i,p,\ell}^{(j)}=m_j\,\forall i\in\{1,\dots,r\}$. We have written 
\begin{multline*}\chi_{kj}=m_jm_k(g-1)+\sum_{i=0}^r\left(m_ke_i^{(j)}-m_je_i^{(k)}\sum_{p\in D}\sum_{(\ell,s)\in I^{jk}_{i,p}}m^{(j)}_{i,p,\ell}m^{(k)}_{i,p,s}\right)\\-\sum_{i=1}^r\left(m_ke_i^{(j)}-m_j(e_{i-1}^{(k)}+|D|)-\sum_{p\in D}\sum_{(\ell,s)\in \mathcal{I}^{jk}_{i,p}}m^{(j)}_{i,p,\ell}m^{(k)}_{i-1,p,s}\right),
\end{multline*}
where $I^{kj}_{i,p}=\{(\ell,s)\in[1,s^{(k)}_{i,p}]\times[1,s^{(j)}_{i,p}]|w_{i,p,\ell}^{(k)}<w_{i,p,s}^{(j)}\}$ and $\mathcal{I}^{kj}_{i,p}=\{(\ell,s)\in[1,s^{(k)}_{i-1,p}]\times[1,s^{(j)}_{i,p}]|w_{i-1,p,\ell}^{(k)}\leq w_{i,p,s}^{(j)}\}$.
\end{cor}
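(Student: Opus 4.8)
The plan is to realize $\pc_{\underline{n},\underline{\mathcal{D}}}^{\underline{d},\underline{\alpha}-ss}$ as the complement of the Harder--Narasimhan unstable locus inside the larger stack $\pc_{\underline{n},\underline{\mathcal{D}}}^{\underline{d},\mathrm{inj}}$ of injective chains, computing the ambient class and the unstable contribution by separate means. By Proposition \ref{p10}(1), the standing hypothesis $d_{i-1}-d_i+2n|D|<\alpha_i-\alpha_{i-1}$ forces $\pc_{\underline{n},\underline{\mathcal{D}}}^{\underline{d},\underline{\alpha}-ss}\subset\pc_{\underline{n},\underline{\mathcal{D}}}^{\underline{d},\mathrm{inj}}$, so it is legitimate to work inside the injective stack. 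To compute $[\pc_{\underline{n},\underline{\mathcal{D}}}^{\underline{d},\mathrm{inj}}]$ I would use the forgetful tower that drops one bundle-and-map at a time,
$$\pc_{(n,\dots,n),\underline{\mathcal{D}}}^{\underline{d},\mathrm{inj}}\to \pc_{(n,\dots,n),(\mathcal{D}_0,\dots,\mathcal{D}_{r-1})}^{(d_0,\dots,d_{r-1}),\mathrm{inj}}\to\dots\to \pb_{n,\mathcal{D}_0}^{d_0}.$$
An injective strongly parabolic map $\phi_i\colon E_i\to E_{i-1}(D)$ exhibits $E_i$ as a parabolic subsheaf of $E_{i-1}(D)$ with torsion quotient of length $(d_{i-1}+n|D|)-d_i=d_{i-1}-d_i+n|D|=:\ell_i$, so the fibre of the $i$-th arrow over a fixed chain of length $i-1$ is exactly the parabolic Hecke stack $\mathrm{PHecke}(E_{i-1}(D))^{\ell_i}_{\mathcal{D}_i}$. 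Applying Corollary \ref{corhecke} at each stage and multiplying up the tower produces precisely the first (product) term of the statement: the base $[\pb_{n,\mathcal{D}_0}^{d_0}]$ together with one factor $[(C\times\mathbb{P}^{n-1})^{(\ell_i)}]\prod_{p\in D}[\mathrm{Flag}(n,s_{i,p},m_{i,p,j})]$ for each $i=1,\dots,r$.

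Next I would stratify $\pc_{\underline{n},\underline{\mathcal{D}}}^{\underline{d},\mathrm{inj}}$ by Harder--Narasimhan type using the canonical HN flag of Lemma \ref{lem3}(3). By Proposition \ref{p10}(2) every graded piece of the HN filtration of an injective constant-rank chain again has constant rank $(m_j,\dots,m_j)$; hence the HN types are indexed exactly by partitions $n=\sum_{j=1}^h m_j$ together with compatible splittings $\underline{d}=\sum_j\underline{e}^{(j)}$ and $\underline{\mathcal{D}}=\sum_j\underline{\mathcal{D}}^{(j)}$ subject to the strict slope inequalities $\frac{\sum_i\pd_i^{(j)}}{rm_j}>\frac{\sum_i\pd_i^{(j+1)}}{rm_{j+1}}$. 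Since the HN filtration is unique and canonical, the locally closed stratum with a fixed type is isomorphic to the iterated extension stack $\underline{\e}(\dots)^{\mathrm{gr}\,\underline{\alpha}-ss}$ of Proposition \ref{p7}, and motivic classes are additive over this stratification. The sum is finite because Corollary \ref{cor1} bounds, for each fixed total degree, the finitely many degree vectors that can carry a semistable piece; this is what keeps the whole computation inside $\hat{K}_0(\mathrm{Var})$.

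For each stratum I would then invoke Proposition \ref{p7}. Its hypotheses are met: the slopes decrease by the very definition of the HN type, and the slope-gap condition $\alpha_j-\alpha_{j-1}\geq 2g-2$ holds in the present large-gap regime, since the hypothesis gives $\alpha_i-\alpha_{i-1}>d_{i-1}-d_i+2n|D|\geq n|D|$ (the left inequality $n|D|\leq d_{i-1}-d_i+2n|D|$ moreover guarantees $\ell_i\geq 0$, so the symmetric powers are defined). Proposition \ref{p7} then yields $[\text{stratum}]=\mathbb{L}^{\sum_{k<j}\chi_{kj}}\prod_j[\pc_{\underline{m}_j,\underline{\mathcal{D}}^{(j)}}^{\underline{e}^{(j)},\underline{\alpha}-ss}]$, where the displayed $\chi_{kj}$ is simply the general formula of Proposition \ref{p7} specialized to the constant ranks $m_j$ and $m_k$. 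The unique stratum with $h=1$ is the semistable locus $[\pc_{\underline{n},\underline{\mathcal{D}}}^{\underline{d},\underline{\alpha}-ss}]$ itself; transposing it to the left-hand side leaves exactly the asserted subtracted sum over nontrivial types $h\geq 2$. As every $m_j$ there satisfies $m_j<n$, the identity is a genuine recursion on the rank.

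The step I expect to be the main obstacle is the first one: rigorously identifying the fibres of the forgetful tower with the parabolic Hecke stacks of Corollary \ref{corhecke}. One must reconcile the strongly parabolic condition on $\phi_i$ with the twist by $(D)$ so that (a) the torsion length comes out to $\ell_i=d_{i-1}-d_i+n|D|$ rather than the naive $d_{i-1}-d_i$, and (b) the parabolic flag of $E_i$ at each $p\in D$ is genuinely a free choice cut out by the full flag variety $\mathrm{Flag}(n,s_{i,p},m_{i,p,j})$, with the stack automorphism groups matching so that the classes multiply cleanly. A secondary delicacy is confirming additivity of classes over the HN stratification at the level of stacks together with the finiteness supplied by Corollary \ref{cor1}, which together ensure the recursion is a well-defined identity in $\hat{K}_0(\mathrm{Var})$.
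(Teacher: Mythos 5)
Your overall architecture coincides with the paper's proof: both arguments realize $\pc_{\underline{n},\underline{\mathcal{D}}}^{\underline{d},\underline{\alpha}-ss}$ inside the injective locus $\pc_{\underline{n},\underline{\mathcal{D}}}^{\underline{d},\mathrm{inj}}$ via Proposition \ref{p10}, compute the class of the injective locus through Hecke modifications $E_i\subset E_{i-1}(D)$ of length $d_{i-1}-d_i+n|D|$ and Corollary \ref{corhecke}, stratify by Harder--Narasimhan type (constant-rank sub-quotients, again by Proposition \ref{p10}), and transpose the $h=1$ stratum to get the recursion. The genuine gap is in how you license the stratum formula. You invoke Proposition \ref{p7} as stated, and to check its hypothesis $\alpha_j-\alpha_{j-1}\geq 2g-2$ you argue that the standing assumption gives $\alpha_i-\alpha_{i-1}>d_{i-1}-d_i+2n|D|\geq n|D|$. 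This is a non sequitur: $n|D|$ has nothing to do with $2g-2$. For a curve of large genus with a single marked point and small rank one has $n|D|<2g-2$, and the hypotheses of Corollary \ref{cors} are then perfectly consistent with $\alpha_i-\alpha_{i-1}<2g-2$; the corollary is deliberately stated without any genus condition, so the gap condition cannot be derived. That condition is exactly what powers Lemma \ref{keylem}, hence the vanishing $\mathbb{H}^2=0$ on which the affine-fibration structure of the strata (and the exponent $\sum_{k<j}\chi_{kj}$) rests, so as written this step of your proof fails; what you have actually proved is the corollary under the additional hypothesis $\alpha_i-\alpha_{i-1}\geq 2g-2$.

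The missing idea --- and the route the paper takes --- is that here the vanishing of $\mathbb{H}^2$ comes from injectivity rather than from the slope gap. Since every chain in play lies in $\pc_{\underline{n},\underline{\mathcal{D}}}^{\underline{d},\mathrm{inj}}$ and (as the paper asserts, using Proposition \ref{p10}) all sub-quotients of the HN filtration of such a chain again have injective maps $\phi$, the last assertion of Proposition \ref{p6} gives $\mathbb{H}^2(C^\bullet(\mathbf{E''}_\bullet,\mathbf{E'}_\bullet))=0$ whenever the source chain $\mathbf{E''}_\bullet$ has injective maps --- which is the case for the graded piece appearing as the source in the inductive step. One then reruns the \emph{argument} of Proposition \ref{p7}, not its statement: the forgetful map from each iterated-extension stack to the product of semistable stacks of the graded pieces is still an affine-space fibration of the stated dimension, now because $\mathbb{H}^2$ vanishes for this alternative reason. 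With this substitution your proof becomes correct and agrees with the paper's; the points you flag as delicate (the identification of the fibres of your forgetful tower with $\mathrm{PHecke}(E_{i-1}(D))^{\ell_i}_{\mathcal{D}_i}$ for $\ell_i=d_{i-1}-d_i+n|D|$, and the finiteness of the sum via Corollary \ref{cor1}) are handled exactly as you propose.
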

\begin{proof}
The Proposition \ref{p10} shows us that all $\underline{\alpha}$-semi-stable parabolic chains is contained in the sub-stack of parabolic chains such that all maps $\phi_i$ are injective and moreover for any such parabolic chains all sub-quotients of HN-filtration also satisfy this condition.

Thus we have 
$$[\pc_{\underline n,\underline{\mathcal{D}}}^{\underline{d},\underline{\alpha}-ss}]=[\pc_{\underline n,\underline{\mathcal{D}}}^{\underline{d},\mathrm{inj}}]-\bigcup \text{Harder-Narasimhan strata}.$$

The stack $[\pc_{\underline n,\underline{\mathcal{D}}}^{\underline{d},\mathrm{inj}}]$ classifies a collection of Hecke modification $E_i\subset E_{i-1}(D)$ of length $d_{i-1}+n|D|-d_i$. Its classe is computed by using the formula of the Corollary \ref{corhecke}.

The Harder-Narasimhan strata are given by ranks, degrees and weight datum as claimed. Since in all occurring sub-quotients the maps $\phi$ are injective, we can apply the Proposition \ref{p6} and the argument of Proposition \ref{p7} for computing their class.    
\end{proof}

\section{Crossing the wall of critical values}
\subsection{Critical values}
A parabolic chain $\bf{E^r_\bullet}$ of fixed topological and parabolic type is strictly $\underline{\alpha}$-semi-stable if and only if it has a proper parabolic sub-chain $\bf{E'^r_\bullet}$ such that $\pas_{\underline{\alpha}}(\bf{E'^r_\bullet})=\pas_{\underline{\alpha}}(\bf{E^r_\bullet})$, i.e
\begin{equation}\label{eqn1}\sum_{i=0}^r\frac{\rk(E'_i)}{|\rk(\mathbf{{E'}_\bullet^r})|}(\pas(E'_i)+\alpha_i)=\sum_{i=0}^r\frac{\rk(E_i)}{|\rk(\mathbf{{E}_\bullet^r})|}(\pas(E_i)+\alpha_i).
\end{equation}
There are two ways in which this can happen. The first one is if there exists a parabolic sub-chain $\bf{E'^r_\bullet}$ such that
$$\frac{\rk(E'_i)}{|\rk(\mathbf{{E'}_\bullet^r})|}=\frac{\rk(E_i)}{|\rk(\mathbf{{E}_\bullet^r})|},\,\forall i\in\{0,\dots,r\}\text{ and}$$
$$\pas(\oplus_{i=0}^rE'_i)=\pas(\oplus_{i=0}^rE_i).$$
In this case $\bf{E^r_\bullet}$ is strictly $\underline{\alpha}$-semi-stable for all values of $\underline{\alpha}$. This phenomemon are so-called \textit{$\underline{\alpha}$-independent semi-stability}.

The other way in which strict $\underline{\alpha}$-semi-stability can happen is if the equation \ref{eqn1} holds but
$$\frac{\rk(E'_i)}{|\rk(\mathbf{{E'}_\bullet^r})|}\neq\frac{\rk(E_i)}{|\rk(\mathbf{{E}_\bullet^r})|}$$
for some $i\in\{0,\dots,r\}$. The values of $\underline{\alpha}$ for which this happens are called \textit{critical values}.

From now on we shall make the following assumption on the weights.
\begin{asum}\label{asum1}Let $\{w_{p,i}\}$ be the collection of all the weights of $E_i$ together. We assume that they are all of multiplicity one and that, for a large integer $N$ depending only on the ranks, they following property :
$$\sum_{1\leq i\leq r,p\in D}n_{p,i}w_{p,i}\in \mathbb{Z},\quad n_{p,i}\in \mathbb{Z},|n_{p,i}|\leq N \Rightarrow n_{p,i}=0,\,\forall p,i. $$
The weights failing this condition are a finite union of hyperplane in \linebreak $[0,1)^{|D|.\sum_{i=0}^r\rk(E_i)}$.

The assumption that the weights are all of multiplicity only helps us to state the property more easily. In fact, we can remove this condition by repeating each weight according to its multiplicity and seeing them as the different weights.
\end{asum}
\begin{lem}\label{lem2}
Under Assumption \ref{asum1}, there are no $\underline{\alpha}$-independent semi-stable parabolic chains of rank $\underline{n}$ (by taking $N$ larger than $\sum_{i=0}^rn_i$).
\end{lem}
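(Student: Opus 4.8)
The plan is to argue by contradiction. Suppose some parabolic chain $\mathbf{E^r_\bullet}$ of rank $\underline{n}$ is $\underline{\alpha}$-independent semi-stable, witnessed by a proper sub-chain $\mathbf{E'^r_\bullet}$ with $\rk(E'_i)/|\rk(\mathbf{E'^r_\bullet})|=\rk(E_i)/|\rk(\mathbf{E^r_\bullet})|$ for all $i$ and $\pas(\oplus_i E'_i)=\pas(\oplus_i E_i)$. Following the reduction indicated in Assumption \ref{asum1}, I would first assume without loss of generality that every weight in the collection $\{w_{p,i}\}$ has multiplicity one, since repeating each weight according to its multiplicity (and treating the copies as distinct weights) reduces the general case to this one. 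Write $R:=|\rk(\mathbf{E^r_\bullet})|=\sum_i n_i$ and $R':=|\rk(\mathbf{E'^r_\bullet})|$, and note $R'\leq R$ because the sheaf maps $E'_i\hookrightarrow E_i$ are injective.

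Next I would rewrite the slope equality in the form $R\sum_i\pd(E'_i)=R'\sum_i\pd(E_i)$ and separate the integral (ordinary-degree) part from the weighted part. Using $\pd(E)=\deg(E)+\sum_{p,j}m_{p,j}w_{p,j}$ together with the convention that $\mathbf{E'^r_\bullet}$ inherits its weights from $\mathbf{E^r_\bullet}$ (Remark \ref{rmkw}), this becomes
$$R\sum_i\deg(E'_i)-R'\sum_i\deg(E_i)=\sum_{p,j}\bigl(R'm_{p,j}-Rm'_{p,j}\bigr)w_{p,j},$$
where the sum runs over all weights $w_{p,j}$ of the chain and $m_{p,j},m'_{p,j}$ denote the multiplicity of $w_{p,j}$ in $\mathbf{E^r_\bullet}$ and $\mathbf{E'^r_\bullet}$ respectively. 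Since degrees are integers, the left-hand side is an integer, so this exhibits an integral linear combination of the weights equal to an integer.

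The key step is the coefficient bound. Under the multiplicity-one normalization each $m_{p,j}=1$ and each $m'_{p,j}\in\{0,1\}$, so the coefficient $R'm_{p,j}-Rm'_{p,j}$ equals either $R'$ or $R'-R$; in both cases its absolute value is at most $R=\sum_i n_i$. Choosing $N>\sum_i n_i$ as in the statement, Assumption \ref{asum1} applies and forces every coefficient $R'm_{p,j}-Rm'_{p,j}$ to vanish. I expect this reduction-plus-bound to be the main obstacle: without passing to multiplicity one the coefficients could exceed $R$ and escape the genericity window, so recognizing that the arithmetic of Assumption \ref{asum1} is exactly what controls these coefficients is the crux.

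Finally I would extract the contradiction. Vanishing of all coefficients gives $R'=R\,m'_{p,j}$ for every weight; since $R'\geq 1$ and $m'_{p,j}\in\{0,1\}$ this forces $m'_{p,j}=1$ and $R'=R$, so $\mathbf{E'^r_\bullet}$ inherits all the weights and $n'_i=n_i$ for each $i$. The displayed identity then reduces to $\sum_i\deg(E'_i)=\sum_i\deg(E_i)$; combined with the full-rank inclusions $E'_i\hookrightarrow E_i$, whose cokernels are torsion and hence give $\deg(E'_i)\leq\deg(E_i)$ termwise, this forces $\deg(E'_i)=\deg(E_i)$ and therefore $E'_i=E_i$ for every $i$. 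Thus $\mathbf{E'^r_\bullet}=\mathbf{E^r_\bullet}$, contradicting the hypothesis that $\mathbf{E'^r_\bullet}$ is proper, which completes the argument.
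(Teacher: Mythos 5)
Your proof is correct and follows the same route as the paper, whose own proof is a one-line assertion that Assumption \ref{asum1} forces $\pas(\oplus_i E'_i)\neq\pas(\oplus_i E_i)$ for every proper parabolic sub-chain. You have simply made that assertion explicit --- rewriting the slope equality as an integral combination of weights with coefficients bounded by $\sum_i n_i < N$, and correctly handling the degenerate case of vanishing coefficients (forcing $\mathbf{E'^r_\bullet}=\mathbf{E^r_\bullet}$), a step the paper leaves implicit.
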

\begin{proof}
Using the assumption \ref{asum1}, we have $\pas(\oplus E'_i)\neq \pas(\oplus E_i)$ for all parabolic sub-chains $\bf{E'^r_\bullet}$ of $\bf{E^r_\bullet}$. So the lemma is its consequence.
\end{proof}
If the rank, the degree and the weight data are fixed, equation \ref{eqn1} requires that the point $\underline{\alpha}$ belong to the intersection of an affine hyperplane with the space of semi-stability parameters. 
Call this intersection a \textit{wall}. 
In general the number of wall is infinite event in the special case - parabolic triples (see. \cite[Proposition 5.2 (ii)]{ggm}).

When varying the stability parameter such that the parameter crosses a wall, the change of the moduli spaces can be described in principle. Such description can be difficult to obtain on the level of coarse moduli space cause of the existence of poly-stable objects, but easier on the level of moduli stacks. So, we shall study the behavior of the moduli stack when semi-stability parameter runs into a wall and out of it.
\subsection{Wall crossing from the point of view of stacks}
For a parabolic chains of fixed weight datum, we can alway introduce a new parabolic structure for each parabolic bundle which have the same collection of weights- this collection contain all the weights of each parabolic bundle- and keep track all the information of old parabolic structures, by using the point of view of \cite[Section 2.2]{by}. By using the trick of Holla (see remark \ref{rmkw}), we can say also about the collection of weights of its Harder-Narasimhan filstration. Using this point of view, we consider the following proposition.

\begin{prop}\label{pkey}
Let $\underline{\alpha}$ be a critical value, let $\underline{\delta}\in \mathbb{R}^{r+1}$, fix a rank $\underline{n}\in \mathbb{N}^{r+1}$ and fix weights $\{w_{p,i}\}$.
\begin{enumerate}
\item There exists $\epsilon>0$ such that for all $0<t<\epsilon$, the semi-stability conditions $\underline{\alpha}_t=\underline{\alpha}+t\underline{\delta}$ coincide for all parabolic chains of rank $\underline{m}$ if $\underline{m}\leq \underline{n}$.
\item For any $0<t<\epsilon$ as in (1) we have 
$$\pc_{\underline{n},\underline{\mathcal{D}}}^{\underline{d},\underline{\alpha}_t-ss}\subset \pc_{\underline{n},\underline{\mathcal{D}}}^{\underline{d},\underline{\alpha}-ss}.$$
Moreover, the complement $\pc_{\underline{n},\underline{\mathcal{D}}}^{\underline{d},\underline{\alpha}-ss}-\pc_{\underline{n},\underline{\mathcal{D}}}^{\underline{d},\underline{\alpha}_t-ss}$ is the finite union of the $\underline{\alpha}_t$-Harder-Narasimhan strata of $\pc_{\underline{n},\underline{\mathcal{D}}}^{\underline{d}}$ of type $(\underline{n}^i,\underline{d}^i,\underline{\mathcal{D}}^i)$ such that 
$\pas_{\underline{\alpha}}(\underline{n}^i,\underline{d}^i,\underline{\mathcal{D}}^i)=\pas_{\underline{\alpha}}(\underline{n},\underline{d},\underline{\mathcal{D}})$ for all $i$.
\end{enumerate}
\end{prop}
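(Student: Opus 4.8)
The plan is to prove the two assertions of Proposition \ref{pkey} in order, exploiting the fact that a critical value is a point lying on finitely many walls locally.

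\medskip

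\textbf{Part (1): local constancy of semi-stability along the ray.} First I would observe that for a fixed rank $\underline{m}\leq \underline{n}$, a fixed degree and a fixed weight collection, the possible numerical types $(\rk(\mathbf{E'^r_\bullet}),\deg(\mathbf{E'^r_\bullet}))$ of destabilizing sub-chains are constrained: by Corollary \ref{cor1} only finitely many degree vectors $\underline{d}'$ can occur for an $\underline{\alpha}$-semi-stable chain, and the ranks of sub-chains are bounded by $\underline{m}$. Hence there are only finitely many affine functions of the parameter of the form
$$t\mapsto \pas_{\underline{\alpha}+t\underline{\delta}}(\mathbf{E'^r_\bullet})-\pas_{\underline{\alpha}+t\underline{\delta}}(\mathbf{E^r_\bullet})$$
that are relevant, as $(\rk,\deg,\mathcal{D})$ of the sub-chain range over this finite set. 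Each such function is affine in $t$, so either it vanishes identically or it has a unique zero. Taking $\epsilon>0$ smaller than the smallest positive such zero (over this finite collection, and over all the finitely many $\underline{m}\leq\underline{n}$) guarantees that on the open interval $(0,\epsilon)$ none of these functions changes sign. Therefore the set of sub-chains that destabilize is the same for every $\underline{\alpha}_t$ with $0<t<\epsilon$, which is precisely the claim that the semi-stability condition $\underline{\alpha}_t$ is constant on this interval.

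\medskip

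\textbf{Part (2): identifying the complement with HN-strata.} For the inclusion $\pc_{\underline{n},\underline{\mathcal{D}}}^{\underline{d},\underline{\alpha}_t-ss}\subset \pc_{\underline{n},\underline{\mathcal{D}}}^{\underline{d},\underline{\alpha}-ss}$, I would use that for any sub-chain $\mathbf{E'^r_\bullet}$ the affine function above is $\leq 0$ at $t$ small and positive (by $\underline{\alpha}_t$-semi-stability) and, by affineness together with Lemma \ref{lem2} (no $\underline{\alpha}$-independent strictly semi-stable chains under Assumption \ref{asum1}, so the function is genuinely affine and not identically zero unless its value at the critical parameter is strictly negative), its value at $t=0$, namely $\pas_{\underline{\alpha}}(\mathbf{E'^r_\bullet})-\pas_{\underline{\alpha}}(\mathbf{E^r_\bullet})$, is $\leq 0$; this is exactly $\underline{\alpha}$-semi-stability. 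For the description of the complement, a chain $\mathbf{E^r_\bullet}$ lies in $\pc_{\underline{n},\underline{\mathcal{D}}}^{\underline{d},\underline{\alpha}-ss}-\pc_{\underline{n},\underline{\mathcal{D}}}^{\underline{d},\underline{\alpha}_t-ss}$ exactly when it is $\underline{\alpha}$-semi-stable but not $\underline{\alpha}_t$-semi-stable. Such a chain has a nontrivial $\underline{\alpha}_t$-Harder-Narasimhan filtration (Lemma \ref{lem3}(3)); I would argue that every jump $(\underline{n}^i,\underline{d}^i,\underline{\mathcal{D}}^i)$ of this filtration must have $\pas_{\underline{\alpha}}(\underline{n}^i,\underline{d}^i,\underline{\mathcal{D}}^i)=\pas_{\underline{\alpha}}(\underline{n},\underline{d},\underline{\mathcal{D}})$, since $\underline{\alpha}$-semi-stability forces $\pas_{\underline{\alpha}}$ of each sub-chain to be $\leq\pas_{\underline{\alpha}}(\mathbf{E^r_\bullet})$ while $\underline{\alpha}_t$-instability forces the maximal $\underline{\alpha}_t$-slope to exceed it, and taking $t\to 0$ collapses all the distinct $\underline{\alpha}_t$-slopes onto the common $\underline{\alpha}$-slope. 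Finiteness of the collection of such types follows again from Corollary \ref{cor1}.

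\medskip

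The main obstacle I anticipate is making rigorous the claim that the $\underline{\alpha}_t$-Harder-Narasimhan type of every chain in the complement has all jumps of equal $\underline{\alpha}$-slope: one must check both that $\underline{\alpha}$-semi-stability survives the limit $t\to 0$ (using the affineness from Part (1), which is where Assumption \ref{asum1} and Lemma \ref{lem2} are essential to rule out identically-vanishing slope differences and hence guarantee the HN-filtration is genuinely by critical sub-objects) and that no jump can have \emph{strictly} smaller $\underline{\alpha}$-slope (which would contradict $\underline{\alpha}$-semi-stability of a sub-chain built from the top pieces of the filtration). Care is also needed to ensure the bookkeeping of weight data $\underline{\mathcal{D}}^i$ is consistent across the filtration; here I would invoke the convention of Remark \ref{rmkw} (the trick of Holla) so that sub-quotients inherit the full weight collection, exactly as in the setup preceding the proposition.
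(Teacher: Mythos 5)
Your overall strategy for both parts mirrors the paper's proof (which itself follows \cite[Proposition 2]{gh}): analyze the sign of the affine function $t\mapsto \pas_{\underline{\alpha}_t}(\mathbf{F^r_\bullet})-\pas_{\underline{\alpha}_t}(\mathbf{E^r_\bullet})$ on $(0,\epsilon)$, and then identify the complement $\pc_{\underline{n},\underline{\mathcal{D}}}^{\underline{d},\underline{\alpha}-ss}-\pc_{\underline{n},\underline{\mathcal{D}}}^{\underline{d},\underline{\alpha}_t-ss}$ with the $\underline{\alpha}_t$-Harder--Narasimhan strata whose sub-quotients all have the critical $\underline{\alpha}$-slope. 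Your Part (2) is essentially correct and is what the paper leaves implicit by its reference to \cite{gh}. However, Part (1) has a genuine gap precisely at the step the paper singles out as requiring ``care on the weights'': your finiteness claim. Corollary \ref{cor1} does not deliver it. That corollary fixes the \emph{total} degree $d=\sum d_i$ and bounds the degree vectors $\underline{d}$ for which $\underline{\alpha}$-semi-stable chains of that fixed total degree may exist; here, by contrast, (i) assertion (1) quantifies over \emph{all} chains of rank $\underline{m}\leq\underline{n}$, of arbitrary degree, and (ii) the destabilizing sub-chains of such chains have total degrees that are unbounded below and individual degrees that are completely unconstrained, so they do not form a finite list of numerical types. (The paper itself remarks that the set of walls is infinite in general, even for parabolic triples.) Consequently ``the smallest positive zero over this finite collection'' is not known to exist: with infinitely many affine functions in play, their zeros could a priori accumulate at $t=0$, and your choice of $\epsilon$ collapses.

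The correct replacement — and this is exactly what the paper's written proof does — is a \emph{discreteness}, not finiteness, argument. The slope difference has the shape $c+f(\{w_{p,i}\})+t\,b$, where $c$ is a rational number whose denominator is bounded by $|\underline{n}|(|\underline{n}|-1)$ (bounded ranks), $f$ ranges over the finitely many rational combinations of the \emph{fixed} finite weight collection with bounded coefficients, and the slope coefficient $b$ ranges over a finite set determined by $\underline{\delta}$ and the bounded ranks. Note the function depends only on the total parabolic degrees and total weighted ranks of $\mathbf{E^r_\bullet}$ and $\mathbf{F^r_\bullet}$, which is what makes these bounds effective even though individual degrees are unbounded. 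Hence the constant term ranges over a closed discrete subset of $\mathbb{R}$ (a finite union of translates of a lattice $\tfrac{1}{Q}\mathbb{Z}$), so the set of all possible zeros is closed and discrete, and some interval $(0,\epsilon)$ avoids it; identically vanishing differences are harmless (and are ruled out as genuinely destabilizing by Assumption \ref{asum1} and Lemma \ref{lem2}, as you note). If you replace the appeal to Corollary \ref{cor1} in Part (1) by this bounded-denominator argument, your proof goes through; Corollary \ref{cor1} is legitimately used only where you invoke it in Part (2), to see that finitely many HN types $(\underline{n}^i,\underline{d}^i,\underline{\mathcal{D}}^i)$ of the prescribed critical slope actually occur.
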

\begin{proof}
The proof is similar the one in \cite[Proposition 2]{gh}. The proof of the first claim require some care on the weights so we shall write it down.

To simplify notation, let us abbreviate the $\underline{\alpha}_t$-slope by $\pas_t:=\pas_{\underline{\alpha}_t}$. We shall prove by using disproof. If this $\epsilon$ doesn't exist, means that for every $\epsilon>0$, there exist $0<x_1<x_2<\epsilon$ such that $\underline{\alpha}_{x_1}$ and $\underline{\alpha}_{x_2}$ define different semi-stability conditions.
So there exist parabolic chains $\bf{F^r_\bullet}\subset\bf{E^r_\bullet}$ such that either $\pas_{x_1}(\mathbf{F^r_\bullet})>\pas_{x_1}(\mathbf{E^r_\bullet})$ and $\pas_{x_2}(\mathbf{F^r_\bullet})\leq\pas_{x_2}(\mathbf{E^r_\bullet})$ or $\pas_{x_1}(\mathbf{F^r_\bullet})\leq\pas_{x_1}(\mathbf{E^r_\bullet})$ and $\pas_{x_2}(\mathbf{F^r_\bullet})>\pas_{x_2}(\mathbf{E^r_\bullet})$.
However, $\pas_{t}(\mathbf{F^r_\bullet})-\pas_{t}(\mathbf{E^r_\bullet})$ is continuous over $t\in \mathbb{R}$, so there exist $x_1\leq x\leq x_2$ such that $\pas_{t}(\mathbf{F^r_\bullet})-\pas_{t}(\mathbf{E^r_\bullet})=0$.

By definition $\pas_{t}(\mathbf{F^r_\bullet})-\pas_{t}(\mathbf{E^r_\bullet})=c+f(\{w_{p,i}\})+x.\frac{m_i\delta_i}{M}$, where  $c$ is a rational number with denominator bounded by $|\underline{n}|(|\underline{n}|-1)$, 
$f$ is a linear combinaison of $w_{p,i}$ with rational coefficients who are bounded by  $2|\underline{n}|$, and where $m_i,M$ are integers satisfying $M\leq |\underline{n}|(|\underline{n}|-1)$ and $m_i\leq |\underline{n}|(|\underline{n}|-1)$. Therefore, $x$ must lie in a discrete subset of $\mathbb{R}$ which is conflict with the condition that we can choose $\epsilon>0$ as small as possible. Consequently, the first assertion is claimed.

\end{proof}
\section{Moduli stacks of parabolic chains : recursion formulas}
In this section we shall explain our strategy to compute the motives of moduli spaces of parabolic chains $\underline{\alpha}$-semi-stable which satisfy the condition $\alpha_i-\alpha_{i-1}\geq 2g-2$.

The first step is applying our necessary conditions to find - for any rank $\underline{n}$, degree $\underline{d}$ and weight datum $\underline{\mathcal{D}}$ and any parameter $\underline{\alpha}$ satisfying $\alpha_i-\alpha_{i-1}\geq 2g-2$ - a family of stability conditions $\underline{\alpha}_t=\underline{\alpha}+t\underline{\delta}$ such that for all $t\geq 0$, the parameter $\underline{\alpha}_t$ satisfies $\alpha_{t,i}-\alpha_{t,i-1}\geq 2g-2$, and such that we can compute $\pc_{\underline{n},\underline{\mathcal{D}}}^{\underline{d},\underline{\alpha}_t-ss}$ for $t$ large enough.

Let us first assume that $n_i\neq n_j$ for some $i,j$. Then we can find a such family $\underline{\alpha}_t$ such that the moduli spaces is empty for $t$ large enough.
\begin{lem}
Fix $\underline{n}\in \mathbb{N}^{r+1},\,\underline{d}\in \mathbb{Z}^{r+1}, \underline{\mathcal{D}}$ together with a semi-stability $\underline{\alpha}\in\mathbb{R}^{r+1}$ satisfying $\alpha_i-\alpha_{i-1}\geq 2g-2$. Suppose that $n_r\neq n_i$ for some $0\leq i<r$. Then we have :
\begin{enumerate}
\item If $n_r=n_{r-1}=\dots=n_{k+1}<n_k$ for some $k$, define $\delta_i:=\begin{cases}
1 \text{ if } i>k\\
0 \text{ otherwise.} \end{cases}$
\item If $n_r=n_{r-1}=\dots=n_{k+1}>n_k$ for some $k$, define $\delta_i:=\begin{cases}
0 \text{ if } i>k\\
-1 \text{ otherwise.} \end{cases}$.
\end{enumerate}
Let $\underline{\alpha}_t:=\underline{\alpha}+t\underline{\delta}$. Then $\alpha_{t,i}-\alpha_{t,i-1}\geq 2g-2$ for all $t\geq 0$ and $i\in \{1,\dots,r\}$ and $\pc_{\underline{n},\underline{\mathcal{D}}}^{\underline{d},\underline{\alpha}_t-ss}=0$ for $t$ large enough.
\end{lem}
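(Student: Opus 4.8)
The plan is to establish the two assertions separately: the gap inequality $\alpha_{t,i}-\alpha_{t,i-1}\geq 2g-2$ by a direct inspection of $\underline{\delta}$, and the emptiness by exhibiting, for $t$ large, a violation of one of the necessary conditions of Proposition \ref{p4}, so that no $\underline{\alpha}_t$-semi-stable parabolic chain of rank $\underline{n}$, degree $\underline{d}$ and data $\underline{\mathcal{D}}$ can exist.

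First I would handle the gap condition. In both cases the vector $\underline{\delta}$ is non-decreasing in the index: in case (1) it reads $(0,\dots,0,1,\dots,1)$ with the single jump occurring at $i=k+1$, and in case (2) it reads $(-1,\dots,-1,0,\dots,0)$, again with the jump at $i=k+1$. Hence $\delta_i-\delta_{i-1}\geq 0$ for every $i\in\{1,\dots,r\}$, and therefore $\alpha_{t,i}-\alpha_{t,i-1}=(\alpha_i-\alpha_{i-1})+t(\delta_i-\delta_{i-1})\geq \alpha_i-\alpha_{i-1}\geq 2g-2$ for all $t\geq 0$. This in particular makes $\underline{\alpha}_t$ strictly increasing for $t$ large, so that the necessary conditions of Proposition \ref{p4} are available.

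For the emptiness in case (1), where $n_{k+1}=\dots=n_r<n_k$, I would apply condition (3) of Proposition \ref{p4} to the pair $(k,j)=(k,k+1)$, whose hypothesis $n_{k+1}<\min\{n_k\}=n_k$ holds. Writing $A:=\sum_{i\notin[k,k+1]}n_i+2n_{k+1}=|\underline{n}|-(n_k-n_{k+1})$ for the common denominator, the only $t$-dependence of the left-hand side comes from the terms $n_i\alpha_{t,i}$ with $i\geq k+2$ together with the contribution of $\alpha_{t,k+1}$, giving it $t$-coefficient $(\sum_{i\geq k+1}n_i)/A$; on the other hand $\pas_{\underline{\alpha}_t}(\mathbf{E^r_\bullet})$ has $t$-coefficient $(\sum_{i\geq k+1}n_i)/|\underline{n}|$. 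Since $n_k>n_{k+1}$ forces $A<|\underline{n}|$, the left-hand coefficient is strictly larger, so for $t$ large the left-hand side exceeds $\pas_{\underline{\alpha}_t}(\mathbf{E^r_\bullet})$, condition (3) fails, and the stack $\pc_{\underline{n},\underline{\mathcal{D}}}^{\underline{d},\underline{\alpha}_t-ss}$ is empty.

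For case (2), where $n_k<n_{k+1}=\dots=n_r$, I would instead invoke the dual condition (4) of Proposition \ref{p4} with $(k,j)=(k,k+1)$, whose hypothesis $n_k<\min\{n_{k+1}\}=n_{k+1}$ holds. Here every weight $\alpha_i$ entering its left-hand side carries an index $i>k$, where $\delta_i=0$, so that left-hand side is independent of $t$, whereas $\pas_{\underline{\alpha}_t}(\mathbf{E^r_\bullet})$ has $t$-coefficient $-(\sum_{i\leq k}n_i)/|\underline{n}|<0$ and hence tends to $-\infty$; so condition (4) is again violated for $t$ large and the stack is empty. One may alternatively deduce case (2) from case (1) by passing to the dual parabolic chain. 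The step requiring the most care is the bookkeeping of which indices feed the $t$-coefficients on each side of the inequalities, and, in case (1), the strict inequality $A<|\underline{n}|$, which is precisely where the rank jump $n_k>n_{k+1}$ is used.
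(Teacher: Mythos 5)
Your proof is correct and takes essentially the same route as the paper: the gap condition follows from $\delta_i\geq\delta_{i-1}$ exactly as in the paper, and emptiness is obtained by showing that condition (3) (in case (1)) resp. condition (4) (in case (2)) of Proposition \ref{p4}, applied with $j=k+1$, must fail for $t$ large. The only cosmetic difference is in case (1): the paper first rearranges condition (3) into an equivalent inequality whose right-hand side is $t$-independent (using $\delta_k=0$) and notes that $\pas_{\underline{\alpha}_t}(\underline{n},\underline{d},\underline{\mathcal{D}})\to+\infty$, whereas you keep the inequality in its original form and compare the linear growth rates in $t$ of the two sides via $A<|\underline{n}|$ --- equivalent manipulations of the same inequality.
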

\begin{proof}
In both case, we have $\delta_i\geq \delta_{i-1}$. So 
$$\alpha_{t,i}-\alpha_{t,i-1}=(\alpha_i-\alpha_{i-1})+t(\delta_i-\delta_{i-1})\geq (\alpha_i-\alpha_{i-1})\geq 2g-2.$$ 

In case (1), we have (because $\delta_i\geq 0$ and $\underline{\delta}\neq 0$)
$$\lim_{t\to \infty}\pas_{\underline{\alpha}_t}(\underline{n},\underline{d},\underline{\mathcal{D}})=\lim_{t\to \infty}\frac{\sum_{i=0}^r(\pd_i+(\alpha_i+t\delta_i)n_i)}{|n|}=+\infty.$$
For $j=k+1$ the condition (3) of the Proposition \ref{p4} is equivalent to 
$$\pas_{\underline{\alpha}_t}(\underline{n},\underline{d},\underline{\mathcal{D}})\leq \frac{\pd_{k}-\pd_{k+1}+n_k|D|}{n_{k}-n_{k+1}}+\alpha_k+t\delta_k.$$
Since $\delta_k=0$, the right hand side is independent of $t$, so that this condition cannot hold for $t$ large enough.

In case (2), we have 
$$\lim_{t\to \infty}\pas_{\underline{\alpha}_t}(\underline{n},\underline{d},\underline{\mathcal{D}})=\lim_{t\to \infty}\frac{\sum_{i=0}^r(\pd_i+(\alpha_i+t\delta_i)n_i)}{|n|}=-\infty.$$
Using the condition (4) of the Proposition \ref{p4} for $j=k+1$ we have 
$$\frac{\pd_{k+1}-\pd_{k}-n_k|D|}{n_{k+1}-n_{k}}+\alpha_{k+1}+t\delta_{k+1}\leq \pas_{\underline{\alpha}_t}(\underline{n},\underline{d},\underline{\mathcal{D}}).$$
Since $\delta_{k+1}=0$, the left hand side is independent of $t$, so that this condition cannot hold for $t$ large enough.  
\end{proof}

If $\underline{n}=(n,\dots,n)$, we have the following lemma 
\begin{lem}
Let $\underline{n}=(n,\dots,n)$ and let $\underline{\alpha}$ satisfying $\alpha_i-\alpha_{i-1}\geq 2g-2$. Let $\delta_i=i$, and put $\underline{\alpha}_t:=\underline{\alpha}+t\underline{\delta}$. Then $\alpha_{t,i}-\alpha_{t,i-1}\geq 2g-2$ for all $t\geq 0$, and for $t$ large enough the parameter $\underline{\alpha}_t$ satisfies $\alpha_{t,i}-\alpha_{t,i-1}>d_{i-1}-d_i+2n|D|$. For such $t$ we have that $[\pc_{\underline n,\mathcal{D}}^{\underline{d},\underline{\alpha}_t-ss}]$ is calculated from the Corollary \ref{cors}. 
\end{lem}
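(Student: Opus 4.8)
The plan is to treat the three assertions separately: the first two are one-line consequences of the explicit form of the shift $\underline{\delta}=(0,1,\dots,r)$, while the third reduces to verifying that the hypotheses of Corollary \ref{cors} are met for large $t$. First I would record the effect of the shift on consecutive gaps. Since $\delta_i=i$, for every $i\in\{1,\dots,r\}$ we have
$$\alpha_{t,i}-\alpha_{t,i-1}=(\alpha_i-\alpha_{i-1})+t(\delta_i-\delta_{i-1})=(\alpha_i-\alpha_{i-1})+t.$$
Because $t\geq 0$ and $\alpha_i-\alpha_{i-1}\geq 2g-2$ by hypothesis, the gap remains $\geq 2g-2$ for all $t\geq 0$; this is the first assertion.

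For the second assertion I would solve the target inequality for $t$. We want $\alpha_{t,i}-\alpha_{t,i-1}>d_{i-1}-d_i+2n|D|$, which by the displayed identity is equivalent to
$$t>\big(d_{i-1}-d_i+2n|D|\big)-(\alpha_i-\alpha_{i-1}).$$
The right-hand side is a fixed real number for each of the finitely many indices $i\in\{1,\dots,r\}$, so taking $t$ larger than the maximum of these $r$ quantities makes the inequality hold simultaneously for all $i$. This is precisely the strict inequality $d_{i-1}-d_i+2n|D|<\alpha_{t,i}-\alpha_{t,i-1}$ required in Corollary \ref{cors}.

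The last assertion then follows by checking, for such $t$, the full chain $n|D|\leq d_{i-1}-d_i+2n|D|<\alpha_{t,i}-\alpha_{t,i-1}$ demanded by Corollary \ref{cors}. The right inequality is the second assertion just established; the left inequality $n|D|\leq d_{i-1}-d_i+2n|D|$ is equivalent to $d_i-d_{i-1}\leq n|D|$ and does not involve $t$. Here I would argue by cases. If it holds for every $i$, then Corollary \ref{cors} applies verbatim and computes $[\pc_{\underline{n},\underline{\mathcal{D}}}^{\underline{d},\underline{\alpha}_t-ss}]$. If instead $d_i-d_{i-1}>n|D|$ for some $i$, then since the second assertion guarantees the hypothesis of Proposition \ref{p10}(1), every $\underline{\alpha}_t$-semi-stable chain has injective $\phi_i$, forcing the sheaf inclusion $E_i\hookrightarrow E_{i-1}(D)$ of equal-rank bundles and hence $\deg(E_i)\leq\deg(E_{i-1})+n|D|$, i.e. $d_i\leq d_{i-1}+n|D|$, a contradiction. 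Thus the stack is empty, its class vanishes, and this is consistent with the formula of Corollary \ref{cors}, in which the factor $[(C\times\mathbb{P}^{n-1})^{(d_{i-1}-d_i+n|D|)}]$ carries a negative-order symmetric power and is therefore zero.

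I expect the only genuine subtlety, as opposed to the otherwise routine arithmetic, to be this left-hand inequality: it is the single hypothesis of Corollary \ref{cors} that enlarging $t$ cannot control. The point to make carefully is that it either holds outright or else the moduli stack is forced to be empty by the injectivity constraint supplied by Proposition \ref{p10}, so that in both cases the asserted computation via Corollary \ref{cors} is valid.
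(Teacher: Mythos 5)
Your proof is correct. The paper states this lemma without any proof at all, so there is nothing to compare line by line: the first two assertions are exactly the one-line computations you give ($\delta_i-\delta_{i-1}=1$, so each gap equals $(\alpha_i-\alpha_{i-1})+t$, and one takes $t$ beyond the maximum of the finitely many thresholds $(d_{i-1}-d_i+2n|D|)-(\alpha_i-\alpha_{i-1})$). What you add beyond the paper is genuinely valuable: the lemma as stated only secures the right-hand inequality $d_{i-1}-d_i+2n|D|<\alpha_{t,i}-\alpha_{t,i-1}$, whereas Corollary \ref{cors} also requires $n|D|\leq d_{i-1}-d_i+2n|D|$, i.e. $d_i-d_{i-1}\leq n|D|$, a condition on the degrees that no choice of $t$ can enforce. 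Your resolution is exactly the right repair: either this degree condition holds for all $i$ and the corollary applies verbatim, or it fails for some $i$, in which case Proposition \ref{p10}(1) (whose hypothesis is precisely the right-hand inequality you established) forces every $\underline{\alpha}_t$-semi-stable chain to have injective $\phi_i:E_i\to E_{i-1}(D)$ between bundles of equal rank $n$, giving $d_i\leq d_{i-1}+n|D|$, a contradiction; hence the stack is empty and its class is $0$. The only point to phrase more carefully is your closing consistency remark: a symmetric power of negative order is not literally defined, so rather than saying the corollary's formula ``is therefore zero,'' it is cleaner either to adopt explicitly the convention $[X^{(\ell)}]=0$ for $\ell<0$ (under which the first term and, by the same injectivity argument applied to the sub-quotients, every Harder--Narasimhan term vanish), or simply to observe that in this degenerate case the lemma's conclusion holds trivially because the class being computed is $0$.
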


The second step, we can now apply the wall-crossing procedure of above section. The Proposition \ref{pkey} implies that the difference  $[\pc_{\underline n,\mathcal{D}}^{\underline{d},\underline{\alpha}_t-ss}]-[\pc_{\underline n,\mathcal{D}}^{\underline{d},\underline{\alpha}-ss}]$ is given by an alternating sum of classes of finitely many Harder-Narasimhan strata for some stability parameters $\underline{\alpha}_s$ with $0\leq s\leq t$. Since $\underline{\alpha}_s$ satisfies $\alpha_{s,i}-\alpha_{s,i-1}\geq 2g-2$ for all $i$, so the classes of the Harder-Narasimhan strata can be computed by the Proposition \ref{p7}

By induction on the rank $\underline{n}$, this process give us a recursive formula for the class of $\pc_{\underline n,\mathcal{D}}^{\underline{d},\underline{\alpha}-ss}$. 

By using the Corollary \ref{corm} and the Corollary \ref{cor1} for $\underline{\alpha}=(0,2g-2,\dots, r(2g-2))$, we can determine the class $[M^d_{n,\mathcal{D}}]$ with finite calculation.

\begin{cor}
The class $[M^d_{n,\mathcal{D}}]$ can be expressed in terms of $\mathbb{L}$ and the symmetric power $C^{(i)}$ of the curve $C$. 
\end{cor}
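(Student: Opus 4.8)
The plan is to read off the claim from the recursion of this section together with Corollary~\ref{corm}, by checking that the recursion never leaves the subring $R\subset\hat{K_0}(\mathrm{Var})$ generated by $\mathbb{L}$, by the inverses of the polynomials $\prod_{i=0}^{n-1}(\mathbb{L}^n-\mathbb{L}^i)$ (which are invertible, so that in particular $\mathbb{L}^{-1}\in R$), and by the classes $[C^{(i)}]$ of the symmetric powers of $C$. First I would use Corollary~\ref{corm} and Proposition~\ref{p2} to write $[M^d_{n,\mathcal{D}}]=\mathbb{L}^N\sum_i[F_i]$, where each fixed component $F_i$ is the coarse moduli space of $\underline{\alpha}$-stable parabolic chains for $\underline{\alpha}=(0,2g-2,\dots,r(2g-2))$, the sum ranging over the finitely many lengths $r\le n$ and compatible data $(\underline{n},\underline{d},\underline{\mathcal{D}})$ decomposing $(n,d,\mathcal{D})$. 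For generic $\mathcal{D}$ semi-stability and stability of these chains coincide (inherited from the Higgs bundle via Proposition~\ref{p2}), and the stable stack is a $\mathbb{G}_m$-gerbe over $F_i$ whose automorphism groups are the scalars, so $[F_i]=(\mathbb{L}-1)[\pc_{\underline{n},\underline{\mathcal{D}}}^{\underline{d},\underline{\alpha}-ss}]$; thus it suffices to show each stack class $[\pc_{\underline{n},\underline{\mathcal{D}}}^{\underline{d},\underline{\alpha}-ss}]$ lies in $R$.

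Next I would argue by induction on the total rank $|\underline{n}|$ that the recursion of this section expresses $[\pc_{\underline{n},\underline{\mathcal{D}}}^{\underline{d},\underline{\alpha}-ss}]$ as an element of $R$. The recursion is finite: by Corollary~\ref{cor1} only finitely many degrees $\underline{d}$ can occur, and for each wall-crossing Proposition~\ref{pkey} expresses the difference of two semi-stable loci as a finite alternating sum of Harder--Narasimhan strata, whose classes are given by Proposition~\ref{p7} as $\mathbb{L}^{\chi}\prod_i[\pc_{\underline{n}^i,\underline{\mathcal{D}}^i}^{\underline{d}^i,\underline{\alpha}-ss}]$ with each $|\underline{n}^i|<|\underline{n}|$, hence in $R$ by the inductive hypothesis. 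The base cases are either empty strata (when the $n_i$ are not all equal) or, when $\underline{n}=(n,\dots,n)$, the class produced by Corollary~\ref{cors}. Since multiplication by $\mathbb{L}^{\chi}$ and the ring operations $+,-,\times$ preserve $R$, the only remaining point is that the explicit building blocks of Corollary~\ref{cors} belong to $R$.

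These building blocks are the flag factors, the class $[\mathrm{PBundle}_n^{d_0}]$, and the symmetric powers $[(C\times\mathbb{P}^{n-1})^{(\ell)}]$. The flag classes are rational functions in $\mathbb{L}$ with invertible denominators by Lemma~\ref{flag1}, hence in $R$. By the corollary following Lemma~\ref{flag1} together with the Behrend--Dhillon formula recalled above, $[\mathrm{PBundle}_n^{d_0}]$ is a product of flag factors with $[\mathrm{Bundle}_n^{d_0}]$, and the latter is expressed through $\mathbb{L}$, the values $Z(C,\mathbb{L}^{-i})=\sum_{j\ge0}[C^{(j)}]\mathbb{L}^{-ij}$, and $[\mathrm{Pic}^0]$; the Jacobian class itself lies in $R$ because, for $m\ge 2g-1$, the Abel--Jacobi map exhibits $C^{(m)}$ as a Zariski-locally trivial $\mathbb{P}^{m-g}$-bundle over $\mathrm{Pic}^0$, so that $[\mathrm{Pic}^0]=[C^{(m)}]/[\mathbb{P}^{m-g}]$.

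The main obstacle is the last building block, the symmetric powers $[(C\times\mathbb{P}^{n-1})^{(\ell)}]$. Here I would pass to the Kapranov (motivic) zeta function and use its multiplicativity over the cell decomposition $\mathbb{P}^{n-1}=\bigsqcup_{j=0}^{n-1}\mathbb{A}^j$ together with the scaling identity $[\mathrm{Sym}^\ell(X\times\mathbb{A}^1)]=\mathbb{L}^\ell[\mathrm{Sym}^\ell X]$, which yields $Z_{\mathrm{mot}}(C\times\mathbb{P}^{n-1},t)=\prod_{j=0}^{n-1}Z_{\mathrm{mot}}(C,\mathbb{L}^jt)$; extracting the coefficient of $t^\ell$ exhibits $[(C\times\mathbb{P}^{n-1})^{(\ell)}]$ as a polynomial in $\mathbb{L}$ and the $[C^{(i)}]$, hence in $R$. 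Verifying this scaling identity and its compatibility with the completion defining $\hat{K_0}(\mathrm{Var})$ is the one genuinely non-formal point; granting it, the induction closes and $[M^d_{n,\mathcal{D}}]\in R$, which is exactly the assertion.
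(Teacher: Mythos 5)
Your proposal is correct and follows essentially the same route as the paper: localization via Corollary~\ref{corm} and Proposition~\ref{p2} to reduce to stacks of semi-stable parabolic chains, then the Section~5 algorithm (deform $\underline{\alpha}$ to an extremal parameter, compute the base cases by emptiness or Corollary~\ref{cors}, cross walls via Proposition~\ref{pkey} with strata given by Proposition~\ref{p7}, and induct on total rank, with finiteness from Corollary~\ref{cor1}). The only difference is that you make explicit the bookkeeping the paper leaves implicit — the $(\mathbb{L}-1)$ gerbe factor relating stack and coarse space, the expression of $[\mathrm{Pic}^0]$ via the Abel--Jacobi map, and the zeta-function argument for $[(C\times\mathbb{P}^{n-1})^{(\ell)}]$ — all of which are correct and strengthen, rather than alter, the paper's argument.
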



\begin{thebibliography}{9}
\bibitem{bd}K. Behrend, A. Dhillon, \emph{On the motivic class of the stack of bundles}. Adv. in Math. 212 (2007) 617-644.
\bibitem{bb}A. Bialynicki-Birula, \emph{Theorems on actions of algebraic groups}, Ann. of Math. (2) 98 (1973) 480-497.
\bibitem{bgl}E. Bifet, F. Ghione, M. Letizia, \emph{On the Abel-Jacobi map for divisors of higher rank on a curve}, Math. Ann. 299 (1994) 641-672.
\bibitem{bh}H. U. Boden, Y. Hu, \emph{Variations of moduli of parabolic bundles}, Math. Ann. 301 (1995) 539-559.
\bibitem{by}H. U. Boden, K. Yokogawa, \emph{Moduli spaces of parabolic Higgs bundles and parabolic K(D) pairs over smooth curves: I}, Internat. J. Math. 7 (1996) 573-598.
\bibitem{ghs}O. Garc\'ia-Prada, J. Heinloth, A. Schmitt, \emph{On the motives of moduli of chains and Higgs bundles}
\bibitem{ggm}O. Garc\'ia-Prada, P. B. Gothen, V. M\~unoz, \emph{Betti numbers of the moduli space of rank 3 parabolic Higgs bundles}
\bibitem{y}K. Yokogawa, \emph{Compactification of moduli of parabolic sheaves and moduli Higgs sheaves}, J. Math. Kyoto Univ. 33 (1993) 451-504.
\bibitem{y1}K. Yokogawa, \emph{Infinitesimal deformation of parabolic Higgs sheaves}, Internat. J. Math. 6 (1995) 125-148.
\bibitem{s}C. T. Simpson, \emph{Harmonic bundles on noncompact curves}, J. Amer. Math. Soc. 3 (1990) 713-770.
\bibitem{l}S. Langton, \emph{Valuative criteria for families of vector bundles on algebraic varieties}, Ann. of
Math. (2) 101 (1975) 88-110.
\bibitem{ht}T. Hausel, M. Thaddeus, \emph{Mirror symmetry, Langlands duality, and the Hitchin system}, Invent. Math. 153 (2003) (1), 197-229.
\bibitem{th}T. Hausel, \emph{Compactification of moduli of Higgs bundles}, J. Reine Angew. Math. 503 (1998) 169-192.
\bibitem{h}W. H. Hesselink, \emph{Concentration under actions of alegraic groups}, Paul Dubreil and Marie-Paule Malliavin Algebra Seminar, Lecture Notes in Math., 867, Springer Berlin (1981) 55-89.
\bibitem{yh}Y. I. Holla, \emph{Poincaré polynomial of the moduli spaces of parabolic bundles}, Proc. Indian Acad. Sci 110 (2000) 233-261.
\bibitem{gl}G. Laumon \emph{Un analogue global du c\^one nilpotent}, Duke Math. J. 57 (1988) (2) 647-671.
\bibitem{gh}O. Garc\'ia-Prada, J. Heinloth, \emph{The y-genus of the moduli space of $\mathrm{PGL}_n$-Higgs bundles on a curve (for degree coprime to $n$)}, Duke Math. J. 162 (2013) (14), 2731-2749.
\bibitem{gk}P. B. Gothen, A. D. King, \emph{Homological algebra of twisted quiver bundles}, J. London Math. Soc. 71 (2005) 85-99.
\bibitem{t}M. Thaddeus, \emph{Variation of moduli of parabolic Higgs bundles}, J. Reine Angew. Math. 547 (2002), 1-14.
\end{thebibliography}
\end{document}